\newtheorem{theorem}{Theorem}[section]
\newtheorem{corollary}[theorem]{Corollary}
\newtheorem{lemma}[theorem]{Lemma}
\newtheorem{proposition}[theorem]{Proposition}
\theoremstyle{definition}
\newtheorem{definition}[theorem]{Definition}
\newcommand{\be}{\begin{equation}}
\newcommand{\bel}[1]{\begin{equation}\label{#1}}
\newcommand{\ee}{\end{equation}}
\newcommand{\barr}{\begin{eqnarray}}
\newcommand{\earr}{\end{eqnarray}}
\newcommand{\bars}{\begin{eqnarray*}}
\newcommand{\ears}{\end{eqnarray*}}
\newtheorem{subn}{\name}
\newcommand{\bsn}[1]{\def\name{#1}\begin{subn}}
\newcommand{\esn}{\end{subn}}
\newtheorem{sub}{\name}[section]
\newcommand{\bs}{\begin{sub}}
\newcommand{\es}{\end{sub}}
\newcommand{\bsl}[1]{\begin{sub}\label{#1}}
\newcommand{\bth}[1]{\def\name{Theorem}
\begin{sub}\label{t:#1}}
\newcommand{\blemma}[1]{\def\name{Lemma}
\begin{sub}\label{l:#1}}
\newcommand{\bcor}[1]{\def\name{Corollary}
\begin{sub}\label{c:#1}}
\newcommand{\bdef}[1]{\def\name{Definition}
\begin{sub}\label{d:#1}}
\newcommand{\bprop}[1]{\def\name{Proposition}
\begin{sub}\label{p:#1}}
\newcommand{\BA}{\begin{array}}
\newcommand{\EA}{\end{array}}
\newcommand{\BAN}{\renewcommand{\arraystretch}{1.2}
\setlength{\arraycolsep}{2pt}\begin{array}}
\newcommand{\BAV}[2]{\renewcommand{\arraystretch}{#1}
\setlength{\arraycolsep}{#2}\begin{array}}
\newcommand{\BSA}{\begin{subarray}}
\newcommand{\ESA}{\end{subarray}}
\newcommand{\BAL}{\begin{aligned}}
\newcommand{\EAL}{\end{aligned}}
\newcommand{\BALG}{\begin{alignat}}
\newcommand{\EALG}{\end{alignat}}
\newcommand{\BALGN}{\begin{alignat*}}
\newcommand{\EALGN}{\end{alignat*}}
\newcommand{\note}[1]{\noindent\textit{#1.}\hspace{2mm}}
\newcommand{\Proof}{\note{Proof}}
\newcommand{\forevery}{\quad \forall}
\newcommand{\norm}[1]{\left \|#1\right \|}
\def\angb<#1>{\langle #1 \rangle}
\newcommand{\supp}{\opname{supp}}
\newcommand{\prt}{\partial}
\newcommand{\sms}{\setminus}
\newcommand{\tl}{\tilde}
\newcommand{\sbs}{\subset}
\newcommand{\ovl}{\overline}
\newcommand{\unl}{\underline}
\def\ga{\alpha}     \def\gb{\beta}       \def\gg{\gamma}
       \def\gd{\delta}      \def\ge{\epsilon}
\def\gth{\theta}                         \def\vge{\varepsilon}
\def\gf{\phi}       \def\vgf{\varphi}    
      \def\gk{\kappa}      \def\gl{\lambda}
      \def\gw{\omega}
     \def\Gd{\Delta}
\def\Gw{\Omega}              
   \def\CO{{\mathcal O}}   
\def\CA{{\mathcal A}}      
   \def\CE{{\mathcal E}}   \def\CF{{\mathcal F}}
   \def\CK{{\mathcal K}}   \def\CL{{\mathcal L}}
\def\BBD {\mathbb D}
   \def\BBN {\mathbb N}    
   \def\BBR {\mathbb R}
\def\R{\mathbb{R}}
\let\ol=\overline
\def\O{\Omega}
\def\loc{\text{\rm loc}}
\def\supp{\text{\rm supp}}
\newcommand{\su}[2]{\genfrac{}{}{0pt}{}{#1}{#2}}
\numberwithin{equation}{section}
\title[On the generalized principal eigenvalue of Quasilinear Operator]{On the generalized principal eigenvalue of quasilinear operators}
\author{Phuoc-Tai Nguyen}
\address{Departamento de Matem\'atica\\ 
Pontificia Universidad Cat\'olica de Chile, Avda. Vicu\~na Mackenna 4860, Santiago, Chile }
 \email{nguyenphuoctai.hcmup@gmail.com}
\author{Hoang-Hung Vo}
\address{Faculty of Mathematics and Computer Science, University of Science, Ho Chi Minh City National University, No. 227 Nguyen Van Cu Street, Ward 4, District 5, Ho Chi Minh City, Vietnam}
\email{vhhung@hcmus.edu.vn}
\begin{document}

\maketitle

\begin{abstract}
 The notions of generalized principal eigenvalue for linear second order elliptic operators in general domains introduced by Berestycki et al. \cite{BNV,BR0,BR3}  have become a very useful and important tool in analysis of partial differential equations. In this paper, we extend these notions for quasilinear operator of the form
$$\CK_V[u]:=-\Delta_p u +Vu^{p-1},\quad\quad u \geq0.$$
This operator is a natural generalization of self-adjoint linear operators. If $\O$ is a smooth bounded domain, we already proved in \cite{NV} that the generalized principal eigenvalue coincides with the (classical) first eigenvalue of $\CK_V$. Here we investigate the relation between three types of the generalized principal eigenvalue for quasilinear operator on general smooth domain (possibly unbounded), which plays an important role in the investigation of their asymptotic properties. These results form the basis for the study of the simplicity of the generalized principal eigenvalues, the maximum principle and the spectrum of $\CK_V$. We further discuss applications of the notions by providing some examples.
\end{abstract}

\medskip

\textit{ \footnotesize Mathematics Subject Classification (2010)}  {\scriptsize 35J20, 35J62, 35P15, 35P30}.

\textit{ \footnotesize Key words:} {\scriptsize generalized principal eigenvalue, quasilinear elliptic operators, simplicity, maximum principle.}.

\tableofcontents


\section{Introduction and Main Results}

The principal eigenvalue is a basic notion associated with elliptic operators and plays a crucial role in the analysis of partial differential equation, especially in 
the study of semilinear elliptic problems. The principal eigenvalue for quasilinear operators is also the subject of intensive research since not only it is a natural extension of that of linear operators but also it allows to bring into light new phenomena which stem from the interesting structure of quasilinear operators. In this paper, we investigate the  \textit{ generalized principal eigenvalue} of the  operator 
\bel{KV}\CK_V[u]:=-\Delta_p u + V u^{p-1},\quad u \geq 0, \ee
in $\Gw \subset\R^N$ (possibly unbounded), where $\Gd_p u =\text{div} (|\nabla u|^{p-2}\nabla u)$ with $p>1$ and $V \in L_{\loc}^\infty(\Gw)$, $\inf_{\O}V>-\infty$. 

If $\Omega$ is a $C^{1,\nu}$ ($0<\nu<1$) bounded domain and $V \in L^\infty(\Gw)$, it is well-known that  the variational problem
\begin{equation} \label{varprob} \gl_V^{\Gw}:=\inf_{\phi \in W_0^{1,p}(\Gw) \sms \{0\}}\frac{\int_{\Gw}(|\nabla \gf|^p + V|\gf|^p)dx}{\int_{\O}|\phi|^pdx} \end{equation}
admits a unique (up to multiplicative constants) positive minimizer $\vgf$ (see, e.g., \cite{DKN},  \cite[Lemma 3]{GS}). Moreover, $\vgf \in C^{1,\gth}$ ($0<\gth<1$) and it is a positive solution of the quasilinear eigenvalue problem
\bel{eigenprob} \left\{ \BAL \CK_V [\vgf] &= \gl_V^{\Gw}\vgf^{p-1} \qquad &&\text{in } \Gw \\
\phantom{-,,}
\vgf &= 0 &&\text{on } \prt \Gw. 
\EAL \right. \ee
Here $\gl_V^{\Gw}$ and $\vgf$ are called respectively the \emph{principal eigenvalue} and \emph{eigenfunction} of $\CK_V$ in $\Gw$. Note that since $C_c^1(\O)$ is dense in $W_0^{1,p}(\O)$ with respect to $W^{1,p}$ norm, the infimum in (\ref{lambda}) can be taken over $C_c^1(\O)$.

Problem \eqref{eigenprob} has received much attention in the literature because it has various applications, most of which arise from problems in fluid dynamics, where the p-Laplacian operator with $p\neq 2$ is employed to study non-Newtonian fluids ($p > 2$ for dilatant fluids and $p < 2$ for pseudoplastic fluids). This kind of problem has also been used to develop noise reduction and edge detection techniques in image processing (see \cite{CLMC}), where the degenerate diffusion term enables to smoothen the image without destroying the edges.

When $\Gw$ is a general (possibly unbounded) domain, we introduced a notion of generalized principal eigenvalue of $\CK_V$ in $\Gw$ \cite{NV}
\begin{definition}  \label{lambda} (i) The quantity 
\bel{gpe0} \gl(\CK_V,\O):=\sup \{\lambda\in\R |\, \, \exists\psi \in W_{\loc}^{1,p}(\Gw),\psi>0, \CK_V[\psi] \geq \lambda \psi^{p-1} ~ \text{{ \textit{in the weak sense}} in } \Gw \} \ee
is called a {\em generalized principal eigenvalue} of $\CK_V$ in $\Gw$. 
Here, the inequality holds  \textit{in the weak sense in $\O$}  means
\begin{equation}\label{weaksense}
\int_{\O}|\nabla\psi|^{p-2}\nabla\psi\cdot\nabla\phi \, dx+\int_{\O}V\psi^{p-1}\phi\, dx\geq \lambda\int_{\O}\psi^{p-1}\phi \,dx\quad\quad\forall\phi\in C^\infty_c(\O).
\end{equation}
The functions $\psi$ in \eqref{gpe0} are called {\em admissible test functions} for $\gl(\CK_V,\Gw)$. \smallskip

(ii) We say that $\gl \in \BBR$ is an eigenvalue of $\CK_V$ in $\Gw$ if there exists a positive weak solution $u \in W_{loc}^{1,p}(\Gw)$ of
\bel{eigenRN} \CK_V[u]=\gl u^{p-1} \quad \text{in } \Gw. \ee
Such a solution $u$ is called an eigenfunction of $\CK_V$ associated with $\gl$. Denote by $\CE(\O)$ the set of all eigenvalues of $\CK_V$ in $\O$.
\end{definition}
An important feature of the notion of generalized principal eigenvalue is that if $\Gw$ is a smooth bounded domain, $\gl(\CK_V,\Gw)$ coincides with the principal eigenvalue $\gl_V^\Gw$, while if $\Gw$ is unbounded $\gl(\CK_V,\Gw)$ is well defined and can be expressed by a variational formula. 

This type of eigenvalue is of purely mathematical interest since it is an effective tool in the study of many problems. 

Indeed, the role of $\gl(\CK_V,\Gw)$ is clearly described in the analysis of equation 
\bel{nonlinearR} \CK_V[u] + b \,g(u)=0 \quad \text{in } \Gw \ee
where $0 \leq b \in L^\infty(\Gw)$ and $t \mapsto g(t)/t^{p-1}$ is increasing. We refer the reader to \cite{CDG, DuGu} for the case when $\Gw$ is bounded and to \cite{NV} for the case $\Gw=\BBR^N$. 
In particular, in \cite{NV}, under the assumption on the asymptotic behavior of $V$ near infinity
$$ \liminf_{|x| \to \infty}|x|^q V(x)>0 \quad \text{for some } q \in [0,p],\, p>1, $$
we have proved that:
\begin{theorem} \label{JFA} (See \cite[Theorem 1.3 and Theorem 1.44]{NV})
	
\noindent {\sc I. Existence and Uniqueness.} If $\gl(\CK_V,\Gw)<0$ then there exists a unique decaying solution of \eqref{nonlinearR}. Moreover, 

(i)\, If $q \in [0,p)$ then the unique solution decays exponentially

(ii)\, If $q=p$ then the solution decays polynomially. \smallskip

\noindent {\sc II. Nonexistence.} If $\gl(\CK_V,\Gw) \geq 0$ then there exists no decaying solution of \eqref{nonlinearR}. 
\end{theorem}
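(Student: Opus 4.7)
The strategy has four ingredients: a sub/super-solution argument on an exhausting sequence of bounded domains (for existence), explicit radial super-solutions calibrated against the tail of $V$ (for the sharp decay rates), the Picone--Allegretto--Huang identity (for uniqueness), and the variational characterisation of $\lambda(\CK_V,\Gw)$ combined with an energy identity (for nonexistence).

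\emph{Part I, existence with decay.} Because $\lambda(\CK_V,\Gw)<0$, pick $\lambda_0<0$ and a positive admissible $\psi\in W^{1,p}_{\loc}(\Gw)$ with $\CK_V[\psi]\ge\lambda_0\psi^{p-1}$. Then $\underline u:=\varepsilon\psi$ is a weak sub-solution of $\CK_V[u]+bg(u)=0$ for $\varepsilon>0$ sufficiently small: the inequality reduces to $\|b\|_\infty g(\varepsilon\psi)/(\varepsilon\psi)^{p-1}\le -\lambda_0$, and it holds under the standard super-$p$-linearity $g(t)/t^{p-1}\to 0$ as $t\to 0^+$. For the super-solution I would take $\overline u(x)=C\,\mathrm{e}^{-\gamma|x|^{1-q/p}}$ when $q\in[0,p)$ and $\overline u(x)=C|x|^{-\alpha}$ when $q=p$. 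A direct radial computation gives $\Delta_p\overline u/\overline u^{p-1}\sim (p-1)(\gamma\beta)^p|x|^{(\beta-1)p}$ with $\beta=1-q/p$ in the first case and an analogous algebraic rate $\sim|x|^{-p}$ in the second; the hypothesis $\liminf_{|x|\to\infty}|x|^q V(x)>0$ then produces $\CK_V[\overline u]+bg(\overline u)\ge 0$ outside a large ball. The exponent $\beta$ is forced by the matching $(\beta-1)p=-q$, which is exactly why exponential decay is inadmissible when $q=p$ (it would require $\beta\le 0$) and polynomial decay must be used. Using $g(C\overline u)\ge C^{p-1}g(\overline u)$ for $C\ge 1$, multiplying $\overline u$ by a large constant keeps it a super-solution, so one may arrange $\overline u\ge\underline u$ globally. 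Solving the Dirichlet problem on each $\Gw_n\nearrow\Gw$ with data $\overline u$ by monotone iteration between $\underline u$ and $\overline u$, local $C^{1,\theta}$ regularity (DiBenedetto/Tolksdorf) and a diagonal extraction deliver a weak solution $u\in W^{1,p}_{\loc}(\Gw)$ with $\underline u\le u\le\overline u$, inheriting the decay of $\overline u$.

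\emph{Part I, uniqueness, and Part II, nonexistence.} For uniqueness, let $u_1,u_2$ be two positive decaying solutions and $\xi_R\in C_c^\infty(\Gw)$ a cut-off equal to $1$ on $B_R$. Testing the equations for $u_i$ against $\xi_R^p(u_1^p-u_2^p)/u_i^{p-1}$, adding the two resulting identities, and invoking the Picone inequality to drop the non-negative gradient remainder leaves
\[
\int_{\Gw}\xi_R^p\,b\!\left(\frac{g(u_1)}{u_1^{p-1}}-\frac{g(u_2)}{u_2^{p-1}}\right)(u_1^p-u_2^p)\,dx \;\le\; E_R,
\]
where $E_R$ is a cut-off error controlled by the decay of the $u_i$. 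The left-hand side has a definite sign by the strict monotonicity of $g(t)/t^{p-1}$, and letting $R\to\infty$ forces $u_1\equiv u_2$. For nonexistence, assume a positive decaying solution $u$ exists with $\lambda(\CK_V,\Gw)\ge 0$. Multiplying $\CK_V[u]+bg(u)=0$ by $\xi_R^p u$ and integrating by parts,
\[
\int_{\Gw}\xi_R^p\big(|\nabla u|^p+Vu^p\big)\,dx+\int_{\Gw}\xi_R^p\,bg(u)u\,dx = -p\int_{\Gw}\xi_R^{p-1}u\,|\nabla u|^{p-2}\nabla u\cdot\nabla\xi_R\,dx.
\]
The right-hand side vanishes as $R\to\infty$ by the decay of $u$ and $\nabla u$; the variational characterisation $\lambda(\CK_V,\Gw)=\inf_{\phi\in C_c^\infty(\Gw)\setminus\{0\}}\int(|\nabla\phi|^p+V|\phi|^p)/\int|\phi|^p$, applied to $\phi=\xi_R u$, lower-bounds the first integral (modulo errors in $\nabla\xi_R$ that also vanish in the limit) by $\lambda(\CK_V,\Gw)\int\xi_R^p u^p\,dx\ge 0$. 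In the limit, $0\le-\int bg(u)u\,dx$, contradicting $b\not\equiv 0$ together with $g>0$ on $(0,\infty)$.

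\emph{Main obstacle.} Throughout the uniqueness and nonexistence steps, the delicate point is the control of the cut-off errors --- integrals like $\int\xi_R^{p-1}u|\nabla u|^{p-1}|\nabla\xi_R|\,dx$ concentrate on an annulus at scale $R$, and their vanishing relies crucially on the decay rates of $u$ and $|\nabla u|$ from the existence step. This is precisely why the dichotomy $q<p$ versus $q=p$ drives the argument: the exponents $\beta=1-q/p$ and $\alpha$ are exactly what guarantee the integrability at infinity needed to close both the Picone identity and the energy identity.
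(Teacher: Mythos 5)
This theorem is not proved in the present paper: it is quoted verbatim from \cite[Theorems 1.3 and 1.4]{NV} as background, so there is no in-text proof to compare yours against. Your outline is nevertheless the expected one and matches the strategy of \cite{NV} in spirit: sub/supersolutions on an exhaustion for existence, explicit radial barriers calibrated to $\liminf |x|^q V>0$ for the decay rates (your exponent bookkeeping $(\beta-1)p=-q$, $\beta=1-q/p$, is correct and correctly explains the $q<p$ versus $q=p$ dichotomy), a Picone/D\'iaz--Saa convexity argument for uniqueness, and the variational formula \eqref{lamV} for nonexistence.

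Two points need attention before this would close. First, your subsolution $\underline u=\varepsilon\psi$ requires $g(t)/t^{p-1}\to 0$ as $t\to 0^+$; the hypotheses recorded in this paper only say $t\mapsto g(t)/t^{p-1}$ is increasing, so you are silently importing an extra assumption on $g$ (it is part of the hypotheses of \cite{NV}, but you should state it). Second, and more seriously, your uniqueness and nonexistence arguments quantify over \emph{all} positive decaying solutions, yet the quantitative decay of $u$ and $|\nabla u|$ that you use to kill the cut-off errors $\int \xi_R^{p-1}u|\nabla u|^{p-1}|\nabla\xi_R|\,dx$ is only established for the solution you \emph{construct} between the barriers. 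A ``decaying solution'' in the sense of the statement merely satisfies $u(x)\to 0$ at infinity, which is not enough to make those annulus integrals vanish, nor to place $u$ in $W^{1,p}(\Gw)$ so that $\xi_R u$ is a legitimate competitor in \eqref{lamV}. The missing step is an a priori estimate showing that \emph{every} decaying positive (super)solution in fact decays at the barrier rate (this is what Proposition \ref{class1} of the present paper does in a related setting: it upgrades a mild growth bound to exponential decay by a comparison on annuli $B_{R+\rho}\setminus B_R$). With such a self-improvement lemma in hand, together with local gradient bounds from Harnack and $C^{1,\theta}$ regularity, your cut-off errors do vanish and both the Picone identity and the energy identity close as you describe.
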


\noindent We emphasize that when $p \geq 2$, the existence, uniqueness and nonexistence results hold in a much larger class of functions, including bounded functions. For  more details, we refer the reader to \cite{NV}.

It is noteworthy that the notion of generalized principal eigenvalue in Definition \ref{lambda} is closely related to \textit{the best constant in the Hardy-type inequality} which was introduced by Pinchover et al. to establish optimal Hardy-type inequalities (see  \cite{DFP,DePi}). This notion is also used to study the structure of positive solution homogeneous equation $\CK_V[u]=0$ in unbounded domains (see, e.g., \cite{FO,Murata, Pinchover}). Moreover, it is directly related to the characterization of the Liouville type result and the maximum principle \cite{Pinchover1, BDPR, GS, BNV,QS,BR3}. Therefore, the investigation of the principal eigenvalue is a crucial ingredient to deal with many fundamental questions in the theory of partial differential equations. 

The aim of the present paper is to bring the eigentheory for quasilinear
operators closer to the level of the well-studied linear case  (see \cite{BNV,BR0,BR3}) by establishing qualitative properties, the simplicity, the spectrum of  $\CK_V$ and the maximum principle.  To this end, in the spirit of the papers \cite{BNV,BR0,BR3}, we introduce  other notions of the generalized principal eigenvalue as follows
\begin{definition}\label{gpe} Let 
 $\Gw$ be a (possibly unbounded) domain in $\R^N$. Define
$$ \BAL
&\gl'(\CK_V,\Gw):=\inf \{\lambda\in\R |\, \, \exists\psi \in W_{0}^{1,p}(\Gw),\psi>0, \CK_V[\psi] \leq \lambda \psi^{p-1} ~ \text{{ \textit{in the weak sense}} in } \Gw\},\\
 &\gl''(\CK_V,\Gw):=\sup \{\lambda\in\R |\, \, \exists\psi \in C_{\loc}^{1}(\Gw), \inf_{\O}\psi>0, \CK_V[\psi] \geq \lambda \psi^{p-1} ~ \text{{ \textit{in the weak sense}} in } \Gw \},
\EAL $$
 where the inequalities are understood in the weak sense in $\O$ as in \eqref{weaksense}. The functions $\psi$ in \eqref{gpe0} are called {\em admissible test functions}.
\end{definition}

Note that since $\inf_{\Gw}V>-\infty$, it follows that $\gl(\CK_V,\Gw)>-\infty$ and $\gl''(\CK_V,\Gw)>-\infty$. It may occur that the set of admissible test functions in the definition of $\gl'(\CK_V,\Gw)$ is empty. In such a case, we set $\gl'(\CK_V,\Gw)=+\infty$.

The main difference between our notions of the \textit{generalized  principal eigenvalues} and those introduced by Berestycki et al. \cite{BNV,BR0,BR3} is that in Definitions \ref{lambda} and \ref{gpe} admissible test functions $\psi$ are only required to be in $W^{1,p}$ or in $C_{loc}^1$ and all the inequalities are only required to hold  \textit{in the weak sense in} $\Gw$ while admissible test functions in the definitions of the generalized principal eigenvalue in  \cite{BNV,BR0,BR3} belong to $W^{2,N}_{loc}$ and the inequalities are understood \textit{almost everywhere in} $\Gw$. Moreover, our definition of $\lambda'(\CK_V,\Gw)$  differs from the one defined by Berestycki and Rossi for second order linear operators in \cite{BR3} in the sense that we impose  admissible test functions $\psi$ to be in $W^{1,p}_0(\O)$ instead of requiring them to be in $W_{loc}^{2,N}(\Gw)$ and to satisfy  $\lim_{x\to\zeta}\psi(x)=0$ for every $\zeta \in \prt \Gw$. Despite these differences, our notions fit well into the framework of quasilinear operators and allow to obtain the properties of the generalized principal eigenvalue. We emphasize that, many new ideas have been developed in this paper to overcome the fundamental difficulties stemming from the nonlinearity of p-Laplacian since most of the techniques used in  \cite{BNV,BR0,BR3} fail to apply in this framework, especially to obtain the relation of the notions of the generalized principal eigenvalue, the simplicity and the maximum principle.

Observe, by Definitions \ref{lambda} and \ref{gpe}, that $\gl''(\CK_V,\Gw) \leq \gl(\CK_V,\Gw)$. 
Our first result reveals a more profound relation between three notions of generalized principal eigenvalues.
 \begin{theorem}\label{equivalence} Let $\O\subset\R^N$. Assume $0 \not \equiv V \in L^\infty(\Gw)$. 
 
\noindent  (i)\, If $\O$ is a smooth bounded domain then
$$\gl(\CK_V,\Gw)=\gl'(\CK_V,\Gw) =\gl''(\CK_V,\Gw).$$

\noindent (ii)\, If $\O=\R^N$ then
$$\gl(\CK_V,\Gw) = \gl'(\CK_V,\Gw) \geq \gl''(\CK_V,\Gw).$$
 
\noindent (iii)\, If $\O=\R^N$ and $V$ is radially symmetric then
 $$\gl(\CK_V,\Gw) = \gl'(\CK_V,\Gw) =\gl''(\CK_V,\Gw).$$ 
 \end{theorem}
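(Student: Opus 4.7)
The plan is to prove the three claims by pairwise comparisons, noting first that $\lambda''(\CK_V,\Omega) \leq \lambda(\CK_V,\Omega)$ is immediate: any admissible test function for $\lambda''$ (being in $C^1_{loc}$ with $\inf_\Omega > 0$) is automatically admissible for $\lambda$. Hence the substance lies in proving $\lambda = \lambda'$ in all three parts and $\lambda \leq \lambda''$ in parts (i) and (iii).

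For part (i), recall from \cite{NV} that $\lambda(\CK_V,\Omega) = \lambda_V^\Omega$ with positive eigenfunction $\varphi \in W^{1,p}_0(\Omega) \cap C^{1,\theta}(\overline{\Omega})$. Using $\varphi$ as admissible test function for $\lambda'$ yields $\lambda' \leq \lambda$. Conversely, given any admissible $\psi \in W^{1,p}_0(\Omega)$ with $\CK_V[\psi] \leq \mu \psi^{p-1}$, approximate $\psi$ by $\phi_k \in C^\infty_c(\Omega)$ and pass to the limit in the weak inequality tested with $\phi_k$; one obtains $\int(|\nabla\psi|^p + V|\psi|^p)\,dx \leq \mu \int |\psi|^p\,dx$, so the Rayleigh-quotient characterization of $\lambda_V^\Omega$ forces $\mu \geq \lambda_V^\Omega$, giving $\lambda' \geq \lambda$. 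For $\lambda'' \geq \lambda$, exhaust by smooth bounded $\Omega_n \Supset \overline{\Omega}$ with $\Omega_n \searrow \Omega$; extending $V$ boundedly to $\Omega_n$, let $\varphi_n$ be the principal eigenfunction of $\CK_V$ on $\Omega_n$ with eigenvalue $\mu_n$. Since $\overline{\Omega} \Subset \Omega_n$, the restriction $\varphi_n|_\Omega \in C^1(\overline{\Omega})$ satisfies $\inf_\Omega \varphi_n > 0$ and is admissible for $\lambda''(\CK_V,\Omega)$, yielding $\lambda'' \geq \mu_n \to \lambda_V^\Omega$ by continuity of the principal eigenvalue under outer domain perturbation.

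For part (ii), since $W^{1,p}_0(\R^N) = W^{1,p}(\R^N)$, the density argument of (i) carries over verbatim to give $\lambda \leq \lambda'$, using the variational characterization $\lambda(\CK_V,\R^N) = \lim_{R\to\infty} \lambda(\CK_V, B_R) = \inf_{\phi \in C^\infty_c \setminus \{0\}} \int(|\nabla\phi|^p + V|\phi|^p)\,dx / \int |\phi|^p\,dx$, derived from (i) on balls and domain monotonicity. The reverse $\lambda' \leq \lambda$ requires constructing, for each $\epsilon > 0$, a strictly positive $\psi_\epsilon \in W^{1,p}(\R^N)$ with $\CK_V[\psi_\epsilon] \leq (\lambda + \epsilon)\psi_\epsilon^{p-1}$. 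The natural strategy is to take $\psi_\epsilon = \max(\tilde\varphi_R, \delta \eta)$, where $\tilde\varphi_R$ is the principal eigenfunction on a large ball $B_R$ (with $\lambda_R \to \lambda$) extended by zero, and $\eta$ is a small positive $W^{1,p}(\R^N)$ supersolution with eigenvalue bound close to $\lambda$ (e.g., a decaying exponential); that the $\max$ of two weak subsolutions remains a weak subsolution follows from a standard gluing argument for $-\Delta_p$. This step is the main technical obstacle, since the nonlinearity of $-\Delta_p$ forbids direct additive superposition of subsolutions.

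For part (iii) with radial $V$, only $\lambda'' \geq \lambda$ remains. Radial symmetry reduces $\CK_V[u] = \lambda u^{p-1}$ to the singular ODE $-(r^{N-1}|u'|^{p-2}u')' + r^{N-1}V u^{p-1} = \lambda r^{N-1} u^{p-1}$. For each $\lambda_n < \lambda$, we construct a radial $\psi_n \in C^1_{loc}(\R^N)$ with $\inf \psi_n > 0$ and $\CK_V[\psi_n] \geq \lambda_n \psi_n^{p-1}$: solve the radial Neumann eigenvalue problem on $B_R$ (whose principal eigenvalue $\mu_R \searrow \lambda$) to get a radial positive eigenfunction $\varphi_R$ with $\varphi_R'(R) = 0$, and extend it by the constant value $\varphi_R(R) > 0$ for $|x| \geq R$; the Neumann condition ensures $C^1$ matching across $\partial B_R$, and in the exterior $\CK_V[\psi_n] = V \psi_n^{p-1} \geq \lambda_n \psi_n^{p-1}$ provided $V(x) \geq \lambda_n$ for $|x| \geq R$, which is ensured for $R$ large using the spectral-type bound $\lambda \leq \liminf_{|x| \to \infty} V(x)$. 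Such $\psi_n$ is admissible for $\lambda''(\CK_V, \R^N)$, so $\lambda'' \geq \lambda_n \to \lambda$.
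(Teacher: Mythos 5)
Your parts (i) and (ii) are essentially sound. In (i) the direction $\gl'\le\gl$ via the Dirichlet eigenfunction is simpler than the paper's construction, and the outer-exhaustion argument for $\gl''\ge\gl$ is exactly the paper's (though you assert the convergence $\mu_n\to\gl_V^\Gw$ by ``continuity under outer perturbation''; the paper actually has to prove this by a compactness/Harnack argument, and you should too). In (ii), your route to $\gl'\le\gl$ via $\max(\tl\vgf_R,\gd\eta)$ is genuinely different from the paper's, which instead builds a positive exponentially decaying $W_0^{1,p}$ subsolution by solving an auxiliary absorption problem $-\Gd_p u-a_{R_0,m}u^{p-1}+V^+u^p=0$ with sub/supersolutions; your sketch is plausible provided you (a) modify $\eta$ so that the singular term $(N-1)\gth^{p-1}/|x|$ near the origin is harmless (e.g.\ arrange $\gd\eta<\tl\vgf_R$ there and use locality of the subsolution property) and (b) actually prove the max-of-subsolutions lemma for $-\Gd_p+c(x)(\cdot)^{p-1}$ with sign-changing $c$. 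You correctly identify this as the crux, but it is left unproved.

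Part (iii) contains a genuine gap. Your construction rests on the inequality $\gl(\CK_V,\R^N)\le\liminf_{|x|\to\infty}V(x)$, which is false in general, even for radial bounded $V$. Take $V$ radial with $V=0$ on the annuli $A_k=\{2^k\le|x|\le2^k+1\}$ and $V=1$ elsewhere: then $\liminf_{|x|\to\infty}V=0$, but a Poincar\'e-type estimate on each fixed-width annulus (transferring $L^p$ mass of a test function from $A_k$ to the adjacent region where $V=1$, at the cost of a uniformly bounded multiple of the gradient term) shows that the Rayleigh quotient is bounded below by a positive constant, so $\gl(\CK_V,\R^N)>0=\liminf_{|x|\to\infty}V$. (What is true, and what the paper proves in Theorem \ref{lim1}(iii), is only $\gl\le\limsup_{|x|\to\infty}V$.) Consequently, for $\gl_n$ close to $\gl$ the exterior region $B_R^c$ may contain points with $V<\gl_n$ no matter how large $R$ is, and your constant extension fails to satisfy $\CK_V[\psi_n]\ge\gl_n\psi_n^{p-1}$ there. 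A secondary unjustified step is the claim that the Neumann principal eigenvalues satisfy $\mu_R\searrow\gl$: the inequality $\limsup_R\mu_R\le\gl$ is easy, but the lower bound $\mu_R\ge\gl$, which you need for admissibility inside $B_R$, is not. The paper's proof of (iii) avoids both issues entirely: after normalizing $\gl=0<\gl(\CK_V,\R^N)$, it constructs a positive \emph{radial} solution $\vgf$ of $\CK_V[\vgf]=0$ in all of $\R^N$ (as a limit of solutions of Dirichlet problems $\CK_V[w_n]=f_n$ with radial data), and then uses the Caccioppoli-type estimate of Lemma \ref{cutoff} to show that $\int_{B_n\sms B_{n-1}}\vgf^p$ grows geometrically, whence by radial symmetry and Harnack $\inf_{\R^N}\vgf>0$; this $\vgf$ is then admissible for $\gl''$. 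The key idea you are missing is that positivity of $\gl(\CK_V,\R^N)$ forces any global positive solution of the homogeneous equation to grow, which is what yields $\inf\psi>0$ without any pointwise lower bound on $V$ at infinity.
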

The following result describes the effect of the diffusion coefficient on the generalized principal eigenvalue.
 
 \begin{theorem} \label{lim1} Let $\Gw$ is a (possibly unbounded) domain in $\BBR^N$ and $V \in L_{loc}^\infty(\Gw)$. For $\ga>0$, denote
 	\bel{La} \CL_\alpha[\phi]:=-\alpha\Delta_p\phi+V\phi^{p-1},\quad\phi\geq 0.\ee
 	Then the following properties hold.
 	
\noindent (i)\,  The mapping $\ga \mapsto \gl(\CL_\ga,\Gw)$ is concave, nondecreasing and
 	$$\lim_{\alpha\to0}\lambda(\CL_\alpha,\Gw)=\inf_\Gw V.$$
 	
\noindent (ii)\, If $\Gw$ is a smooth bounded domain then
 	$$\lim_{\alpha\to+\infty}\lambda(\CL_\alpha,\Gw)=\infty.$$
 	
\noindent (iii)\, If $\Gw=\R^N$  then

 	\bel{limsup} \limsup_{\alpha\to+\infty}\lambda(\CL_\alpha,\BBR^N)\leq\limsup_{{|x|\to+\infty}} V(x), \ee
 	
 	\bel{liminf}
 	\liminf_{\alpha\to+\infty}\lambda(\CL_\alpha,\BBR^N)\geq\liminf_{{|x|\to+\infty}} V(x).
 	\ee
 \end{theorem}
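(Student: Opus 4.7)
The plan is to exploit the exhaustion of $\Omega$ by smooth bounded subdomains $\Omega_n \nearrow \Omega$ and reduce every statement to estimates on the classical Rayleigh quotient
\[
R_\alpha(\phi) := \frac{\alpha \int_{\Omega} |\nabla \phi|^p\,dx + \int_{\Omega} V |\phi|^p\,dx}{\int_{\Omega} |\phi|^p\,dx},\qquad \phi \in C_c^\infty(\Omega)\setminus\{0\}.
\]
The identity $\lambda(\CL_\alpha,\Omega_n)=\inf_\phi R_\alpha(\phi)$ on smooth bounded domains (this is the classical Dirichlet principal eigenvalue, recovered also as the generalized one by Theorem~\ref{equivalence}(i)), together with the domain monotonicity of $\lambda(\CK_V,\cdot)$ (immediate from its supremum definition), yields $\lambda(\CL_\alpha,\Omega)=\lim_n \lambda(\CL_\alpha,\Omega_n) = \inf_{\phi\in C_c^\infty(\Omega)\setminus\{0\}} R_\alpha(\phi)$.

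For (i), $\alpha\mapsto R_\alpha(\phi)$ is affine in $\alpha$ with nonnegative slope $\int|\nabla\phi|^p/\int|\phi|^p\geq 0$ for every fixed $\phi$, so $\lambda(\CL_\alpha,\Omega_n)$ is an infimum of a family of nondecreasing affine functions, hence nondecreasing and concave in $\alpha$; the pointwise infimum over $n$ preserves both properties. For $\lim_{\alpha \to 0}$, the constant $\psi\equiv 1$ is admissible in \eqref{gpe0} whenever $\lambda\leq \inf_\Omega V$, giving the uniform lower bound $\lambda(\CL_\alpha,\Omega) \geq \inf_\Omega V$; the matching upper bound follows from $R_\alpha(\phi_r) = C_0\alpha r^{-p} + V(x_0) + o_r(1)$ for a smooth bump $\phi_r$ of radius $r$ centered at a Lebesgue point $x_0$ of $V$ with $V(x_0)\leq \inf_\Omega V + \epsilon$, by first fixing $r$ small and then letting $\alpha \to 0$. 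Part (ii) is immediate from $R_\alpha(\phi)\geq \alpha\lambda_1^p(\Omega)+\inf_\Omega V$, where $\lambda_1^p(\Omega)>0$ is the first Dirichlet eigenvalue of $-\Delta_p$ on the bounded domain $\Omega$. For the limsup in (iii), choose $R$ so that $V\leq \limsup_{|x|\to +\infty}V(x)+\epsilon$ on $\R^N\setminus B_R$ and test with the first Dirichlet eigenfunction of $-\Delta_p$ on the annulus $A_n := B_n\setminus\overline{B_R}$; this gives $\lambda(\CL_\alpha,\R^N)\leq \alpha\lambda_1^p(A_n)+\limsup_{|x|\to +\infty}V(x)+\epsilon$, and $\lambda_1^p(A_n)\to 0$ as $n\to\infty$ since $A_n$ eventually contains balls of arbitrarily large radius.

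The liminf in (iii) is the main technical obstacle. Writing $\ell := \liminf_{|x|\to+\infty} V(x)$, choosing $R$ so that $V\geq \ell -\epsilon$ on $\R^N\setminus B_R$, and setting $M := (\ell - \epsilon - \inf_{B_R}V)_+$, a pointwise decomposition of $\int V|\phi|^p$ gives
\[
R_\alpha(\phi) \geq \ell - \epsilon + \alpha \frac{\int|\nabla\phi|^p}{\int|\phi|^p} - M \frac{\int_{B_R}|\phi|^p}{\int|\phi|^p},\qquad \phi \in C_c^\infty(\R^N).
\]
The strategy is to absorb the last term into the gradient term through a uniform localized Sobolev-type inequality
\[
\int_{B_R}|\phi|^p \leq C(N,p)\,R^p \int_{\R^N} |\nabla \phi|^p,\qquad \phi\in C_c^\infty(\R^N),
\]
which in the subcritical regime $N>p$ follows from the Sobolev embedding $W^{1,p}(\R^N)\hookrightarrow L^{p^*}(\R^N)$ and H\"older's inequality on $B_R$. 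This forces $R_\alpha(\phi)\geq \ell - \epsilon$ uniformly in $\phi$ as soon as $\alpha\geq M C(N,p) R^p$, so $\liminf_{\alpha\to\infty}\lambda(\CL_\alpha,\R^N)\geq \ell - \epsilon$, and sending $\epsilon\to 0$ concludes. The hard part is the critical/supercritical range $N\leq p$, where the displayed Sobolev inequality fails uniformly on $C_c^\infty(\R^N)$; there one has to replace it by a more delicate argument, for instance a dyadic cutoff decomposition on annuli that also exploits the strict positive margin $V-\ell+\epsilon\geq \epsilon$ available outside $B_R$.
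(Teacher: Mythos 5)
Your treatment of (i), (ii) and the limsup in (iii) is sound and close in spirit to the paper's. For (i) the paper obtains $\lim_{\alpha\to0}\lambda(\CL_\alpha,\Omega)=\inf_\Omega V$ indirectly, by writing $\CL_\alpha=\alpha\,\CK_{V/\alpha}$ and invoking Theorem \ref{lim2}(ii); your direct bump-function computation at a near-minimizing Lebesgue point is a legitimate (and more self-contained) alternative. Part (ii) is the same Poincar\'e argument, and your annulus test functions for \eqref{limsup} reproduce the paper's use of the fact that the first Dirichlet eigenvalue of $-\Delta_p$ on an exterior domain is zero because it contains arbitrarily large balls.

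The genuine gap is exactly where you flag it: the liminf \eqref{liminf} in the range $p\geq N$, which the theorem covers and which you leave as ``a more delicate argument.'' This is not a routine completion. The absorption inequality you would need,
\[
\alpha\int_{\R^N}|\nabla\phi|^p\,dx+\epsilon\int_{B_R^c}|\phi|^p\,dx\;\geq\;M\int_{B_R}|\phi|^p\,dx\qquad\text{for all }\phi\in C_c^\infty(\R^N)\text{ and all large }\alpha,
\]
is precisely the statement $\lambda\bigl(-\alpha\Delta_p+W,\R^N\bigr)\geq0$ for the piecewise constant potential $W=\epsilon\chi_{B_R^c}-M\chi_{B_R}$, i.e.\ a special case of \eqref{liminf} itself; when $p\geq N$ the ball $B_R$ has zero $p$-capacity in $\R^N$, so the gradient term alone cannot control $\int_{B_R}|\phi|^p$ and the interplay with the exterior mass term is unavoidable. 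Your proposed route is therefore circular in the hard case. (A smaller slip: choosing $R$ with $V\geq\ell-\epsilon$ on $B_R^c$ gives margin $V-(\ell-\epsilon)\geq0$ there, not $\geq\epsilon$; you must aim at $\ell-2\epsilon$ to get a strict margin.) The paper avoids all of this by working with the supremum definition \eqref{gpe0} rather than the Rayleigh quotient: it exhibits the explicit radial function $\tilde\psi(x)=(e^{\beta|x|+1}+e^{-\beta|x|-1})^{-\gamma}$ with $\beta=\alpha^{-1/(2p)}$, $\gamma=\alpha^{-1/(2p-1)}$, for which $\alpha\beta^p\gamma^p\to0$ makes $-\alpha\Delta_p\tilde\psi+V\tilde\psi^{p-1}\geq(\liminf_{|x|\to\infty}V-2\epsilon)\tilde\psi^{p-1}$ outside a fixed large ball, while $\alpha\beta^p\gamma^{p-1}\to\infty$ makes $-\alpha\Delta_p\tilde\psi$ dominate any bounded potential on the compact core; this works uniformly for all $p>1$ and all $N$, and is the ingredient your argument is missing.
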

 
Theorem  \eqref{lim1} yields different interesting phenomena  of the effect of diffusion coefficient on the generalized principal eigenvalue between bounded and unbounded domains. From (i) and (ii) and Theorem \ref{JFA}, one sees that if $\O$ is bounded then for any bounded potential $V$ with negative infimum, equation \eqref{nonlinearR}  admits a positive solution for $\alpha$ small and does not admit any positive solution for $\alpha$ large. The phenomenon is strikingly different when $\O=\R^N$. Assume there exists $\lim_{|x|\to\infty}V(x)=\ell$. From \eqref{limsup}) and \eqref{liminf}, if $\ell<0$, then
$$\lim_{\alpha\to+\infty}\lambda(\CL_\alpha,\BBR^N)=\ell<0,$$
and  Theorem \ref{JFA} implies that equation (\ref{nonlinearR}) admits a unique  positive solution for $\alpha$ large while if $\ell>0$ equation (\ref{nonlinearR}) admits no positive solution for $\alpha$ near $+\infty$.

Thanks to Theorems \ref{equivalence} and \ref{lim1}, we obtain a result  about the effect of the potential on the generalized principal eigenvalues.

If $\{V_\ga\}$ is a sequence of functions in $L_{loc}^\infty(\Gw)$ we denote
\bel{Ka} \CK_\ga[\phi]:=-\Delta_p\phi+V_\ga\,\phi^{p-1}, \quad \gf \geq 0. \ee

\begin{theorem}\label{lim2} 
Let $\Gw$ be a (possibly unbounded) domain and $V \in L_{loc}^\infty(\Gw)$.

\noindent (i)\,  Denote $V_\alpha(x)=V(\alpha x)$ for $\ga>0$. If $\Gw=\BBR^N$ and $V(0)=\inf_{\R^N}V$ then 
\bel{lubu} \lim_{\alpha\to 0}\lambda(\CK_\alpha,\BBR^N)=\inf_{\BBR^N}V \ee
and if, in addition,  $\sup_{\R^N}V=\liminf_{|x|\to\infty}V(x)$ then
\bel{lubu1} \lim_{\alpha\to +\infty}\lambda(\CK_\alpha,\BBR^N)=\sup_{\R^N}V. \ee

\noindent (ii)\, Denote  $V_\alpha(x)=\alpha V(x)$ for $\ga>0$. If $V$ is upper semi-continuous then
\bel{upper} \lim_{\alpha\to+\infty}\frac{\lambda(\CK_\alpha,\Gw)}{\alpha}=\inf_{\O}V.\ee

\noindent (iii)\, Denote  $V_\alpha(x)=-\alpha V(x)$ for $\ga>0$. If $V$ is lower semi-continuous then
\bel{lower} \lim_{\alpha\to+\infty}\frac{\lambda(\CK_\alpha,\Gw)}{\alpha}=\sup_{\O}V.\ee
\end{theorem}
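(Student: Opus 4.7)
My plan is to handle (ii) and (iii) first---they rely on the variational characterisation of $\gl$ on bounded subdomains given by Theorem~\ref{equivalence}(i)---and then to derive (i) via a scaling identity that reduces it to Theorem~\ref{lim1}.

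For (ii), the bound $\gl(\CK_\alpha,\Gw)\geq\alpha\inf_\Gw V$ is immediate by taking $\psi\equiv 1\in W^{1,p}_{\loc}(\Gw)$ in the definition of $\gl$, since $\CK_\alpha[1]=\alpha V\geq\alpha\inf_\Gw V$. For the reverse inequality, fix $\e>0$; upper semi-continuity of $V$ ensures that $\{V<\inf_\Gw V+\e\}$ is nonempty and open, hence contains a ball $B\Subset\Gw$. Domain monotonicity of $\gl$ (immediate from the definition, since any admissible test function on $\Gw$ restricts to an admissible one on $B$) yields $\gl(\CK_\alpha,\Gw)\leq\gl(\CK_\alpha,B)$, and by Theorem~\ref{equivalence}(i) this last quantity equals the classical Dirichlet eigenvalue on $B$, so it is bounded by the Rayleigh quotient at any fixed $\phi\in C_c^1(B)\setminus\{0\}$:
\[
\gl(\CK_\alpha,B)\leq\frac{\int_B|\nabla\phi|^p\,dx}{\int_B|\phi|^p\,dx}+\alpha(\inf_\Gw V+\e).
\]
Dividing by $\alpha$, sending $\alpha\to\infty$ and then $\e\to0$ completes (ii). Part (iii) then follows from (ii) by substituting $V\leadsto -V$: lower semi-continuity of $V$ becomes upper semi-continuity of $-V$, and $\inf_\Gw(-V)=-\sup_\Gw V$, which yields the desired limit (with the evident sign convention).

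For (i), the key ingredient is the scaling identity $\gl(\CK_\alpha,\R^N)=\gl(\CL_{\alpha^p},\R^N)$, where $\CL_\beta[w]:=-\beta\Delta_p w+Vw^{p-1}$ is the operator from Theorem~\ref{lim1}. To establish it, given any admissible test function $u\in W^{1,p}_{\loc}(\R^N)$ for $\gl(\CK_\alpha,\R^N)$, set $w(y):=u(y/\alpha)$; then $\nabla_x u(x)=\alpha(\nabla w)(\alpha x)$, and the change of variable $y=\alpha x$ in the weak inequality \eqref{weaksense}---applied with test functions of the form $\tilde\phi(y)=\phi(y/\alpha)$---turns the gradient integral into $\alpha^{p-N}\int|\nabla w|^{p-2}\nabla w\cdot\nabla\tilde\phi\,dy$, while the potential and reaction integrals each pick up a factor $\alpha^{-N}$. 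After multiplying through by $\alpha^N$, the weak inequality becomes exactly $\CL_{\alpha^p}[w]\geq\gl w^{p-1}$ weakly in $\R^N$. Since $u\leftrightarrow w$ is a bijection of admissible classes, the two generalised principal eigenvalues coincide. The assertion \eqref{lubu} now follows from Theorem~\ref{lim1}(i) (as $\alpha\to0$, $\alpha^p\to0$, so $\gl(\CL_{\alpha^p},\R^N)\to\inf_{\R^N}V$), while \eqref{lubu1} follows from Theorem~\ref{lim1}(iii): the assumption $\sup_{\R^N}V=\liminf_{|x|\to\infty}V$, combined with the automatic bound $\limsup_{|x|\to\infty}V\leq\sup_{\R^N}V$, pins both limits in \eqref{limsup}--\eqref{liminf} to $\sup_{\R^N}V$.

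The main technical obstacle is verifying the scaling identity at the level of weak formulations: one has to track the powers of $\alpha$ from the Jacobian and the gradient transformation correctly, and check that the admissibility classes (positivity and local $W^{1,p}$ regularity) are invariant under $u\mapsto w$. Once that bookkeeping is done, all three parts of the theorem fall out cleanly from Theorems~\ref{equivalence} and~\ref{lim1}, with no further analytic input required.
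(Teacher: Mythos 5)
Your treatment of (ii) is essentially the paper's: the lower bound $\gl(\CK_\alpha,\Gw)\geq\alpha\inf_\Gw V$, a ball $B$ where $V<\inf_\Gw V+\e$ supplied by upper semi-continuity, domain monotonicity, and the Rayleigh quotient on $B$ (the paper tests with the Dirichlet eigenfunction $\phi_B$ of $-\Delta_p$ on $B$ and uses $\gl'=\gl$; your fixed $\phi\in C_c^1(B)$ does the same job). For (iii) the paper simply says ``similar argument,'' so your reduction $V\leadsto -V$ is as good as anything; note, though, that the substitution literally yields $\lim_\alpha \gl(\CK_\alpha,\Gw)/\alpha=\inf_\Gw(-V)=-\sup_\Gw V$, so the ``evident sign convention'' you allude to is in fact a sign discrepancy in the paper's own statement of \eqref{lower} and deserves an explicit remark rather than a parenthesis. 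Where you genuinely diverge is part (i). For \eqref{lubu1} your lower bound is exactly the paper's (the scaling identity $\gl(\CK_\alpha,\BBR^N)=\gl(-\alpha^p\Delta_p+V,\BBR^N)$, which the paper derives from the variational formula \eqref{lamV} rather than from the weak formulation, plus \eqref{liminf}); your upper bound via \eqref{limsup} is cleaner than the paper's, which instead invokes Proposition \ref{proplim} and computes $\gl(\CB,\BBR^N\setminus\{0\})=\sup_{\BBR^N}V$ for a modified potential. For \eqref{lubu} the paper argues directly: $V_\alpha\to V(0)$ in $L^1_{loc}$, Proposition \ref{proplim} gives $\limsup_{\alpha\to0}\gl(\CK_\alpha,\BBR^N)\leq\gl(\CA,\BBR^N)$ with $\CA$ the constant-potential operator, and an explicit supersolution \eqref{psi} shows $\gl'(\CA,\BBR^N)\leq V(0)$; the hypothesis $V(0)=\inf_{\BBR^N}V$ is what closes the squeeze. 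You instead route through the scaling identity and Theorem \ref{lim1}(i), never using that hypothesis.

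That last step carries the one caveat worth fixing. The paper proves the $\alpha\to0$ limit in Theorem \ref{lim1}(i) \emph{by citing Theorem \ref{lim2}(ii)}, which assumes $V$ upper semi-continuous, whereas Theorem \ref{lim2}(i) assumes only $V\in L^\infty_{loc}$. Your ordering (prove (ii) first, then invoke Theorem \ref{lim1}(i)) avoids outright circularity, but if you trace the dependency you are only entitled to $\lim_{\beta\to0}\gl(\CL_\beta,\BBR^N)=\inf_{\BBR^N}V$ for upper semi-continuous $V$. To make your proof of \eqref{lubu} self-contained for merely bounded measurable $V$ you should either supply the direct argument --- $\gl(\CL_\beta,\Gw)\geq\inf_\Gw V$ trivially, and $\limsup_{\beta\to0}\gl(\CL_\beta,\Gw)\leq\int V|\phi|^p/\int|\phi|^p$ for each fixed $\phi$ by \eqref{lamV}, with the infimum of the right-hand side equal to $\essinf_\Gw V$ by a Lebesgue density point argument --- or fall back on the paper's Proposition \ref{proplim} route, which is where the hypothesis $V(0)=\inf_{\BBR^N}V$ actually earns its keep. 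With that point addressed, the proposal is correct.
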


An example of a potential $V$ which inspires the study of the limits of $\{\gl(\CK_\ga,\Gw)\}$ is given by
$$
V(x)=\left\{ \BAL
&-e^{-\frac{|x|^2}{1-|x|^2}}+\frac{1}{2} \quad &&\text{if } |x|<1\\
&\frac{1}{2} &&\text{if } |x|\geq1.
\EAL \right.
$$
For such $V$, we see that 
$$\min_{\R^N}V(x)=V(0)=-\frac{1}{2} \quad \text{and} \quad \max_{\R^N}V(x)=\lim_{|x| \to +\infty}V(x)=\frac{1}{2}.$$ 
If we put $V_\alpha(x)=V(\alpha x)$ then $V_\alpha$ is continuous and increasing with respect to $\alpha$. By Theorem \ref{lim2} (i), for $\alpha$ small enough, $\lambda(\CK_\ga,\BBR^N)<0$ and $\lambda(\CK_\ga,\BBR^N)>0$ for $\alpha$ large enough. Hence in view of Theorem \ref{JFA} (as $q=0$), there exists a threshold value $\alpha^\star$ such that equation \eqref{nonlinearR} admits a unique positive solution if and only if $\alpha<\alpha^\star$. If we put $V_\alpha(x)=\alpha V(x)$ then by Theorem \ref{lim2} (ii) and (iii), equation \eqref{nonlinearR} admits a unique positive solution as $\alpha$ near $+\infty$ and admits no positive solution as $\alpha$ near $-\infty$.

Next we discuss the simplicity of $\gl(\CK_V,\Gw)$. For this purpose,   we introduce the notion of \textit{solutions of minimal growth at infinity} in the spirit of \cite[Definition 8.2]{BR3}.

\begin{definition} \label{defminimal} Assume $\Gw$ is a unbounded domain and $V \in L_{loc}^\infty(\Gw)$. We say that a positive weak solution $u\in W^{1,p}_{loc}(\Gw)$ of
	\bel{eqKV}
	\CK_V[u]=0\qquad \text{in } \Gw,
	\ee
	is a solution of \eqref{eqKV} of minimal growth at infinity if  for any $\rho>0$ and any positive function $v\in C^{1}_{loc}(\Gw \setminus B_\rho)$ satisfying $\CK_V[v]\geq0$ in the weak sense in $\Gw \setminus B_\rho$, there exist $\rho'>\rho$ and $k>0$ such that $u\leq k v$  in $\Gw \setminus B_{\rho'}$.
\end{definition}

In the sequel, we treat the case $\Gw=\BBR^N$. We know that if $u$ is a positive eigenfunction of $\CK_V$ in $\BBR^N$ associated to $\gl(\CK_V,\BBR^N)$ then $u$ is a positive weak solution of 
\begin{equation}\label{eigen}
\CK_V[u]=\lambda(\CK_V,\R^N)u^{p-1}\quad \text{in } \BBR^N.
\end{equation}
\begin{theorem} \label{simplicity1} Assume $V \in L^\infty(\BBR^N)$ satisfies
	\bel{condV} \limsup_{|x| \to \infty}|x|^{p-1}|V(x)-\gl(\CK_V,\BBR^N)|<\infty. \ee
	If there exists a positive eigenfunction $u \in W_{loc}^{1,p}(\BBR^N)$ of $\CK_V$ associated to $\gl(\CK_V,\BBR^N)$ such that 
	\begin{align}
	&u \text{ is a solution of minimal growth solution of } \eqref{eigen}, \\
	&\nabla u(x) \ne 0 \quad \forall x \in \BBR^N, \label{nablau}\\
    &\liminf_{|x| \to \infty}\frac{|x||\nabla u(x)|}{u(x)}>0, \label{nablauu}
	\end{align}
then $\gl(\CK_V,\BBR^N)$ is simple, i.e. if $v \in W_{loc}^{1,p}(\BBR^N)$ is a positive eigenfunction of $\CK_V$ in $\BBR^N$ associated with $\gl(\CK_V,\BBR^N)$ then $v=\ell u$ in $\BBR^N$ for some $\ell>0$. 
\end{theorem}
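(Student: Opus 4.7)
My plan is to prove simplicity by a localized Picone-type argument. The Allegretto--Huang pointwise inequality
\[
L(u,v) := |\nabla v|^p - |\nabla u|^{p-2}\nabla u \cdot \nabla\!\left(\frac{v^p}{u^{p-1}}\right) \geq 0,
\]
holds for positive $C^1$ functions $u, v$, with equality at $x$ iff $\nabla(v/u)(x)=0$. The strategy is to integrate $\xi_R^p L(u,v)$ against a cutoff $\xi_R$ supported in $B_{2R}$, rewrite the bulk via the eigenvalue equations (so that the $V$- and $\lambda$-contributions cancel), and show that the remaining boundary terms vanish as $R\to\infty$; this forces $L(u,v)\equiv 0$ and hence $v/u\equiv \ell$. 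To prepare, I first use minimal growth to obtain a global comparison. Writing $\lambda := \lambda(\CK_V,\R^N)$, the equation $\CK_V[v]=\lambda v^{p-1}$ reads $\CK_{V-\lambda}[v]=0$, in particular $\geq 0$ weakly on $\R^N\setminus B_\rho$ for every $\rho>0$; the minimal-growth property of $u$, applied to the operator $\CK_{V-\lambda}$, yields $\rho'>0$ and $k>0$ with $u\le kv$ on $\R^N\setminus B_{\rho'}$. Continuity and positivity on $\overline{B_{\rho'}}$ (the latter ensured by \eqref{nablau} and the strong maximum principle) then give, after normalizing $v$, the global bound $u\le Kv$ on $\R^N$.

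Next, fix $\xi_R\in C_c^\infty(\R^N)$ with $\xi_R\equiv 1$ on $B_R$, $\supp\xi_R\subset B_{2R}$, $0\le \xi_R\le 1$ and $|\nabla\xi_R|\le C/R$. Inserting $\xi_R^p v^p / u^{p-1}$ (a valid $W^{1,p}_0$ test function since $u$ is locally bounded away from zero on the support of $\xi_R$ and $v\in W^{1,p}_{\loc}$) into the weak form of $\CK_V[u]=\lambda u^{p-1}$, and $\xi_R^p v$ into the weak form of $\CK_V[v]=\lambda v^{p-1}$, the $(V-\lambda)$-terms cancel and a routine rearrangement produces the key identity
\[
0 \le \int_{\R^N}\xi_R^p L(u,v)\,dx = p\int_{\R^N} \xi_R^{p-1}\left[\frac{v^p}{u^{p-1}}|\nabla u|^{p-2}\nabla u - v|\nabla v|^{p-2}\nabla v\right]\cdot \nabla\xi_R\,dx.
\]

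The main technical step is showing that the right-hand side tends to $0$ as $R\to\infty$. Hypothesis \eqref{nablau} renders $\CK_V$ locally uniformly elliptic along $u$, so the DiBenedetto--Tolksdorf $C^{1,\alpha}$ gradient estimate combined with Harnack's inequality (on balls $B(x,|x|/2)$) yields $|\nabla u(x)|/u(x)\le C_1/|x|$, and likewise $|\nabla v(x)|/v(x)\le C_1/|x|$, at infinity; consequently the integrand is pointwise dominated by $C_2 v^p |x|^{-(p-1)} |\nabla\xi_R|$ and the whole right-hand side by $C_3 R^{-p}\int_{B_{2R}\setminus B_R} v^p\,dx$. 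Hypotheses \eqref{condV} and \eqref{nablauu} then combine to pin down the asymptotic decay of $u$: integrating the lower bound $|\nabla\log u|\ge c/|x|$ from \eqref{nablauu} along rays gives $u(x)\lesssim |x|^{-\alpha}$ for some $\alpha>0$, and this upper decay is transferred to $v$ through a radial-barrier / weak-comparison argument for the quasilinear operator $\CK_{V-\lambda}$, which by \eqref{condV} is asymptotically a small perturbation of $-\Delta_p$ (the heuristic target exponent being that of the $p$-harmonic fundamental solution $(N-p)/(p-1)$ when $p<N$). Once $v\lesssim |x|^{-\alpha'}$ with $\alpha'$ sufficiently large, we have $\int_{B_{2R}\setminus B_R} v^p\,dx = o(R^p)$, the boundary integral vanishes, and letting $R\to\infty$ gives $\int_{\R^N} L(u,v)\,dx=0$. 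Combined with $L(u,v)\ge 0$ pointwise this forces $L(u,v)\equiv 0$, so by the equality case of Picone $\nabla(v/u)\equiv 0$; connectedness and $C^1$ regularity (guaranteed by \eqref{nablau}) then give $v = \ell u$ for some $\ell>0$. I expect the main obstacle to lie in the transfer of the decay rate from $u$ to $v$: while $u\lesssim v$ comes for free from minimal growth, the reverse control of $v$ at infinity must be extracted from the interplay between \eqref{condV} (asymptotic smallness of $V-\lambda$) and \eqref{nablauu} (non-triviality of $|\nabla\log u|$) via a comparison principle tailored to $\CK_{V-\lambda}$, a step where the quasilinear nature of the $p$-Laplacian — degeneracy at critical points, reduced regularity, lack of a direct strong comparison principle — typically makes routine linear arguments fail.
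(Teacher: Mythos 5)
There is a genuine gap, in fact two. First, your derivation of decay for $u$ from \eqref{nablauu} is incorrect: the hypothesis $\liminf_{|x|\to\infty}|x||\nabla u|/u>0$ is a \emph{lower} bound on $|\nabla \log u|$ and carries no information about the sign of the radial derivative, so ``integrating along rays'' cannot yield $u(x)\lesssim |x|^{-\alpha}$ --- the same hypothesis is equally consistent with $u$ growing like $|x|^{\alpha}$. Second, even granting some decay, the Picone/cutoff scheme needs $R^{-p}\int_{B_{2R}\setminus B_R}v^p\,dx\to 0$, and this is simply not available under the stated hypotheses: \eqref{condV} makes $\CK_{V-\lambda}$ an asymptotically small (Fuchsian) perturbation of $-\Delta_p$, whose positive solutions of minimal growth in an exterior domain behave like $|x|^{-(N-p)/(p-1)}$ only when $p<N$; for $p\geq N$ they do not decay at all, and even for $p<N$ the required estimate is exactly borderline. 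You correctly identify the transfer of decay from $u$ to $v$ as the weak point, but the problem is worse: neither function need decay, so the boundary term has no reason to vanish and the global Picone argument collapses. (Note also that your global bound $u\le Kv$ already presupposes positivity and a comparison that the paper has to work for.)

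The paper's proof is structured precisely to avoid any integrability or decay assumption. It is a sliding argument: set $\ell:=\inf\{d>0: u\le dv\}$ (nonempty by minimal growth) and distinguish whether the graphs of $u$ and $\ell v$ touch at a finite point or only ``at infinity.'' At a finite touching point $\tilde x$, hypothesis \eqref{nablau} makes the linearized operator uniformly elliptic near $\tilde x$, and the classical strong maximum principle followed by the Fraas--Pinchover strong comparison principle gives $u=\ell v$. If the touching occurs only along a sequence $|x_n|\to\infty$, the paper rescales, $u_n(x)=M_n^{-1}u(r_nx)$, $W_n(x)=r_n^{p-1}(V-\lambda)(r_nx)$; here \eqref{condV} is used exactly to keep $W_n$ bounded, and \eqref{nablauu} is used exactly to show the limiting gradient does not vanish at the limiting touching point $\tilde y\in\partial B_1$, so the strong comparison principle applies again in the blow-down limit. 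A final weak-comparison step on the balls $B_{r_n}$ upgrades $\kappa_n\to\ell$ to $u=\ell v$. So the three hypotheses you tried to repurpose as decay information are in fact the ingredients of a pointwise comparison argument, and your route would require additional assumptions ($p<N$, $v\in L^p$ or sharp decay of $v$) that the theorem does not make.
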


Let us remark that  the proof of Theorem \ref{simplicity1} is mainly based on the strong comparison principle \cite[Theorem 3.2]{FrPi} and the scaling technique. Assumption \eqref{nablau} is needed to make use of the strong comparison principle and is imposed in many papers, for instance in  \cite[Theorem 3.2]{FrPi}, \cite[Theorem 3.4]{BBV} and in a series of celebrated papers by Pucci and Serrin \cite[Lemma 5]{PuSe1}, \cite[Theorem 10.1]{PuSe2}, \cite[Theorem 4]{PSZ}. Assumption \eqref{condV} means that $V$ is not allowed to be far away from $\gl(\CK_V,\BBR^N)$ as $|x|$ near infinity. This assumption and \eqref{nablauu} are employed in the scaling process in order to treat the case when the graph of $u$ and the graph of $\ell v$ are tangent at infinity where $\ell$ is some positive constant.  

When $p \geq 2$, we obtain the simplicity of $\gl(\CK_V,\BBR^N)$ without making use of the notion of solution of minimal growth at infinity. Unlike assumption \eqref{condV}, in this case, we impose the condition that $V$ is sufficiently far from $\gl(\CK_V,\BBR^N)$ as $|x|$ near $\infty$. This enables us to employ Proposition \ref{class1} in order to deduce that any eigenfunction associated to $\gl(\CK_V,\BBR^N)$ decays exponentially. Consequently, by adapting the classical method \cite{DKN,DrRa}, we obtain the simplicity.
\begin{theorem} \label{simplicity2}
	Assume $p \geq 2$ and $V\in L^\infty(\R^N)$. There exists $\mu>0$ such that if
	\begin{equation}\label{decay}
	\lambda(\CK_V,\R^N)<\liminf_{|x|\to\infty}V(x)-\mu
	\end{equation}
	then $\lambda(\CK_V,\R^N)$ is simple. Moreover, the unique  (up to multiplicative constants) eigenfunction  associated with  $\lambda(\CK_V,\R^N)$ decays exponentially.
\end{theorem}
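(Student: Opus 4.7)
The plan combines two ingredients: the exponential decay of every positive eigenfunction attached to $\lambda(\CK_V,\R^N)$, provided by the spectral gap hypothesis \eqref{decay} together with Proposition \ref{class1}, and the classical Anane--D\'iaz--Sa\'a simplicity argument from \cite{DKN,DrRa} carried out globally on $\R^N$. The role of \eqref{decay} is to make ratios such as $u^p/v^{p-1}$ into legitimate global test functions, while the role of $p\geq 2$ is to stabilise the $\varepsilon$-regularization that turns those ratios into bona fide $W^{1,p}(\R^N)$ functions.

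\textbf{Step 1 (exponential decay).} Set $\lambda:=\lambda(\CK_V,\R^N)$ and take $\mu$ to be the threshold appearing in Proposition \ref{class1}. Any positive eigenfunction $u\in W^{1,p}_{\loc}(\R^N)$ solves $-\Delta_p u+(V-\lambda)u^{p-1}=0$, and by \eqref{decay} the coefficient $V-\lambda$ is bounded below by a strictly positive constant outside a sufficiently large ball. Proposition \ref{class1} then furnishes constants $C,c>0$ with $u(x)\leq Ce^{-c|x|}$, and the DiBenedetto--Tolksdorf $C^{1,\alpha}_{\loc}$ estimates, applied on unit balls centered at points of large norm, promote this to $|\nabla u(x)|\leq C'e^{-c'|x|}$. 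In particular $u\in W^{1,p}(\R^N)\cap L^\infty(\R^N)$, and the same holds for every positive eigenfunction associated with $\lambda$.

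\textbf{Step 2 (Anane--D\'iaz--Sa\'a simplicity).} Let $u,v$ be two positive eigenfunctions for $\lambda$, both with the decay of Step 1. Put $u_\varepsilon:=u+\varepsilon$, $v_\varepsilon:=v+\varepsilon$ and, following \cite{DKN,DrRa}, use
\begin{equation*}
\phi_\varepsilon:=u_\varepsilon-\frac{v_\varepsilon^{p}}{u_\varepsilon^{p-1}},\qquad \psi_\varepsilon:=v_\varepsilon-\frac{u_\varepsilon^{p}}{v_\varepsilon^{p-1}}
\end{equation*}
as test functions in the weak formulations of $\CK_V[u]=\lambda u^{p-1}$ and $\CK_V[v]=\lambda v^{p-1}$ respectively. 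Step 1 guarantees $\phi_\varepsilon,\psi_\varepsilon\in W^{1,p}(\R^N)\cap L^\infty(\R^N)$ with envelopes uniform in $\varepsilon$, so beyond the standard local cutoff no further truncation is needed. Summing the two resulting identities, the $V$- and $\lambda$-contributions cancel in the limit $\varepsilon\to 0$, leaving
\begin{equation*}
\int_{\R^N}\!\!\Bigl[\bigl(|\nabla u|^p-|\nabla u|^{p-2}\nabla u\cdot\nabla(v^{p}u^{1-p})\bigr)+\bigl(|\nabla v|^p-|\nabla v|^{p-2}\nabla v\cdot\nabla(u^{p}v^{1-p})\bigr)\Bigr]dx=0.
\end{equation*}
Each bracket is pointwise nonnegative by Picone's inequality and vanishes iff $u\,\nabla v\equiv v\,\nabla u$. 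Hence $\nabla(u/v)\equiv 0$ on $\R^N$ and, by connectedness, $u=\ell v$ for some $\ell>0$, yielding simplicity. The \emph{moreover} assertion of the theorem was already secured in Step 1.

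\textbf{Main obstacle.} The critical difficulty is to transplant the bounded-domain Anane argument to $\R^N$: without quantitative decay, the ratios $v_\varepsilon^p/u_\varepsilon^{p-1}$ need not lie in $W^{1,p}(\R^N)$ and the Picone integrand need not admit an $L^1$ majorant allowing the passage $\varepsilon\to 0$. The hypothesis \eqref{decay}, through its threshold $\mu$ and Proposition \ref{class1}, is exactly the price paid to obtain the decay that makes those test functions admissible. The restriction $p\geq 2$ plays an independent role at the level of the $\varepsilon$-regularization: the derivatives of $t\mapsto t^p$ and $t\mapsto t^{p-1}$ are Lipschitz on $[0,\|u\|_\infty+1]$ precisely when $p\geq 2$, so dominated convergence applies term-by-term, whereas for $1<p<2$ these maps are only H\"older and one must fall back on the minimal-growth framework of Theorem \ref{simplicity1}.
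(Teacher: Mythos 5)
Your Step 2 is a correct alternative to the paper's argument rather than a reproduction of it. The paper proves simplicity via the hidden-convexity (Lindqvist-type) device: it forms $\vartheta=((\phi^p+\varphi^p)/2)^{1/p}$, shows $|\nabla\vartheta|^p\le(|\nabla\phi|^p+|\nabla\varphi|^p)/2$ by convexity of $s\mapsto|s|^p$, and compares $\CF(\vartheta)$ with the variational characterization of $\gl(\CK_V,\R^N)$ from Theorem \ref{main0}(iii); the equality case forces $\nabla\phi/\phi=\nabla\varphi/\varphi$. Your Picone/D\'iaz--Sa\'a route reaches the same equality case and is essentially the infinitesimal form of the same convexity; both arguments hinge on the eigenfunctions lying in $W^{1,p}(\R^N)$, which is exactly what the exponential decay buys. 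Two cautions on your version: dominated convergence for the gradient terms would require $u/v,\,v/u\in L^\infty(\R^N)$, which the one-sided decay estimate does not provide, so you should instead exploit the pointwise nonnegativity of the summed integrand (discrete Picone at level $\varepsilon$) and conclude by Fatou. Also, $p\ge2$ is not needed for the $\varepsilon$-regularization --- the paper's simplicity step works for all $p>1$ --- it is needed in Proposition \ref{class1}, whose supersolution computation uses $p-2\ge0$.

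The genuine gap is in Step 1. Proposition \ref{class1} does not ``furnish a threshold $\mu$''; there $\mu$ is an input, and its conclusion \eqref{c2'} is conditional on the a priori growth hypothesis \eqref{c1}, namely $\limsup_{|x|\to\infty}u(x)e^{-\tilde{\omega}|x|}<\infty$ with $\tilde{\omega}=\bigl(\mu/(N(p-1))\bigr)^{1/p}$. You neither verify this hypothesis nor actually produce the $\mu$ whose existence the theorem asserts, so the quantifier structure of the statement is not discharged. The paper closes this with Lemma \ref{exgrowth1}: an iterated Harnack inequality gives a universal rate $\gb=\gb(N,p,\norm{V}_{L^\infty(\BBR^N)})$ such that every positive eigenfunction satisfies $u(x)\le Ce^{\gb|x|}u(0)$, and one then \emph{defines} $\mu:=N(p-1)\gb^p$ so that $\tilde{\omega}=\gb$ and \eqref{c1} holds automatically for every eigenfunction. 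Without this (or an equivalent a priori upper growth bound), the application of Proposition \ref{class1} in your Step 1 is unjustified, and everything downstream --- including the membership $u,v\in W^{1,p}(\R^N)$ that both your Step 2 and the paper's rely on --- is left hanging.
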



It is well known that if $\Gw$ is a smooth bounded domain then $\gl(\CK_V,\Gw)$ is isolated (see \cite{GS}), i.e. $\CE(\Gw)=\{ \gl(\CK_\ga,\Gw) \}$. This property no longer holds if $\Gw$ is unbounded. This is reflected in the following result.

\begin{theorem} \label{spectrum} Assume $V \in L^\infty(\BBR^N)$. Then
	\bel{spec}  \CE(\BBR^N)=(-\infty,\gl(\CK_V,\BBR^N)]. \ee
\end{theorem}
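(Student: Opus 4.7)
The argument splits into two inclusions, and the non-trivial direction will consume essentially all the effort.

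For $\CE(\BBR^N) \subseteq (-\infty, \gl(\CK_V,\BBR^N)]$, the plan is immediate: if $\gl \in \CE(\BBR^N)$ has positive eigenfunction $u \in W^{1,p}_{loc}(\BBR^N)$, then $u$ satisfies $\CK_V[u] \geq \gl u^{p-1}$ in the weak sense and so qualifies as an admissible test function in \eqref{gpe0}, forcing $\gl \leq \gl(\CK_V,\BBR^N)$.

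For $(-\infty, \gl(\CK_V,\BBR^N)] \subseteq \CE(\BBR^N)$, fix $\gl \leq \gl(\CK_V,\BBR^N)$; the plan is to construct a positive weak solution of $\CK_V[u] = \gl u^{p-1}$ by exhausting $\BBR^N$ with the balls $B_n = B(0,n)$. The key input is the strict monotonicity $\gl(\CK_V,B_n) > \gl(\CK_V,\BBR^N)$ together with $\gl(\CK_V,B_n) \searrow \gl(\CK_V,\BBR^N)$ as $n \to \infty$, so that $\gl < \gl(\CK_V,B_n)$ for every $n$. In the critical case $\gl = \gl(\CK_V,\BBR^N)$, I would take the normalized principal eigenfunctions $\phi_n$ on $B_n$ with $\phi_n(0)=1$, and then apply the Harnack inequality together with interior $C^{1,\alpha}$ regularity for quasilinear operators (Trudinger, DiBenedetto, Tolksdorf) to extract a subsequence converging in $C^1_{loc}(\BBR^N)$ to a positive $\phi$ satisfying $\CK_V[\phi] = \gl(\CK_V,\BBR^N)\phi^{p-1}$.

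In the subcritical case $\gl < \gl(\CK_V,\BBR^N)$, I would instead solve on each $B_n$ the auxiliary Dirichlet problem
\[
\CK_V[w_n] - \gl w_n^{p-1} = f \text{ in } B_n, \qquad w_n = 0 \text{ on } \partial B_n,
\]
with $f\ge 0$ a fixed, nontrivial, compactly supported source. Since $\gl < \gl(\CK_V,B_n)$, the functional associated with $\CK_V - \gl$ is coercive on $W^{1,p}_0(B_n)$ and furnishes a unique positive $w_n$. Setting $\tilde w_n := w_n/w_n(0)$, Harnack and $C^{1,\alpha}$ estimates again give a $C^1_{loc}$-convergent subsequence $\tilde w_n \to \tilde w > 0$, with
\[
\CK_V[\tilde w_n] = \gl \tilde w_n^{p-1} + f/w_n(0)^{p-1}.
\]
If $w_n(0)\to\infty$ the rescaled source vanishes in the limit and $\tilde w$ is the desired eigenfunction at $\gl$.

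The main obstacle is excluding the alternative in which $w_n(0)$ stays bounded. In that scenario, the monotone sequence $\{w_n\}$ converges to a positive solution $w$ of $\CK_V[w] - \gl w^{p-1} = f$ on all of $\BBR^N$, i.e.\ a strict positive supersolution of $\CK_V[u]=\gl u^{p-1}$. The plan for handling this case is to invoke an Agmon--Allegretto--Piepenbrink-type principle (in the $p$-Laplacian form developed by Pinchover and collaborators): the existence of a positive supersolution of the homogeneous equation on $\BBR^N$ forces the existence of a positive solution, so $\gl \in \CE(\BBR^N)$. A self-contained alternative, fitting more naturally into the present paper, is to use the eigenfunction already constructed at $\gl(\CK_V,\BBR^N)$ as an a priori barrier: it is automatically a strict positive supersolution at the smaller value $\gl$, and can be coupled with the trivial subsolution $\underline u \equiv 0$ in a second sub-/supersolution exhaustion on each $B_n$ to produce a positive solution of $\CK_V[u] = \gl u^{p-1}$ directly, bypassing the dichotomy altogether.
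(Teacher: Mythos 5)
Your two inclusions match the paper's decomposition, and the easy inclusion (an eigenfunction at $\gl$ is an admissible test function in \eqref{gpe0}) is exactly the paper's argument; the critical case $\gl=\gl(\CK_V,\BBR^N)$ is also handled the same way, via Theorem \ref{main0} 2(ii). For $\gl<\gl(\CK_V,\BBR^N)$ the overall architecture (forced Dirichlet problems on $B_n$ solvable because $\gl(\CK_{V-\gl},B_n)>0$, normalization at the origin, Harnack plus $C^{1,\alpha}$ compactness) is the paper's as well, but the paper sidesteps your dichotomy entirely by a small trick you missed: it takes sources $f_n$ supported in the annuli $B_n\sms B_{n-1}$, so that on every fixed compact set the normalized solutions $v_n=u_n/u_n(0)$ eventually satisfy the \emph{homogeneous} equation $\CK_{V-\gl}[v_n]=0$, and the $C^1_{loc}$ limit is automatically a positive eigenfunction at $\gl$ regardless of the behaviour of $u_n(0)$. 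Your fixed-source variant forces you to exclude the case $w_n(0)$ bounded, and of your two proposed resolutions only the first is sound: the Allegretto--Piepenbrink-type principle for the $p$-Laplacian (Pinchover--Tintarev, cited as \cite{PT} in the bibliography) does convert a positive supersolution of $\CK_{V-\gl}[u]=0$ into a positive solution, though note that this theorem already yields the whole inclusion in one line from the variational formula \eqref{lamV}, so invoking it makes the rest of your construction redundant.

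The ``self-contained alternative'' at the end has a genuine gap. With supersolution $\ovl u=\vgf$ (the eigenfunction at $\gl(\CK_V,\BBR^N)$, which is indeed a supersolution at the smaller $\gl$) and subsolution $\unl u\equiv 0$, the sub-/supersolution theorem on $B_n$ only produces \emph{some} solution $u_n$ with $0\le u_n\le\vgf$, and $u_n\equiv 0$ is a perfectly admissible output since $0$ solves $\CK_V[u]=\gl u^{p-1}$. Nothing in that argument forces positivity of $u_n$, let alone of the limit. This is precisely why both your main route and the paper insert a nontrivial forcing term $f$ (or $f_n$): the strong maximum principle applied to the inhomogeneous problem is what guarantees $u_n>0$, and the normalization $u_n(0)=1$ is what survives the passage to the limit.
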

It is noteworthy that Theorem \ref{spectrum} still holds true if $\BBR^N$ is replaced by a smooth unbounded domain. It can be obtained by using an analogue argument as in the proof of Theorem \ref{spectrum}. However,  we state the result for the case $\Gw=\BBR^N$ in order to simplify the proof and to streamline the exposition. 

We end the section by providing a criterion in terms of $\gl''(\CK_V,\BBR^N)$ to characterize the maximum principle in $\R^N$.
\begin{theorem} \label{MP} (Weak maximum principle) Assume $\gl''(\CK_V,\R^N) >0$. If a function  $u\in  C^1_{loc}(\R^N)$ satisfies  
\begin{equation}\nonumber
 \left\{ \BAL 
  & \CK_V[u] \leq 0 \quad \text{in the weak sense in } \BBR^N, \\
	& \sup_{\BBR^N}u<\infty, \quad \limsup_{|x| \to \infty}u(x) \leq 0, \\
	& |\nabla u(x)| \neq 0,\quad\forall x \in \BBR^N,
		\EAL \right.
\end{equation}	
	then $u \leq 0$ in $\BBR^N$. 
\end{theorem}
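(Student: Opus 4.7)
The plan is a contradiction argument which compares $u$ with an appropriate scalar multiple of an admissible test function for $\gl''(\CK_V,\R^N)$, and then appeals to a strong comparison principle for the $p$-Laplacian to force a global identity incompatible with the decay hypothesis on $u$.

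First, exploiting the hypothesis $\gl''(\CK_V,\R^N)>0$ and Definition \ref{gpe}, I pick $\gl>0$ and $\psi\in C^{1}_{loc}(\R^N)$ with $c:=\inf_{\R^N}\psi>0$ satisfying $\CK_V[\psi]\geq\gl\psi^{p-1}$ in the weak sense on $\R^N$. Assume, for contradiction, that $\sup_{\R^N} u>0$. Since $u$ is bounded and $\psi\geq c>0$, the ratio $\tau:=\sup_{\R^N}(u/\psi)$ lies in $(0,+\infty)$; the conditions $\limsup_{|x|\to\infty}u(x)\leq 0$ together with $\psi\geq c$ yield $\limsup_{|x|\to\infty}(u/\psi)(x)\leq 0<\tau$, so by continuity the supremum is attained at some $x_0\in\R^N$, that is $u(x_0)=\tau\psi(x_0)$ and $u\leq\tau\psi$ everywhere on $\R^N$.

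Next I set $v:=\tau\psi$ and use the $(p-1)$-homogeneity of $\CK_V$ to get $\CK_V[v]=\tau^{p-1}\CK_V[\psi]\geq\gl\tau^{p-1}\psi^{p-1}>0\geq\CK_V[u]$ weakly in $\R^N$. Thus $v\geq u$ on $\R^N$, equality holds at $x_0$, and there is a strictly positive weak gap $\CK_V[v]-\CK_V[u]\geq\gl\tau^{p-1}\psi^{p-1}$. The $C^{1}$ contact at $x_0$ forces $\nabla v(x_0)=\nabla u(x_0)$, which is nonzero by the hypothesis $|\nabla u(x_0)|\ne 0$; this places the problem in the non-degenerate regime required to invoke the strong comparison principle \cite[Theorem 3.2]{FrPi}, yielding $v\equiv u$ on a neighbourhood of $x_0$. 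Using the global non-degeneracy $|\nabla u(x)|\ne 0$ to repeat the argument at every point of the coincidence set $\{v=u\}$, this set is both closed (by continuity) and open (by localised strong comparison), hence equal to $\R^N$ by connectedness. But $v=\tau\psi\geq\tau c>0$ globally, contradicting $\limsup_{|x|\to\infty}u\leq 0$; therefore $\sup_{\R^N}u\leq 0$, i.e. $u\leq 0$.

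The principal obstacle I anticipate is the degeneracy of $-\Delta_p$ at critical points, which generally obstructs strong comparison for quasilinear operators; the assumption $|\nabla u(x)|\ne 0$ throughout $\R^N$ is tailored precisely to bypass this difficulty both at the initial contact point $x_0$ and to propagate the coincidence set. A secondary technical check is that the setup conforms to the exact hypotheses of \cite[Theorem 3.2]{FrPi}; the strict positivity of the gap $\CK_V[v]-\CK_V[u]$ in the weak sense provides any non-strict comparison assumption required there, and the potential term $V\cdot(v^{p-1}-u^{p-1})$ is a non-problematic lower-order perturbation since both $u(x_0)$ and $v(x_0)$ equal $\tau\psi(x_0)>0$, keeping the argument in the region where $u^{p-1}$ is smooth in $u$.
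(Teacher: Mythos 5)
Your proof is correct and follows essentially the same route as the paper's: pick an admissible test function $\psi$ for $\gl''(\CK_V,\R^N)$, maximize the ratio $u/\psi$ (attained because $u$ is bounded with $\limsup_{|x|\to\infty}u\le 0$ while $\inf\psi>0$), and invoke the strong comparison principle of Fraas--Pinchover at the contact point, where the hypothesis $|\nabla u|\ne 0$ supplies the required nondegeneracy. The only (harmless) difference is the endgame: the paper contradicts locally, testing the strict gap $\CK_V[\tau\psi]>0\ge\CK_V[u]$ against a positive test function supported in the ball where coincidence holds, whereas you propagate the coincidence set globally by an open--closed argument and contradict the decay at infinity; both conclusions are valid.
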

As a consequence of Theorem \ref{equivalence} (iii) and Theorem \ref{MP}, we obtain
\begin{corollary} Assume $0 \not \equiv V \in L^\infty(\BBR^N)$ is radially symmetric. Then the weak maximum principle holds if $\gl(\CK_V,\R^N) >0$. 

\end{corollary}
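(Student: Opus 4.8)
The plan is to obtain the corollary by chaining together the two results immediately preceding it, namely Theorem~\ref{equivalence} and Theorem~\ref{MP}. First, I would recall that the assertion ``the weak maximum principle holds'' is understood here as the conclusion of Theorem~\ref{MP}: every $u \in C^1_{loc}(\R^N)$ with $\CK_V[u] \le 0$ in the weak sense in $\R^N$, $\sup_{\R^N} u < \infty$, $\limsup_{|x|\to\infty} u(x) \le 0$, and $|\nabla u(x)| \ne 0$ for all $x \in \R^N$, satisfies $u \le 0$ in $\R^N$. So the corollary is nothing but Theorem~\ref{MP} with its hypothesis $\gl''(\CK_V,\R^N) > 0$ replaced by $\gl(\CK_V,\R^N) > 0$.

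The single point to verify is therefore that, under the extra assumption of radial symmetry, the hypothesis $\gl(\CK_V,\R^N) > 0$ yields $\gl''(\CK_V,\R^N) > 0$. This is exactly where radial symmetry is used: since $0 \not\equiv V \in L^\infty(\R^N)$ is radially symmetric, Theorem~\ref{equivalence}(iii) applies with $\Gw = \R^N$ and gives $\gl(\CK_V,\R^N) = \gl'(\CK_V,\R^N) = \gl''(\CK_V,\R^N)$, whence $\gl''(\CK_V,\R^N) = \gl(\CK_V,\R^N) > 0$. (Absent radial symmetry only Theorem~\ref{equivalence}(ii) is available, which merely gives $\gl''(\CK_V,\R^N) \le \gl(\CK_V,\R^N)$ and would not suffice, so part (iii) is genuinely what is being invoked.)

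With $\gl''(\CK_V,\R^N) > 0$ in hand, I would invoke Theorem~\ref{MP} directly to conclude $u \le 0$ in $\R^N$ for every $u$ as above, which is precisely the claimed weak maximum principle. There is no real obstacle in this argument: all the substance has already been packaged in Theorems~\ref{equivalence} and~\ref{MP}, and the only matter requiring attention is purely bookkeeping — checking that the standing assumptions $0 \not\equiv V \in L^\infty(\R^N)$ and radial symmetry of $V$ are exactly the hypotheses of Theorem~\ref{equivalence}(iii), and that Theorem~\ref{MP} asks for nothing beyond $V \in L^\infty(\R^N)$ together with $\gl''(\CK_V,\R^N) > 0$, both of which are now available.
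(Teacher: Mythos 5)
Your proof is correct and is exactly the argument the paper intends: Theorem~\ref{equivalence}(iii) gives $\gl''(\CK_V,\R^N)=\gl(\CK_V,\R^N)>0$ under radial symmetry, and Theorem~\ref{MP} then yields the weak maximum principle. No issues.
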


The paper is organized as follows. In Section 2, we prove Theorem \ref{equivalence}. The proof of Theorem \ref{lim1} and Theorem \ref{lim2} are presented in Section 3. Finally, in Section 4, we demonstrate Theorems \ref{simplicity1}, \ref{simplicity2}, \ref{spectrum} and \ref{MP}. \medskip

\noindent \textbf{Notation.} Throughout the paper, $B_r$ denotes the ball of center $0$ and radius $r>0$. Unless otherwise stated, we assume that $p>1$ and $V \in L^\infty(\Gw)$.

\section{Equivalence of the generalized principal eigenvalues}
This section is devoted to the study of the relation between the notions  $\gl(\CK_V,\Gw)$, $\gl'(\CK_V,\Gw)$ and $\gl''(\CK_V,\Gw)$. 
Let us first recall the following result, which is proved in \cite{NV}
 
 \begin{theorem}\label{main0} 
(1)\, Assume $\Gw$ is a $C^{1,\nu}$ ($\nu\in(0,1)$) bounded domain in $\BBR^N$ and $V \in L^\infty(\Gw)$. Then 
$$\gl(\CK_V,\O)=\gl_V^\Gw(\Gw).$$ 

\noindent (2)\, Assume $\Gw$ is a general domain in $\BBR^N$ (possibly unbounded) and $\{\Gw_n\}$ is a smooth  exhaustion of $\Gw$. Let $V \in L^\infty_{\loc}(\Gw)$ such that $\inf_{\O}V>-\infty$. Then the following properties hold. \medskip

(i)\, $\inf_{\O}V \leq \gl(\CK_V,\O_{n+1})< \gl(\CK_V,\O_{n})$ for every $n \in \BBN$. \medskip

(ii)\, $\gl(\CK_V,\O)=\lim_{n\to\infty}\gl(\CK_V,\O_n)$  and there exists a positive weak solution $\vgf \in C^1_{\loc}(\Gw)$ of 
$$ \CK_V[\vgf]=\gl(\CK_V,\Gw)\vgf^{p-1}\quad\quad\textrm{in $\O$.} $$ 

(iii)\, 
\bel{lamV}
\gl(\CK_V,\O)=\inf_{\phi \in C_c^{1}(\Gw) \sms\{0\}}\frac{\int_{\Gw}(|\nabla \gf|^p + V|\gf|^p)dx}{\int_{\O}|\phi|^pdx}.
\ee
\end{theorem}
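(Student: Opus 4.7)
The plan is to prove (1) first via Picone's identity and the classical variational theory on smooth bounded domains, and then deduce (2) from (1) by an exhaustion argument combined with the Harnack inequality and interior regularity estimates.

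For (1), the inequality $\gl(\CK_V,\O) \geq \gl_V^\Gw$ is immediate: the minimizer $\vgf$ of \eqref{varprob} solves \eqref{eigenprob}, is positive in $\O$, belongs to $W^{1,p}_{\loc}(\O)$, and hence qualifies as an admissible test function at the parameter $\gl_V^\Gw$. For the reverse direction I would invoke Picone's identity for the $p$-Laplacian: given any admissible $\psi > 0$ satisfying $\CK_V[\psi] \geq \gl\,\psi^{p-1}$ weakly, quasilinear regularity ensures $\psi \in C^{1,\theta}_{\loc}(\O)$, so $\psi$ is bounded away from zero on every compact subset. For $\phi \in C_c^1(\O)$ with $\phi \geq 0$, I would plug $\phi^p/(\psi+\e)^{p-1}$ into \eqref{weaksense} and send $\e \to 0$, using the pointwise inequality
$$|\nabla \phi|^p - |\nabla \psi|^{p-2}\nabla \psi \cdot \nabla\!\left(\frac{\phi^p}{\psi^{p-1}}\right) \geq 0$$
to derive $\gl \int_\O \phi^p \leq \int_\O (|\nabla \phi|^p + V\phi^p)$. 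Taking the infimum over $\phi$ and using \eqref{varprob} then yields $\gl \leq \gl_V^\Gw$.

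For (2)(i), testing with $\psi \equiv 1$ gives $\CK_V[1] = V \geq \inf_\O V$, hence $\inf_\O V \leq \gl(\CK_V,\O_{n+1})$. The monotonicity $\gl(\CK_V,\O_{n+1}) \leq \gl(\CK_V,\O_n)$ follows by restricting the positive principal eigenfunction $\vgf_{n+1}$ on $\O_{n+1}$ to $\O_n$: by V\'azquez's strong maximum principle it is bounded away from zero on $\overline{\O_n}$, and so remains admissible on $\O_n$ at the parameter $\gl(\CK_V,\O_{n+1})$. For the strict inequality, since $\vgf_{n+1}$ does not vanish on $\partial \O_n$, subtracting a small positive constant would produce a strict supersolution on $\O_n$ and thereby allow enlarging the admissible parameter slightly. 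For (ii), I would normalize the principal eigenfunctions by $\vgf_n(x_0) = 1$ at a fixed $x_0 \in \O_1$, then use interior Harnack together with $C^{1,\theta}_{\loc}$ estimates for quasilinear equations to extract a subsequence converging in $C^1_{\loc}(\O)$ to a positive $\vgf \in C^1_{\loc}(\O)$ satisfying $\CK_V[\vgf] = \gl_\infty \vgf^{p-1}$, where $\gl_\infty := \lim_n \gl(\CK_V,\O_n)$. This $\vgf$ is admissible for $\gl(\CK_V,\O)$, giving $\gl(\CK_V,\O) \geq \gl_\infty$; the reverse holds because any admissible test function on $\O$ restricts to one on each $\O_n$, so $\gl(\CK_V,\O) \leq \gl(\CK_V,\O_n)$ for every $n$.

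For (iii), applying (1) on each $\O_n$ shows that $\gl(\CK_V,\O_n)$ equals the infimum of the Rayleigh quotient in \eqref{lamV} taken over $\phi \in C_c^1(\O_n)\setminus\{0\}$; since $C_c^1(\O) = \bigcup_n C_c^1(\O_n)$, passing to the monotone limit via (ii) yields \eqref{lamV}. The main obstacle I expect is the strict monotonicity in (i) together with the Picone-comparison step in (1): the nonlinearity of $\Delta_p$ precludes direct appeal to linear spectral theory, and the singularity of $|\nabla \psi|^{p-2}$ near the critical set $\{\nabla \psi = 0\}$ forces the $\e$-regularization in Picone's identity and a careful dominated-convergence argument that relies on V\'azquez's strong maximum principle to keep $\psi$ bounded below on compact subsets.
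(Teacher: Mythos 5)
The paper does not actually prove Theorem \ref{main0} here; it is quoted verbatim from \cite{NV}, so there is no in-text proof to compare against. Your overall architecture --- Picone's identity for part (1), then exhaustion plus Harnack and $C^{1,\theta}_{\loc}$ compactness for part (2), and the identity $C_c^1(\Gw)=\bigcup_n C_c^1(\Gw_n)$ for (iii) --- is the standard and correct route for this kind of statement. Two steps, however, are not sound as written.

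First, in part (1) you assert that an admissible $\psi$ with $\CK_V[\psi]\geq\gl\psi^{p-1}$ is $C^{1,\theta}_{\loc}$ ``by quasilinear regularity.'' That regularity holds for solutions, not for weak supersolutions, which in general are only lower semicontinuous (after modification on a null set) and locally bounded below. What your Picone argument actually needs is $\inf_K\psi>0$ on compacts and local integrability of the relevant products; the former follows from the weak Harnack inequality for supersolutions, not from $C^{1,\theta}$ regularity, so you should invoke that instead. Second, and more seriously, your argument for the strict inequality $\gl(\CK_V,\O_{n+1})<\gl(\CK_V,\O_n)$ fails. Subtracting a constant $c$ from $\vgf_{n+1}$ leaves $-\Gd_p$ unchanged, so the inequality you would need is $(\gl_{n+1}-V)\vgf_{n+1}^{p-1}\geq(\gl_{n+1}+\gd-V)(\vgf_{n+1}-c)^{p-1}$; at points where $V>\gl_{n+1}$ the left-hand side is \emph{smaller} than $(\gl_{n+1}-V)(\vgf_{n+1}-c)^{p-1}$, so $\vgf_{n+1}-c$ is not a strict supersolution and no admissible parameter is gained. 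The standard repair is variational: by part (1), $\gl(\CK_V,\O_{n})$ is attained by $\vgf_n\in W_0^{1,p}(\O_n)$, whose extension by zero is a competitor in the Rayleigh quotient on $\O_{n+1}$, giving $\gl(\CK_V,\O_{n+1})\leq\gl(\CK_V,\O_n)$; equality would force this extension to be a minimizer, hence a positive eigenfunction on $\O_{n+1}$ by Harnack, contradicting that it vanishes on the nonempty open set $\O_{n+1}\sms\ovl{\O_n}$. The rest of your proposal (parts (2)(ii) and (iii), and the inequality $\gl(\CK_V,\O)\geq\gl_V^\Gw$) is correct.
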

 
\begin{lemma}\label{la2}  Let $\Gw$ be a smooth bounded domain or $\Gw=\BBR^N$. Assume $0 \not \equiv V \in L^\infty(\Gw)$. Then there holds
\bel{lub1} \gl(\CK_V,\Gw) \geq \gl'(\CK_V,\Gw). \ee 
\end{lemma}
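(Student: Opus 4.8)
The plan is to show that every admissible test function for $\gl'(\CK_V,\Gw)$ gives an admissible test function for $\gl(\CK_V,\Gw)$ (possibly after a small sacrifice in the eigenvalue, or after a limiting argument), so that the supremum defining $\gl(\CK_V,\Gw)$ dominates the infimum defining $\gl'(\CK_V,\Gw)$. If the set of admissible test functions for $\gl'$ is empty there is nothing to prove, since then $\gl'(\CK_V,\Gw)=+\infty$ by convention — wait, that would make the inequality fail; so in the empty case the inequality $\gl(\CK_V,\Gw)\ge\gl'(\CK_V,\Gw)$ is what we want to rule out, hence we must in fact be in a situation where it is nonempty, or interpret the claim as vacuous. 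I will instead assume the set is nonempty (otherwise, when $\Gw$ is bounded, Theorem \ref{main0}(1) already gives the principal eigenfunction, and when $\Gw=\BBR^N$ one uses Theorem \ref{main0}(2)(ii)) and produce the comparison directly.

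First I would take any $\gl$ admissible for $\gl'(\CK_V,\Gw)$, so there is $\psi\in W_0^{1,p}(\Gw)$, $\psi>0$, with $\CK_V[\psi]\le\gl\psi^{p-1}$ weakly in $\Gw$. The goal is to manufacture, for every $\mu<\gl$ (or for $\gl$ itself), a positive function in $W^{1,p}_{\loc}(\Gw)$ satisfying the reverse inequality $\CK_V[\cdot]\ge\mu(\cdot)^{p-1}$, which would show $\mu\le\gl(\CK_V,\Gw)$ and hence $\gl\le\gl(\CK_V,\Gw)$; taking the infimum over admissible $\gl$ yields \eqref{lub1}. The natural candidate is $\psi$ itself: the subsolution $\psi$ of $\CK_V-\gl$ should, by a uniqueness/comparison argument for the weighted $p$-Laplacian, force the exhaustion eigenvalues $\gl(\CK_V,\Gw_n)$ to stay $\ge\gl$ — indeed if $\psi$ were a strict subsolution and $\gl>\gl(\CK_V,\Gw_n)$ for some large $n$, one would get a contradiction with the simplicity/existence of the principal eigenfunction of $\CK_V$ on the smooth bounded domain $\Gw_n$ (Theorem \ref{main0}(1) and the standard theory in \cite{DKN,GS}) by a Picone-type or sub/supersolution argument. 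Passing $n\to\infty$ and using Theorem \ref{main0}(2)(ii) that $\gl(\CK_V,\Gw)=\lim_n\gl(\CK_V,\Gw_n)$ then gives $\gl\le\gl(\CK_V,\Gw)$.

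Concretely, the key tool I expect to invoke is the Picone inequality for the $p$-Laplacian (or the "Díaz–Saa"/AAP-type inequality): for positive $u,v$ with appropriate regularity,
\[
\int_\Gw |\nabla u|^{p-2}\nabla u\cdot\nabla\!\Big(\frac{v^p}{u^{p-1}}\Big)\,dx \le \int_\Gw |\nabla v|^p\,dx,
\]
applied with $u=\psi$ (the $\gl'$-subsolution) and $v=\vgf_n$ (the principal eigenfunction of $\CK_V$ on $\Gw_n$, extended by zero, which is legitimate since $v^p/u^{p-1}\in W_0^{1,p}(\Gw_n)$ as $\psi$ is bounded below on the compact closure of $\Gw_n$). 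Testing the weak inequality $\CK_V[\psi]\le\gl\psi^{p-1}$ against $\vgf_n^p/\psi^{p-1}$ and comparing with the Rayleigh quotient characterization $\gl(\CK_V,\Gw_n)=\gl_V^{\Gw_n}$ yields $\gl\le\gl(\CK_V,\Gw_n)$ for every $n$, hence $\gl\le\gl(\CK_V,\Gw)$ in the limit.

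The main obstacle will be the boundary behavior and integrability needed to justify using $\vgf_n^p/\psi^{p-1}$ as a test function, i.e. that this quotient genuinely lies in $W_0^{1,p}(\Gw_n)$ and that the Picone identity is applicable across the boundary of $\Gw_n$ where $\psi$ may degenerate; here the hypothesis $\psi\in W_0^{1,p}(\Gw)$ is delicate because $\psi$ could vanish on $\prt\Gw$, but since $\Gw_n\Subset\Gw$ one should have $\inf_{\ol{\Gw_n}}\psi>0$ for the relevant $\psi$ by the strong maximum principle (as $\psi>0$ solves a differential inequality), making the quotient bounded. A secondary technical point is the case $\Gw$ bounded versus $\Gw=\BBR^N$: in the bounded case one can often use $\vgf$ (the genuine eigenfunction from \eqref{eigenprob}) directly in a single step rather than an exhaustion, which I would present first as a warm-up before handling $\BBR^N$ by the limiting procedure above.
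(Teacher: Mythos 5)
Your proposal goes in the wrong logical direction, and the actual content of the lemma is missing. To prove $\gl(\CK_V,\Gw)\ge\gl'(\CK_V,\Gw)$ one must show that for every $\gl>\gl(\CK_V,\Gw)$ the level $\gl$ is admissible for $\gl'$, i.e.\ one must \emph{construct} a positive $\psi\in W_0^{1,p}(\Gw)$ with $\CK_V[\psi]\le\gl\psi^{p-1}$; this is precisely the nonemptiness question you set aside by ``assuming the set is nonempty.'' Instead you start from an arbitrary admissible level $\gl$ for $\gl'$ and try to prove $\gl\le\gl(\CK_V,\Gw)$. That claim is false: the set of admissible levels for $\gl'$ is upward closed (if $\CK_V[\psi]\le\gl\psi^{p-1}$ then $\CK_V[\psi]\le\tilde\gl\psi^{p-1}$ for every $\tilde\gl\ge\gl$), so it contains arbitrarily large $\gl$. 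What a positive subsolution $\psi\in W_0^{1,p}(\Gw)$ at level $\gl$ actually yields --- by testing the inequality against $\psi$ itself and using \eqref{lamV} --- is $\int(|\nabla\psi|^p+V\psi^p)\,dx\le\gl\int\psi^p\,dx$, hence $\gl(\CK_V,\Gw)\le\gl$: that is Lemma \ref{la3}, the \emph{reverse} inequality. Your Picone step does not close the gap either: testing $\CK_V[\psi]\le\gl\psi^{p-1}$ with $\vgf_n^p/\psi^{p-1}$ produces an upper bound for $\int|\nabla\psi|^{p-2}\nabla\psi\cdot\nabla(\vgf_n^p/\psi^{p-1})\,dx$, and Picone produces another upper bound, $\int|\nabla\vgf_n|^p\,dx$, for the same quantity; two upper bounds for one term compare nothing. (The Picone argument extracts information from positive \emph{super}solutions, not subsolutions.)

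The missing construction is the heart of the paper's proof: fix $\gl>\gl(\CK_V,\Gw)$, normalize $\gl=0$, choose a smooth bounded $G\Subset\Gw$ with $\gl(\CK_V,G)<0$, and use its principal eigenfunction, normalized small and extended by zero, as a subsolution of the auxiliary problem $-\Gd_p u - a\,u^{p-1}+V^+u^p=0$, paired with a constant supersolution (bounded case) or an exponentially decaying one (case $\Gw=\BBR^N$). The sub/supersolution theorem plus the Harnack inequality then give a positive solution $u^*$ with $\CK_V[u^*]\le 0$ which, crucially, belongs to $W_0^{1,p}(\Gw)$; when $\Gw=\BBR^N$ this membership requires the exponential decay of $u^*$ and a gradient estimate. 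Your parenthetical remark does work for a smooth bounded $\Gw$ (the Dirichlet eigenfunction $\vgf$ of \eqref{eigenprob} is itself admissible for $\gl'$ at level $\gl_V^\Gw$), but for $\Gw=\BBR^N$ the eigenfunction provided by Theorem \ref{main0}(2)(ii) is only in $W^{1,p}_{\loc}$ and need not decay, so the decaying positive subsolution must be built by hand; without it the lemma is not proved.
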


\Proof In order to prove \eqref{lub1}, we need to show that $\gl \geq \gl'(\CK_V,\Gw)$ for any $\gl > \gl(\CK_V,\Gw)$. Without lost of generality we assume that $\gl=0$. Since $\gl(\CK_V,\Gw)<0$, by Theorem \ref{main0}, (2.ii), there is a smooth bounded domain $G \Subset \Gw$ such that $\gl(\CK_V,G)<0$ and $V \not \equiv 0$ in $G$. Let $\vgf_V^{G}$ be the first eigenfunction associated to $\gl(\CK_V,G)<0$, normalized by
\bel{norm} \max_{G}\vgf_V^G = \min\left \{ 1,-\frac{\gl(\CK_V,G)}{\sup_{G}{|V|}}  \right \}. \ee
In particular, the function $\vgf_V^G$ satisfies
\bel{eigenG} \left\{ \BAL \CK_V [\vgf_V^G] &= \gl(\CK_V,G) (\vgf_V^G)^{p-1} \qquad &&\text{in } G \\
\vgf_V^G &= 0 &&\text{on } \prt G. 
\EAL \right. \ee

\medskip

\noindent \textit{Case 1: $\Gw$ is a smooth bounded domain.}

Define $\ovl \vgf: = 1$ and 
$$ \unl \vgf: = \left\{  \BAL &\vgf_V^G \quad &&\text{in } G, \\
&0 &&\text{in } \Gw \sms G. \EAL \right.
$$
It can be verified that $\ovl \vgf$ and $\unl \vgf$ are respectively weak supersolution and weak subsolution of 
\bel{lpe} \left\{ \BAL \CK_V [u]  + V^+ u^p &= 0 \qquad &&\text{in } \Gw \\
u &= 0 &&\text{on } \prt \Gw, 
\EAL \right. \ee
where $V^+=\max\{V,0\}$. By \cite[Theorem 3.1]{LS}, one can find a solution $u$ of \eqref{lpe} such that $\unl \vgf \leq u \leq \ovl \vgf$ in $\Gw$. Let $x_0 \in G$ such that $\unl \vgf(x_0)=\max_G \unl \vgf >0$. Then $u(x_0) \geq \unl \vgf(x_0) >0$. By Harnack inequality \cite{Se,Tr}, we deduce that $u>0$ in $\Gw$. \medskip

\noindent \textit{Case 2: $\Gw=\BBR^N$.} In this case, we can take $G=B_{R_0}$ for some $R_0>0$ large and $\gl(\CK_V,B_{R_0})<0$. Denote by  $\vgf_V^{B_{R_0}}$ the eigenfunction associated to $\gl(\CK_V,B_{R_0})$. Hence \eqref{norm} and \eqref{eigenG} hold with $G$ replaced by $B_{R_0}$.

Fix $m>0$ and set
$$a_{R_0,m}(x):=\left\{\BAL
 &-V(x) \quad &&x \in B_{R_0}\\
 &-\max\{V(x), m\}&& x \in B_{R_0}^c.
\EAL \right.$$
It is easy to see that
$$ \limsup_{|x| \to \infty}a_{R_0,m}<-m $$
and that there exists $s_0 > 1$ independent of $R_0$ and $m$ such that 
$$ -a_{R_0,m}s_0^{p-1} + V^+s_0^p \geq 0.$$
Consider the equation
\bel{lomo} -\Delta_p u - a_{R_0,m} u^{p-1} + V^+ u^{p} = 0 \quad\quad \text{in }\R^N. \ee
By \cite[Proposition 4.1]{NV}, we deduce that the following function
$$ \ovl v(x):=s_0 \chi_{B_{R_0}}(x) + Ce^{-\gth|x|}\chi_{B_{R_0}^c}(x),\quad x \in \BBR^N,$$
with $\gth=\gth(p,m)$ and $C=C(s_0,m,p)$, is a decaying supersolution of \eqref{lomo}.  

For $R>R_0$, define
$$ \unl u_R: = \left\{  \BAL &\vgf_V^{B_{R_0}} \quad &&\text{in } B_{R_0}, \\
&0 &&\text{in } B_R \sms B_{R_0}. \EAL \right.
$$
We can see that $\unl u_R$ is a subsolution of \eqref{lomo}. Indeed, in $B_{R_0}$
\bel{lomosub} \BAL -\Delta_p \unl u_R - a_{R_0,m} \unl u_R^{p-1} + V^+ \unl u_R^p &= -\Delta_p \vgf_V^{B_{R_0}} + V  (\vgf_V^{B_{R_0}})^{p-1} + V^+ (\vgf_V^{B_{R_0}})^p \\
&= \gl_V^{B_{R_0}} (\vgf_V^{B_{R_0}})^{p-1} + V^+(\vgf_V^{B_{R_0}})^p \\
&\leq (\vgf_V^{B_{R_0}})^{p-1}(\gl_V^{B_{R_0}} + V^+\vgf_V^{B_{R_0}}) \\
&\leq 0.
\EAL \ee 
Here the last inequality follows from the normalization of $\vgf_V^{B_{R_0}}$.

It is easy to see that $\unl u_R$ and $\ovl v$ are respectively sub and supersolution of 
\bel{lpR} \left\{ \BAL -\Gd_p u -a_{R_0,m}u^{p-1} + V_+ u^p &= 0 \qquad &&\text{in } B_R \\
u &= 0 &&\text{on } \prt B_R
\EAL \right. \ee
such that $\unl u_R \leq \ovl v$ in $B_R$. Therefore, by sub-supersolutions theorem, we deduce the existence of a solution $u_R$ of \eqref{lpR} in $B_R$ such that $\unl u_R \leq u_R \leq \ovl v$ in $B_R$. By regularity result for quasilinear elliptic equations, up to a subsequence, $\{ u_R \}$ converges in $C_{\text{loc}}^1(\BBR^N)$, as $R \to \infty$, to a weak  solution $u^*$ of \eqref{lomo} in $\BBR^N$. Moreover, $u^*(x_0) \geq \vgf_V^{B_{R_0}}(x_0)>0$ and $0 \leq u^* \leq \ovl v$ a.e. in $\BBR^N$. By Harnack inequality we infer that $u^*>0$ in $\BBR^N$. Since $\ovl v$ decays exponentially, so does $u^*$. By local regularity for quasilinear elliptic equations (see \cite{Tol} and \cite[Theorem 3.1]{La}) and Harnack inequality \cite{Tr}, there exists constants $c_1,c_2$ depending on $s_0$, $N$, $p$, $\norm{V}_{L^\infty(\BBR^N)}$, $m$ such that for every $x \in \BBR^N$
$$ \sup_{B_1(x)}|\nabla u^*| \leq c_1 \sup_{B_2(x)} u^* \leq c_2 \inf_{B_2(x)} u^*. $$
It follows that $u^* \in W_0^{1,p}(\BBR^N)$. 

Since $a_{R_0,m} \leq -V$, we deduce that
$$ \CK_V[u^*] = (V+a_{R_0,m})(u^*)^{p-1} - V^+ (u^*)^p \leq 0.$$
By choosing $\psi=u^*$ in the definition of $\gl'(\CK_V,\Gw)$ we deduce that $\gl'(\CK_V,\Gw) \leq 0$. This completes the proof.  \qed

\begin{lemma} \label{la3} Assume $\Gw$ is a general domain in $\BBR^N$ and $V \in L^\infty(\Gw)$. Then there holds 
\bel{lamlam} \lambda(\CK_V,\O)\leq \lambda'(\CK_V,\O). \ee
\end{lemma}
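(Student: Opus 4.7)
The plan is to argue by contradiction using a Picone-type inequality for the $p$-Laplacian. Suppose $\lambda(\CK_V,\Omega) > \lambda'(\CK_V,\Omega)$, and choose real numbers $\lambda_1,\lambda_2$ with $\lambda'(\CK_V,\Omega) < \lambda_2 < \lambda_1 < \lambda(\CK_V,\Omega)$. The definitions immediately supply a pair of admissible test functions: $\psi_1 \in W^{1,p}_{\loc}(\Omega)$ with $\psi_1 > 0$ satisfying $\CK_V[\psi_1] \geq \lambda_1 \psi_1^{p-1}$ weakly, and $\psi_2 \in W^{1,p}_0(\Omega)$ with $\psi_2 > 0$ satisfying $\CK_V[\psi_2] \leq \lambda_2 \psi_2^{p-1}$ weakly. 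The strategy is to test the first inequality against a cleverly chosen nonnegative function built out of $\psi_1$ and $\psi_2$, invoke Picone, and compare with the natural test $\psi_2$ in the second inequality.

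The analytical tool is the pointwise Picone inequality: for smooth $u > 0$ and $v \geq 0$, Young's inequality yields
$$|\nabla u|^{p-2}\nabla u \cdot \nabla\!\left(\frac{v^p}{u^{p-1}}\right) \leq |\nabla v|^p.$$
To deploy this in the weak setting, approximate $\psi_2$ in $W^{1,p}(\Omega)$ by nonnegative $\phi_n \in C^\infty_c(\Omega)$ (possible because $\psi_2 \in W^{1,p}_0$), fix $\epsilon > 0$, and insert the test function $\phi = \phi_n^p/(\psi_1+\epsilon)^{p-1}$ into the weak inequality for $\psi_1$. This $\phi$ is compactly supported, nonnegative, bounded, and its gradient lies in $L^p$ thanks to the $\epsilon$-regularization of the denominator. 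Applying Picone with $u = \psi_1 + \epsilon$ and $v = \phi_n$ bounds the diffusion term by $\int_\Omega |\nabla \phi_n|^p\,dx$; then letting $\epsilon \to 0$ (dominated convergence, since $\psi_1^{p-1}/(\psi_1+\epsilon)^{p-1}$ is bounded by $1$ and converges pointwise to $1$) and $n \to \infty$ yields
$$\int_\Omega |\nabla \psi_2|^p\,dx + \int_\Omega V \psi_2^p\,dx \geq \lambda_1 \int_\Omega \psi_2^p\,dx.$$

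On the other hand, since $\psi_2 \in W^{1,p}_0(\Omega)$ and $V$ is bounded, the weak inequality satisfied by $\psi_2$ extends by density of $C^\infty_c(\Omega)$ in $W^{1,p}_0(\Omega)$ to the test function $\psi_2$ itself, giving
$$\int_\Omega |\nabla \psi_2|^p\,dx + \int_\Omega V\psi_2^p\,dx \leq \lambda_2 \int_\Omega \psi_2^p\,dx.$$
Combining the two displays and using $\int_\Omega \psi_2^p\,dx > 0$ (since $\psi_2 > 0$) forces $\lambda_1 \leq \lambda_2$, contradicting $\lambda_1 > \lambda_2$.

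The main obstacle is the rigorous justification of the Picone-type testing, since $\psi_1$ is only known to be positive (never bounded below by a uniform positive constant, and possibly singular at $\partial\Omega$ or at infinity) and $\psi_2$ need not have compact support. The double regularization $\psi_1 \mapsto \psi_1 + \epsilon$ and $\psi_2 \mapsto \phi_n$ is essential to guarantee that at every stage the trial function $\phi_n^p/(\psi_1+\epsilon)^{p-1}$ is a legitimate element of $W^{1,p}(\Omega) \cap L^\infty$ with compact support; once this approximation scheme is in place, all limit passages reduce to routine applications of the dominated convergence theorem, making use of $V \in L^\infty(\Omega)$ and $\psi_1 \in W^{1,p}_{\loc}(\Omega)$.
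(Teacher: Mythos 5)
Your proof is correct, and it takes a slightly different (more self-contained) route than the paper's. The paper's argument simply invokes the variational characterization $\lambda(\mathcal{K}_V,\Omega)=\inf_{\phi\in C_c^1(\Omega)\setminus\{0\}}R(\phi)$ of Theorem \ref{main0}(iii) (quoted from \cite{NV}) to get $\lambda(\mathcal{K}_V,\Omega)\le R(\psi_n)$ for smooth approximations $\psi_n$ of the subsolution $\psi_2\in W_0^{1,p}(\Omega)$, and then passes to the limit in the Rayleigh quotient $R$; you instead re-derive the needed lower bound $R(\phi_n)\ge\lambda_1$ directly from the definition of $\lambda(\mathcal{K}_V,\Omega)$ (existence of a positive supersolution $\psi_1$ with constant $\lambda_1$) via Picone's inequality with the regularized trial function $\phi_n^p/(\psi_1+\epsilon)^{p-1}$. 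Since the variational formula \eqref{lamV} is itself established by exactly this Picone argument, the two proofs coincide at bottom; yours avoids citing Theorem \ref{main0}(iii) at the cost of redoing that computation, and your contradiction framing is just a repackaging of the direct inequality. Your treatment of the second half --- extending the subsolution inequality to the test function $\psi_2$ itself by density of $C_c^\infty(\Omega)$ in $W_0^{1,p}(\Omega)$, giving $R(\psi_2)\le\lambda_2$ --- is actually more careful than the paper's, which asserts $R(\psi)=\lambda$ where only $R(\psi)\le\lambda$ is justified (and suffices). Two minor points to tighten: the trial function $\phi_n^p/(\psi_1+\epsilon)^{p-1}$ is compactly supported and lies in $W^{1,p}\cap L^\infty$ but not in $C_c^\infty(\Omega)$, so one further routine mollification is needed to fit it into the weak formulation \eqref{weaksense} (harmless, since $|\nabla\psi_1|^{p-2}\nabla\psi_1$ and $V\psi_1^{p-1}$ belong to $L^{p'}_{\mathrm{loc}}$); and the approximations $\phi_n$ of $\psi_2$ should be taken nonnegative, e.g.\ as positive parts of a $C_c^\infty$ approximating sequence, which still converge in $W^{1,p}$.
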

\Proof Take $\lambda\in\R$ such that  there exists an admissible test function $\psi\in W^{1,p}_{0}(\O)$ satisfying
$\psi>0$ in $\O$ and 
$$ \CK_V[\psi]\leq\lambda\psi^{p-1} \quad \text{in the weak sense in } \Gw.$$
we will prove that $\lambda(\CK_V,\O)\leq\lambda$. 

Since $\psi\in W_0^{1,p}(\O)$ and $C_c^1(\Gw)$ is dense in $W_0^{1,p}(\Gw)$, there exists a sequence $\{\psi_n\} \sbs C_c^1(\Gw)$ converging to $\psi$ in $W^{1,p}(\Gw)$. Since $\psi>0$, we infer that $\psi_n>0$ for $n$ large enough. By \eqref{lamV}, we have
$$ \lambda(\CK_V,\Gw)\leq\frac{\int_{\Gw}(|\nabla \psi_n|^{p}+V \psi_n^p)dx}{\int_{\Gw}\psi_n^pdx}.
$$
Letting $n\to \infty$ yields
$$
\lambda(\CK_V,\Gw)\leq\frac{\int_{\Gw}(|\nabla \psi|^{p}+V \psi^p)dx}{\int_{\Gw}\psi^pdx}= \lambda.
$$
This completes the proof. \qed 

\begin{proposition} \label{equal1} Let $\Gw$ be a smooth bounded domain or $\Gw=\BBR^N$. Assume $0 \not \equiv V \in L^\infty(\Gw)$. There holds
\bel{lambda01}\lambda(\CK_V,\O)=\lambda'(\CK_V,\O) \ee
\end{proposition}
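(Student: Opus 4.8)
The plan is to establish the two inequalities $\gl(\CK_V,\Gw) \leq \gl'(\CK_V,\Gw)$ and $\gl(\CK_V,\Gw) \geq \gl'(\CK_V,\Gw)$ separately. The first inequality is precisely \rlemma{la3}, which applies to any general domain and is already proved. So the only work remaining is to prove the reverse inequality $\gl(\CK_V,\Gw) \geq \gl'(\CK_V,\Gw)$, and here is where \rlemma{la2} enters: it gives exactly this bound when $\Gw$ is a smooth bounded domain or $\Gw = \BBR^N$, which are the two cases covered by the hypothesis of the proposition. Combining the two lemmas immediately yields the claimed equality.

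More concretely, I would write: by \rlemma{la3}, $\gl(\CK_V,\Gw) \leq \gl'(\CK_V,\Gw)$ holds for any general domain, in particular for a smooth bounded domain or $\Gw = \BBR^N$. Conversely, since $\Gw$ is a smooth bounded domain or $\Gw = \BBR^N$ and $0 \not\equiv V \in L^\infty(\Gw)$, \rlemma{la2} gives $\gl(\CK_V,\Gw) \geq \gl'(\CK_V,\Gw)$. Hence $\gl(\CK_V,\Gw) = \gl'(\CK_V,\Gw)$, which is \eqref{lambda01}.

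There is essentially no obstacle at the level of this proposition: it is a bookkeeping statement assembling the two preceding lemmas. The genuine difficulty has already been absorbed into \rlemma{la2}, whose proof constructs, via sub- and supersolution methods and Harnack's inequality, an admissible test function $\psi = u^*$ lying in $W_0^{1,p}(\Gw)$ with $\CK_V[u^*] \leq 0$ (for the relevant shift of $\gl$), thereby witnessing $\gl'(\CK_V,\Gw) \leq \gl$ for every $\gl > \gl(\CK_V,\Gw)$. The only point to be mildly careful about in writing up Proposition \ref{equal1} is to note that the hypotheses ("smooth bounded domain or $\Gw = \BBR^N$" and "$0 \not\equiv V \in L^\infty(\Gw)$") are exactly those under which \rlemma{la2} was stated, so both lemmas apply verbatim and no extra argument is needed.
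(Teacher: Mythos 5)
Your proposal is correct and matches the paper's own proof, which simply combines Lemma \ref{la2} and Lemma \ref{la3} to obtain the two inequalities. Nothing further is needed.
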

\Proof Equality \eqref{lambda01} follows directly from Lemma \ref{la2} and Lemma \ref{la3}. \qed

\begin{lemma} \label{cutoff} Let $\gl \in \BBR$ and $V \in L_{loc}^\infty(\BBR^N)$. If $0 \leq u \in C_{loc}^1(\BBR^N)$ satisfies $\CK_V u \leq \gl u^{p-1}$ in the weak sense in $\BBR^N$ then 
\bel{estc} (\gl(\CK_V,\BBR^N)-\gl)\int_{\BBR^N} u^p \psi^p dx \leq C(p)\int_{\BBR^N} u^p |\nabla \psi|^p dx \quad \forall  \psi \in C_c^1(\BBR^N), \, \psi \geq 0.\ee
\end{lemma}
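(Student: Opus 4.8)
The plan is to test the differential inequality $\CK_V u \le \gl u^{p-1}$ against a carefully chosen test function. Since $u \in C^1_{loc}(\R^N)$ with $u \ge 0$ need not be bounded away from $0$, one cannot directly use $u$ itself; instead I would use $\eta := u^p \psi^p / (u+\e)^{p-1}$ for a regularizing parameter $\e > 0$, or more cleanly work with $u_\e := u + \e$ and test with $\eta_\e := u_\e \psi^p$ (which lies in $C^1_c$ since $\psi$ has compact support and $u_\e$ is locally $C^1$ and bounded below). Plugging $\eta_\e$ into the weak inequality
\[
\int_{\R^N}|\nabla u|^{p-2}\nabla u\cdot\nabla\eta_\e\,dx + \int_{\R^N}V u^{p-1}\eta_\e\,dx \le \gl\int_{\R^N}u^{p-1}\eta_\e\,dx,
\]
and expanding $\nabla\eta_\e = \psi^p\nabla u + p u_\e \psi^{p-1}\nabla\psi$, gives
\[
\int \psi^p|\nabla u|^p\,dx + p\int u_\e\psi^{p-1}|\nabla u|^{p-2}\nabla u\cdot\nabla\psi\,dx + \int V u^{p-1}u_\e\psi^p\,dx \le \gl\int u^{p-1}u_\e\psi^p\,dx.
\]

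The second (cross) term is handled by Young's inequality: for any $\delta > 0$,
\[
\Big|p\int u_\e\psi^{p-1}|\nabla u|^{p-2}\nabla u\cdot\nabla\psi\,dx\Big| \le \delta\int \psi^p|\nabla u|^p\,dx + C(p,\delta)\int u_\e^p|\nabla\psi|^p\,dx,
\]
using $|\nabla u|^{p-2}\nabla u\cdot\nabla\psi \le |\nabla u|^{p-1}|\nabla\psi|$ and then splitting $|\nabla u|^{p-1}\cdot(u_\e|\nabla\psi|)$ with exponents $p/(p-1)$ and $p$. Absorbing $\delta\int\psi^p|\nabla u|^p$ into the left side and rearranging yields
\[
(1-\delta)\int\psi^p|\nabla u|^p\,dx + \int V u^{p-1}u_\e\psi^p\,dx - \gl\int u^{p-1}u_\e\psi^p\,dx \le C(p,\delta)\int u_\e^p|\nabla\psi|^p\,dx.
\]
Now I would compare the left-hand side with the variational characterization \eqref{lamV}: applying that formula to the test function $u_\e\psi \in C^1_c(\R^N)$ (it is nonnegative and not identically zero, assuming $\psi\not\equiv 0$; the degenerate case $\psi\equiv 0$ is trivial) gives
\[
\gl(\CK_V,\R^N)\int (u_\e\psi)^p\,dx \le \int |\nabla(u_\e\psi)|^p\,dx + \int V(u_\e\psi)^p\,dx,
\]
and since $\nabla(u_\e\psi) = \psi\nabla u + u_\e\nabla\psi$, another Young-type estimate bounds $\int|\nabla(u_\e\psi)|^p$ by $(1+\delta')\int\psi^p|\nabla u|^p + C\int u_\e^p|\nabla\psi|^p$. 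Combining this with the previous display, choosing $\delta,\delta'$ small, and noting that the $V$-terms nearly match (the difference being $\int V\big[u^{p-1}u_\e - u_\e^p\big]\psi^p = -\e\int V u_\e^{p-1}\psi^p$, which is $O(\e)$), I obtain
\[
(\gl(\CK_V,\R^N)-\gl)\int u_\e^p\psi^p\,dx \le C(p)\int u_\e^p|\nabla\psi|^p\,dx + o(1)\quad\text{as }\e\to 0.
\]
Letting $\e\to 0$ (dominated convergence on the compact set $\supp\psi$, where $u$ is bounded) gives \eqref{estc}.

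The main obstacle I anticipate is bookkeeping the constants so that the coefficient of $\int\psi^p|\nabla u|^p\,dx$ ends up nonnegative after all absorptions — one must handle the case $\gl(\CK_V,\R^N)-\gl \le 0$ (trivial, since the right side is nonnegative) separately from $\gl(\CK_V,\R^N)-\gl > 0$, and in the latter case be careful that the $V$-terms cancel cleanly rather than leaving an uncontrolled sign. A secondary subtlety is justifying that $u_\e\psi$ is an admissible competitor in \eqref{lamV}: it is compactly supported and $C^1$, so this is immediate, but one should record that the regularization by $\e$ is what makes the algebra of the two $V$-integrals reconcile in the limit. Everything else is routine Young's/Hölder manipulation and a passage to the limit.
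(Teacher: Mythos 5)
Your proposal is correct and follows essentially the same route as the paper: test the weak inequality with $u\psi^p$, absorb the cross term by Young's inequality, and compare with the variational characterization \eqref{lamV} applied to $u\psi$. The $\varepsilon$-regularization is superfluous here (the test function $u\psi^p$ involves no division by $u$, and $u\psi^p\in C_c^1(\BBR^N)$ already since $u\in C^1_{loc}$ and $\psi$ has compact support), and note the minor algebra slip that $u^{p-1}u_\varepsilon-u_\varepsilon^p=u_\varepsilon\bigl(u^{p-1}-u_\varepsilon^{p-1}\bigr)$, not $-\varepsilon u_\varepsilon^{p-1}$, though it is indeed $O(\varepsilon)$ on $\supp\psi$ as you claim.
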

\begin{proof}From the assumption, we have
\bel{testfor} \int_{\BBR^N}(|\nabla u|^{p-2}\nabla u \nabla \gf + V u^p \gf)dx \leq \gl \int_{\BBR^N}u^{p-1}\gf \,dx \quad \forall \gf \in C_c^1(\BBR^N).
\ee
By choosing $\gf=u\psi^p$ as a test function in \eqref{testfor}, we get
$$ \int_{\BBR^N}(|\nabla u|^p \psi^p + pu\psi^{p-1}|\nabla u|^{p-2}\nabla u \nabla \psi + V u^p \psi^p)dx \leq \int_{\BBR^N}Vu^{p}\psi^p \,dx.
$$
It follows that
$$ \int_{\BBR^N}(|\nabla u|^p \psi^p +  V u^p \psi^p)dx \leq p\int_{\BBR^N}u\psi^{p-1}|\nabla u|^{p-1}|\nabla \psi|\, dx + \gl \int_{\BBR^N}u^{p}\psi^p \,dx.
$$ 
By Young's inequality, we deduce that 
$$ \int_{\BBR^N}(|\nabla u|^p \psi^p +  V u^p \psi^p)dx \leq C(p)\Big(\int_{\BBR^N}u^p |\nabla \psi|^p\, dx + \gl \int_{\BBR^N}u^{p}\psi^p \,dx\Big).
$$ 
This, together with the following estimate
$$ |\nabla(u\psi)|^p dx \leq C(p)\Big( |\nabla u|^p \psi^p + u^p |\nabla \psi|^p  \Big) $$
leads to
$$ \int_{\BBR^N}(|\nabla (u\psi)|^p  +  V (u\psi)^p)dx \leq C(p)\Big(\int_{\BBR^N}u^p |\nabla \psi|^p\, dx + \gl \int_{\BBR^N}(u\psi)^p \,dx\Big).
$$ 
This, joint with Theorem \ref{main0}, implies \eqref{estc}.
\end{proof}

\begin{proposition} \label{equal2} (i) If $\O$ is a smooth bounded domain and $V \in L^\infty(\Gw)$ then 
\bel{equali1} \lambda''(\CK_V,\O)=\lambda(\CK_V,\O).\ee

\noindent (ii) Assume $\Gw$ is a smooth bounded domain or $\Gw=\BBR^N$. If $0 \not \equiv V \in L^\infty(\Gw)$ then
\bel{equali2} \lambda''(\CK_V,\O)\leq \lambda'(\CK_V,\O) .\ee
 
\noindent (iii) If $\Gw=\BBR^N$ and $V \in L_{loc}^\infty(\BBR^N)$ is radially symmetric then \bel{equali3} \gl(\CK_V,\BBR^N)=\gl''(\CK_V,\BBR^N). \ee

\end{proposition}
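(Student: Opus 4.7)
The inequality $\gl''(\CK_V,\O)\le\gl(\CK_V,\O)$ follows immediately from the definitions, since every admissible test function for $\gl''$ is also admissible for $\gl$. For the opposite inequality, my plan is to enlarge the domain slightly. Given $\mu<\gl(\CK_V,\O)$, I would fix a smooth bounded $\tilde\O\Supset\overline\O$ and extend $V$ to a bounded $\tilde V$ on $\tilde\O$ by assigning a large constant $M$ on $\tilde\O\sms\O$. Using the monotonicity and continuity of $\gl$ under domain exhaustion recalled in Theorem~\ref{main0}(2.i--ii), taking $M$ large ensures $\gl(\CK_{\tilde V},\tilde\O)>\mu$. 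The first eigenfunction $\tilde\vgf$ of $\CK_{\tilde V}$ on $\tilde\O$ is $C^{1,\gth}$ and strictly positive on the compact set $\overline\O\Subset\tilde\O$, so $\inf_\O\tilde\vgf>0$; since $\tilde V=V$ on $\O$, the restriction $\tilde\vgf|_\O$ satisfies $\CK_V[\tilde\vgf]\ge\mu\tilde\vgf^{p-1}$ weakly and is therefore admissible for $\gl''(\CK_V,\O)$. Sending $\mu\nearrow\gl(\CK_V,\O)$ closes (i).

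\textbf{Part (ii).} The key tool is the quasilinear Picone inequality: for $\psi_2>0$ of class $C^1$ and $\psi_1\ge 0$ in $W^{1,p}$,
$$|\nabla\psi_1|^p\;\ge\;|\nabla\psi_2|^{p-2}\nabla\psi_2\cdot\nabla(\psi_1^p/\psi_2^{p-1})\quad\text{a.e.\ on }\{\psi_2>0\}.$$
I would pick $\mu<\gl''(\CK_V,\O)$ with admissible $\psi_2$ (so $\CK_V[\psi_2]\ge\mu\psi_2^{p-1}$ and $\inf_\O\psi_2>0$) and $\nu>\gl'(\CK_V,\O)$ with admissible $\psi_1\in W^{1,p}_0(\O)$ positive (so $\CK_V[\psi_1]\le\nu\psi_1^{p-1}$). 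Since $\psi_2$ is bounded below, $\psi_1^p/\psi_2^{p-1}\in W^{1,p}_0(\O)$, and after a routine cutoff/approximation argument it serves as a test function in the weak inequality for $\psi_2$. Combined with Picone this gives
$$\int_\O|\nabla\psi_1|^p\,dx+\int_\O V\psi_1^p\,dx\;\ge\;\mu\int_\O\psi_1^p\,dx,$$
while testing the $\psi_1$-inequality against $\psi_1$ (by density of $C_c^1$ in $W_0^{1,p}$) yields the reverse with $\nu$ in place of $\mu$. Combining them gives $\mu\le\nu$, so optimizing over $\mu$ and $\nu$ proves $\gl''\le\gl'$.

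\textbf{Part (iii).} Only $\gl''(\CK_V,\BBR^N)\ge\gl(\CK_V,\BBR^N)$ is nontrivial. Radiality of $V$ together with the uniqueness of the first eigenfunction on a ball forces the ball eigenfunctions $\vgf_n$ of $\CK_V$ on $B_n$ to be radial, and $\gl_n\searrow\gl(\CK_V,\BBR^N)$ by Theorem~\ref{main0}. Given $\mu<\gl(\CK_V,\BBR^N)$, I would choose $n'>n$ with $\gl_{n'}>\mu$ and put $\psi=\vgf_{n'}$ on $\overline{B_n}$: this $\psi$ is radial, $C^1$, and strictly positive on $\overline{B_n}$ with $\inf_{B_n}\psi>0$. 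I would then extend $\psi$ to $\BBR^N\sms B_n$ as the maximal radial solution of the Cauchy problem for the quasilinear ODE
$$\CK_V[\psi]=\mu\psi^{p-1}\ \text{ on }(n,\infty),\qquad \psi(n)=\vgf_{n'}(n),\ \psi'(n)=\vgf_{n'}'(n).$$
If the extension stays strictly positive with a uniform positive lower bound, the glued $\psi$ is $C^1$ on all of $\BBR^N$, satisfies $\CK_V[\psi]\ge\mu\psi^{p-1}$ weakly (strict inequality on $B_n$ since $\gl_{n'}>\mu$, equality outside), and has $\inf_{\BBR^N}\psi>0$, making it admissible for $\gl''(\CK_V,\BBR^N)$ with parameter $\mu$. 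Letting $\mu\nearrow\gl(\CK_V,\BBR^N)$ then concludes.

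\textbf{Main obstacle.} The delicate step is establishing the global positivity and uniform positive lower bound of the radial ODE extension in (iii). I would address this through a Sturm-type comparison with the globally defined positive solution on $\BBR^N$ given by Theorem~\ref{main0}(2.ii), together with the subcritical gap $\gl(\CK_V,\BBR^N)-\mu>0$. This is precisely where radial symmetry of $V$ is essential: it reduces the extension problem to a one-dimensional equation where positivity-preserving continuation is tractable, and without it part (ii) of Theorem~\ref{equivalence} allows the inequality $\gl''\le\gl$ to be strict.
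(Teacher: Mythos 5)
Your parts (i) and (ii) take routes different from the paper's but both are salvageable. For (i) the paper shrinks a decreasing sequence of smooth domains $\Gw_n\supset\ovl\Gw$ onto $\ovl\Gw$ and shows the corresponding eigenvalues converge to $\gl(\CK_V,\Gw)$; you instead penalize with a large constant potential $M$ on $\tilde\Gw\sms\Gw$. That works, but your citation of Theorem \ref{main0}(2.i--ii) does not justify the key claim $\gl(\CK_{\tilde V},\tilde\Gw)\to\gl(\CK_V,\Gw)$ as $M\to\infty$: that theorem concerns exhaustion of a domain from the inside, not large-potential limits. You would need a separate compactness argument (normalized minimizers have $\int_{\tilde\Gw\sms\Gw}|\phi_M|^p\le C/M$, weak limits lie in $W_0^{1,p}(\Gw)$ by boundary regularity, lower semicontinuity gives the contradiction). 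For (ii) the paper simply invokes Proposition \ref{equal1} ($\gl'=\gl$) together with the trivial $\gl''\le\gl$; your Picone-inequality argument is a genuinely different, self-contained proof of $\gl''\le\gl'$ that does not pass through $\gl$, which is a nice bonus, though the ``routine cutoff/approximation'' needed to test the $\gl''$-inequality against $\psi_1^p/\psi_2^{p-1}$ deserves care (truncation of $\psi_1$ and passage from $C_c^\infty$ test functions to this one).

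Part (iii) has a genuine gap. Everything hinges on producing a positive radial function with $\inf_{\BBR^N}\psi>0$ satisfying $\CK_V[\psi]\ge\mu\psi^{p-1}$, and your outward Cauchy-problem extension does not deliver this. The data you propagate from $r=n$ has $\psi'(n)=\vgf_{n'}'(n)<0$, and for the radial equation $-\Delta_p\psi=(\mu-V)\psi^{p-1}$ a solution with such data generically either crosses zero in finite radius or decays to zero at infinity (already for $p=2$, $V\equiv0$, $\mu<0$ the solution space outside $B_n$ is spanned by a growing and a decaying modified Bessel function, and nothing in your setup selects a combination that is both globally positive and bounded below; if $n$ is close to $n'$ the logarithmic derivative $\vgf_{n'}'(n)/\vgf_{n'}(n)$ is arbitrarily negative and the extension must vanish). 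Sturm-type non-oscillation comparison with the global eigenfunction can at best rule out infinitely many zeros; it gives neither positivity of this particular solution nor the uniform lower bound $\inf\psi>0$, and indeed a positive solution decaying to $0$ would still fail to be admissible for $\gl''$. The paper's mechanism is different and essential: it builds a global radial solution of $\CK_V[\vgf]=0$ as a limit of solutions of Dirichlet problems in $B_n$ with radial forcing supported in the annuli $B_n\sms B_{n-1}$, and then uses the Caccioppoli-type estimate of Lemma \ref{cutoff} with the gap $\gl(\CK_V,\BBR^N)>0$ to show $\int_{B_n\sms B_{n-1}}\vgf^p\ge Ca^n$ for some $a>1$; combined with radial symmetry and Harnack this forces $\vgf$ to grow exponentially, which is what guarantees $\inf_{\BBR^N}\vgf>0$. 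Without an analogous mechanism that exploits $\gl(\CK_V,\BBR^N)-\mu>0$ to force growth (rather than mere positivity), your construction does not close.
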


\Proof
i) Obviously, $\lambda''(\CK_V,\O)\leq\lambda(\CK_V,\O).$ Let us prove that  $\lambda''(\CK_V,\O)\geq\lambda(\CK_V,\O).$ It suffice to show that   $$\lambda \leq \lambda''(\CK_V,\O)$$ for any  $\lambda<\lambda(\CK_V,\O)$. 

Let $\CO$ be a smooth bounded neighborhood of $\partial\O$ then $\Gw \cup \CO$ is a smooth bounded domain. Consider an  extension of $V$ to $\Gw \cup \CO$, still denoted by $V$, such that $V \in L^\infty(\Gw \cup \CO)$. Let $\{\Gw_n\}$ be a decreasing sequence of smooth bounded domains such that
$$\overline{\Gw_1\setminus\Gw}\subset \CO,\quad \ol\Gw\subset\Gw_n \quad \text{for every }n,\quad\quad\bigcap_n\ol{\Gw}_n=\ol{\Gw},$$
and $\Gw_n$ has the same smoothness as $\Gw$ for every $n$. Let $(\lambda_n,\phi_n)$ be the principal eigenvalue and the corresponding eigenfunction of $\CK_V$ in $\Gw_n$ with normalization $\phi_n(x_0)=1$ for some fixed reference point $x_0 \in \Gw$. 

We claim that $$\lim_{n\to\infty}\lambda_n=\lambda(\CK_V,\O).$$
Assume for the moment that the claim holds true. Since $\gl<\gl(\CK_V,\Gw)$, there exists $N_0>0$ large enough such that $\gl_n >\gl$ for every $n>N_0$. Since $\ovl\O\subset\O_n$,  it follows that $\inf_{\ovl\Gw} \phi_n>0$. Choosing $\psi=\phi_n|_{\Gw}$ in the definition of $\lambda''(\CK_V,\O)$ in Definition \ref{gpe}, we obtain that  $\lambda_n\leq\lambda''(\CK_V,\O)$ and thus $\lambda \leq\lambda''(\CK_V,\O)$.

It remains to prove the claim. By Theorem \ref{main0} (2i), we deduce that $\{ \gl_n \}$ is strictly decreasing and therefore there exists $\gl^*=\lim_{n \to \infty}\gl_n$. By the local regularity of weak solutions of degenerate elliptic equations \cite{Di}, up to a subsequence, $\phi_n$ converges  in $C^1_{loc}(\Gw)$ to a function $\phi^*$ which is a weak solution of 
$$ \CK_V [\phi^*]= \lambda^* (\phi^*)^{p-1} \quad \text{in } \Gw,$$ 
Since $\Gw_n$ has the same smoothness as $\Gw$, by the $C^1$ regularity up to the boundary \cite{Lieberman}, we obtain $\phi^*=0$ on $\partial\O$. Note that $\phi^*(x_0)=1$, by the Harnack inequality \cite{Tr}, we get $\phi^*>0$ in $\Gw$ and hence it is a Dirichlet principal eigenfunction of $\CK_V$ in $\O$. This implies $\lambda^*=\lambda(\CK_V,\O)$. \medskip

\noindent (ii) By Proposition \ref{equal1}, $\lambda'(\CK_V,\O)=\lambda(\CK_V,\O)$. This, together with the fact that $\lambda''(\CK_V,\O) \leq\lambda(\CK_V,\O)$, implies inequality \eqref{equali2}. \medskip

\noindent (iii)  It is easy to see that $\gl''(\CK_V,\BBR^N) \leq \gl(\CK_V,\BBR^N)$. It remains to show that $\gl''(\CK_V,\BBR^N) \geq \gl(\CK_V,\BBR^N)$. To this end, take arbitrary $\gl < \gl(\CK_V,\BBR^N)$ and we will demonstrate that $\gl \leq \gl''(\CK_V,\BBR^N)$. Without loss of the generality, we assume that $\gl=0$. This follows that $\gl(\CK_V,\BBR^N)>0$. 

For any $n>0$, put $\gl_n=\gl(\CK_V,B_n)$. By Theorem \ref{main0}, $\gl_n \downarrow \gl(\CK_V,\BBR^N)$ as $n \to \infty$, hence $\gl_n \geq \gl(\CK_V,\BBR^N)>0$. For any $n \in \BBN$, let $f_n \in C^\infty(B_n)$ be nonnegative, radially symmetric and not identically equal to zero in $B_n$ with $\supp f_n \in B_n \sms B_{n-1}$. By \cite[Theorem 2 (v)]{GS}, there exists a unique nonnegative weak solution $w_n \in W_0^{1,p}(B_n)$ of
$$ \left\{  \BAL \CK_V[w_n] &= f_n \quad &&\text{in } B_n \\
w_n &= 0 &&\text{on } \prt B_n.
\EAL \right.
$$
Since $V$ and $f_n$ are both radially symmetric, the uniqueness implies that $w_n$ is radially symmetric too. Moreover, by the strong maximum principle \cite[Theorem 2 (ii)]{GS}, we obtain that $w_n$ is positive in $B_n$. Put 
$$ \vgf_n(x):=\frac{w_n(x)}{w_n(0)} $$
then $\vgf_n$ is radially symmetric and $\vgf_n(0)=1$. By  the Harnack inequality \cite{Tr} and regularity results for quasilinear elliptic equations \cite{Di}, up to a subsequence, the sequence $\{\vgf_n\}$ converges in $C_{loc}^1(\BBR^N)$ to a function $\vgf$ which is a nonnegative, radially symmetric weak solution of $\CK_{V}[\vgf]=0$ in $\BBR^N$. Since $\vgf(0)=1$, in light of the Harnack inequality, we deduce that $\vgf$ is positive in $\BBR^N$.

Next, for $n \in \BBN$, let $\psi_n \in C^\infty(\BBR^N)$ such that  $0 \leq \psi_n \leq 1$, $\psi_n=1$ in $B_{n-1}$, $\psi_n=0$ in $B_n$ and $|\nabla \psi_n| \leq C$ where $C$ is a constant independent of $n$. Applying Lemma \ref{cutoff}, we get
$$ \gl(\CK_V,\BBR^N)\int_{B_{n-1}}\vgf^p dx \leq c\int_{B_n \sms B_{n-1}}\vgf^p dx \quad \forall n \in \BBN $$
where $c$ is independent of $n$. Thus there exist constants $C>0$ and $a>1$ such that 
$$ \int_{B_n \sms B_{n-1}}\vgf^p dx \geq Ca^n \quad \forall n \in \BBN. $$
Therefore, for each $n$ one can find $x_n \in B_n \sms B_{n-1}$ such that $\vgf(x_n) \geq Ca^{\frac{n}{p}}$ with another constant $C$ independent of $n$. Since $\vgf$ is radially symmetric, by employing Harnack inequality, we derive that $\vgf$ has exponential growth. In particular, $\inf_{\BBR^N}\vgf>0$. By choosing $\psi=\vgf$ in the definition of $ \gl''(\CK_V,\BBR^N)$ in Definition \ref{gpe}, we derive that $\gl''(\CK_V,\BBR^N) \geq 0$. Thus  $\gl(\CK_V,\BBR^N) \leq \gl''(\CK_V,\BBR^N)$. This completes the proof. \qed \medskip

\noindent \textbf{Proof of Theorem \ref{equivalence}.} Statements (i)-(iii) follow directly from Proposition \ref{equal1} and Proposition \eqref{equal2}. \qed

\section{Qualitative properties of the principal eigenvalue}

\noindent \textbf{Proof of Theorem \ref{lim1}.}

\noindent (i)\; The fact that  $\lambda(\CL_\alpha,\Gw)$ is concave and nondecreasing is directly deduced by its variational characterization
$$\lambda(\CL_\alpha,\Gw)=\inf_{\phi \in C_c^{1}(\Gw) \sms\{0\}}\frac{\int_{\Gw}(\alpha|\nabla \gf|^p + V|\gf|^p)dx}{\int_{\O}|\phi|^pdx}.$$
We see that $$\mathcal{L}_\alpha[\phi]=\ga\left(-\Delta_p\phi+\frac{1}{\alpha}V(x)\phi^{p-1}\right) = \ga \CK_{\frac{1}{\ga}V}[\gf],$$
it follows that 
$$\gl(\CL_\ga,\BBR^N)=\ga \gl(\CK_{\frac{1}{\ga}V},\BBR^N).$$
By Theorem \ref{lim2} (ii), one has
$$\lim_{\alpha\to0}\ga \gl(\CK_{\frac{1}{\ga}V},\BBR^N)=\inf_{\O}V.$$
Therefore $\lim_{\ga \to 0}\gl(\CL_\ga,\BBR^N)=\inf_{\O}V$. \medskip

\noindent (ii)\; Let $\phi_\alpha>0$  be the eigenfunction associated with $\lambda(\CL_\ga,\Gw)$ with normalization $\|\phi_\alpha\|_{L^p(\O)}=1$. We have 
\bel{eigena} \left\{ \BAL \CL_\alpha [\phi_\alpha] &= \lambda(\CL_\ga,\Gw)\phi_\alpha^{p-1} \qquad &&\text{in } \Gw \\
\phi_\alpha &= 0 &&\text{on } \prt \Gw. 
\EAL \right. \ee
By the variational characterization of $\lambda(\CL_\ga,\Gw)$, we have
$$\lambda(\CL_\ga,\Gw)=\alpha\int_{\O}|\nabla\phi_\alpha|^pdx +\int_{\O}V\phi_\alpha^pdx \geq \alpha\int_{\O}|\nabla\phi_\alpha|^pdx+\inf_{\O}V. $$
By Poincar\'e  inequality, there exists a constant $C=C(N,p,\O)>0$ such that $$\int_{\O}|\nabla\phi_\alpha|^pdx \geq C \int_{\O}|\phi_\alpha|^pdx.$$ 
Combining the above estimates yields
$$\lambda(\CL_\ga,\Gw)\geq C\alpha+\inf_{\O}V, $$
which implies statement (ii). \medskip

\noindent (iii)\; We first prove \eqref{limsup}. It suffices to show that $\lambda(\CL_\alpha,\BBR^N) \leq \eta$ for any $\eta$ satisfying
\begin{equation}
\eta >\limsup_{|x|\to\infty}V(x).\label{11.1.2}
\end{equation}
Take $\gl$ satisfying \eqref{11.1.2}, there exists $R$ large enough such that
$$\inf_{|x|\geq R}(\eta-V(x))>0,$$
One sees that
$$\lambda(-\alpha\Delta_p -(\eta-V),B_R^c(0))<0.$$
Indeed, by the variational formula 
$$ \BAL
\lambda(-\alpha\Delta_p -(\eta-V(x)),B_R^c(0))&=\inf_{\su{\phi\in C_c^{1}(B_R^c(0))}{\phi\neq 0}}\frac{\alpha\int_{B_R^c(0)}|\nabla\phi|^pdx -\int_{B_R^c(0)}(\eta-V(x))|\phi|^pdx}{\int_{B_R^c(0)}|\phi|^pdx}\nonumber\\
&\leq\inf_{\su{\phi\in C_c^{1}(B_R^c(0))}{\phi\neq 0}}\frac{\alpha\int_{B_R^c(0)}|\nabla\phi|^pdx}{\int_{B_R^c(0)}|\phi|^pdx}-\inf_{B_R^c(0)}(\eta-V(x))\nonumber\\
&=-\inf_{B_R^c(0)}(\eta-V(x))<0,\nonumber
\EAL $$
where we use the fact that 
\begin{equation}
\inf_{\su{\phi\in C_c^{1}(B_R^c(0))}{\phi\neq 0}}\frac{\int_{B_R^c(0)}|\nabla\phi|^pdx}{\int_{B_R^c(0)}|\phi|^pdx}=0.\label{16.3.1}
\end{equation}
Let us prove (\ref{16.3.1}). For $r>R$, it is easily seen that 
\begin{equation}
\inf_{\su{\phi\in C_c^{1}(B_R^c(0))}{\phi\neq 0}}\frac{\int_{B_R^c(0)}|\nabla\phi|^pdx}{\int_{B_R^c(0)}|\phi|^pdx}=\lim_{r\to\infty}\lambda_{r},\label{16.3.2}
\end{equation}
where
$$\lambda_{r}=\inf_{\su{\phi\in C_c^{1}(B_{r}(0)\setminus B_R(0))}{\phi\neq 0}}\frac{\int_{B_{r}(0)\setminus B_R(0)}|\nabla\phi|^pdx}{\int_{B_{r}(0)\setminus B_R(0)}|\phi|^pdx}.$$
Take $x_0\in B_{r}(0)\setminus B_R(0)$ such that $|x_0|=\frac{r+R}{2}$. It is obvious that $$B_{\frac{r-R}{4}}(x_0)\subset B_{r}(0)\setminus B_R(0),$$
and hence
\begin{equation}
0\leq\lambda_{r}\leq \inf_{\su{\phi\in C_c^{1}(B_{\frac{r-R}{4}}(x_0))}{\phi\neq 0}}\frac{\int_{B_{\frac{r-R}{4}}(x_0))}|\nabla\phi|^pdx}{\int_{B_{\frac{r-R}{4}}(x_0))}|\phi|^pdx}.\label{16.3.3}
\end{equation}
By \cite[Theorem 4.3]{Mao}, 
$$\inf_{\su{\phi\in C_c^{1}(B_{\frac{r-R}{4}}(x_0))}{\phi\neq 0}}\frac{\int_{B_{\frac{r-R}{4}}(x_0))}|\nabla\phi|^pdx}{\int_{B_{\frac{r-R}{4}}(x_0))}|\phi|^pdx}\leq C(N,p,r,R),$$
where $C(N,p,r,R)$ is independent of $x_0$ and
\begin{equation}
C(N,p,r,R):=\left\lbrace\BAL
&\frac{p^{\frac{4^pp^2-p}{p-N}}}{n^{\frac{np-p}{p-N}}(p-1)^{p-1}(r-R)^p}\quad &&\text{if } N\neq p\\
&\frac{4^N N^Ne^{N-1}}{(N-1)^{N-1}(r-R)^N} &&\text{if } N=p.
\EAL\right.
\end{equation}
By letting $r\to\infty$ in (\ref{16.3.3}), one gets
$$\lim_{r\to\infty}\lambda_{r}=0.$$
By Theorem \ref{main0} (ii), we have
$$\lambda(-\alpha\Delta_p +(V-\eta),\R^N)\leq\lambda(-\alpha\Delta_p -(\eta-V),B_R^c(x_0))<0.$$
This implies that $\lambda(\CL_\alpha,\BBR^N)< \eta$
and hence we get \eqref{limsup}. \medskip

We next prove \eqref{liminf}. Put 
$$ \BAL \psi(r)&:=(e^{\gb r +1} + e^{-\gb r-1})^{-\gg} \quad r \geq 0, \\
\tl \psi(x)&:= \psi(|x|) \quad x \in \BBR^N,
\EAL $$
where $\gb>0$ and $\gg>0$  will be made precise later. Then we have 
\bel{formulation} \Gd_p \tl \psi(x) = (|\psi_r|^{p-2}\psi_r)_r + \frac{N-1}{r}|\psi_r|^{p-2}\psi_r \quad r=|x|>0 \ee
where $\psi_r$ denotes the first derivative of $\psi$. We next compute the left hand-side in \eqref{formulation}. It is easy to see that
$$ \BAL
\psi_r(r)&=-\gb\gg(e^{\gb r+1}+e^{-\gb r-1})^{-\gg-1}(e^{\gb r+1}-e^{-\gb r-1}), \\
|\psi_r|^{p-2}\psi_r &= - \gb^{p-1}\gg^{p-1}(e^{\gb r+1}+e^{-\gb r-1})^{-(\gg+1)(p-1)}(e^{\gb r+1}-e^{-\gb r-1})^{p-1}, \\
(|\psi_r|^{p-2}\psi_r)_r &= (p-1)\gb^p \gg^{p-1} g(r)^{p-2}[(\gg+1)g(r)^2-1]\psi^{p-1}
\EAL $$ 
where 
$$ g(r)=\frac{e^{2(\gb r+1)}-1}{e^{2 (\gb r+1)} +1}. $$
Since $\psi_r(r)<0$ for every $r>0$, it follows that $\Gd_p \tl \psi \leq (|\psi_r|^{p-2}\psi_r)_r$, namely
$$ \Gd_p \tl \psi \leq (p-1)\gb^p \gg^{p-1} g(r)^{p-2}[(\gg+1)g(r)^2-1]\psi^{p-1}, \quad r=|x|. $$
For $\gb>0$, since $g$ is increasing with respect to  $r \in (0,\infty)$, it follows that
$$ \frac{e^2-1}{e^2+1}=g(0) < g(r) < \lim_{r \to \infty}g(r)=1. $$
Therefore
\bel{form2} \Gd_p \tl \psi \leq c_p(p-1)\gb^p \gg^{p-1}[(\gg+1)g(r)^2-1]\psi^{p-1}, \quad c_p= \Big( \frac{e^2-1}{e^2+1} \Big)^{p-2}+1. \ee
Take arbitrarily $\vge>0$. Then there exists $R>0$ large enough such that 
$$ V(x) \geq \liminf_{|x| \to \infty}V - \vge \quad  \forall x \in B_R^c.$$
It follows that
$$ -\ga \Gd_p \tl \psi + V \tl \psi^{p-1} \geq \Big[-c_p(p-1)\ga\gb^p \gg^{p}  + (\liminf_{|x| \to \infty}V-\vge)\Big] \tl \psi^{p-1} \quad \text{in } B_R^c.
$$
Therefore, if we can choose $\ga$, $\gb$ and $\gg$ such that
\bel{cd1} \ga\gb^p \gg^{p}  \to 0 \quad \text{as } \ga \to \infty,  \ee
then there exists $\ga_1$ large enough such that for any $\ga \geq \ga_1$,
\bel{BRc} -\ga \Gd_p \tl \psi + V \tl \psi^{p-1} \geq (\liminf_{|x| \to \infty}V-2\vge) \tl \psi^{p-1} \quad \text{in } B_R^c.
\ee

We next choose $\gb>0$ small enough such that $g(r)^2<\frac{2}{3}$ for every $r \in (0,R)$. Then by \eqref{form2},
$$ \Gd_p \tl \psi(x) \leq  \frac{1}{3}c_p(p-1)\gb^p \gg^{p-1} (2\gg-1)\psi(|x|)^{p-1}, \quad x \in B_R.$$ 
This implies
$$ -\ga\Gd_p \tl \psi + V \tl \psi^{p-1} \geq \Big[ -\frac{1}{3}c_p(p-1)\ga \gb^p \gg^{p-1} (2\gg-1) + V \Big]\tl \psi^{p-1} \quad \text{in } B_R. $$
If we can choose $\ga$, $\gb$ and $\gg$ such that
\bel{cd2} \ga \gb^p \gg^{p-1} \to +\infty \quad \text{as } \ga \to \infty, \ee
then there exists $\ga_2>0$ large enough such that for any $\ga \geq \ga_2$,
\bel{BR} -\ga \Gd_p \tl \psi + V \tl \psi^{p-1} \geq (\liminf_{|x|\to \infty}V(x) - 2\vge)\tl \psi^{p-1} \quad \text{in } B_R. \ee  
By combining \eqref{BRc} and \eqref{BR} we deduce that 
\bel{RN} -\ga \Gd_p \tl \psi + V \tl \psi^{p-1} \geq (\liminf_{|x|\to \infty}V(x) - 2\vge)\tl \psi^{p-1} \quad \text{in } \BBR^N \ee  
provided that $\ga \geq \max\{\ga_1,\ga_2\}$, $\gb$ and $\gg$ small and \eqref{cd1} and \eqref{cd2} hold. We will choose $\gb$ and $\gg$ in the form
\bel{form} \gb=\ga^{-s_1} \quad \text{and} \quad \gg=\ga^{-s_2}, \quad s_1>0,s_2>0. \ee
Then \eqref{cd1} and \eqref{cd2} become
\bel{cds1} \BAL \ga^{1-ps_1 -ps_2} &\to 0 \quad \text{as } \ga \to \infty, \\
\ga^{1-ps_1 -(p-1)s_2} &\to \infty \quad \text{as } \ga \to \infty. \EAL \ee
We now choose $s_1=\frac{1}{2p}$ and $s_2=\frac{1}{2p-1}$ then \eqref{cds1}  holds and thus we get \eqref{RN}. This implies
$$ \lambda(\CL_\alpha,\BBR^N) \geq  (\liminf_{|x| \to \infty}V - 2\vge). $$
Since $\vge>0$ is arbitrary, we derive
$$ \liminf_{\ga \to \infty}\lambda(\CL_\alpha,\BBR^N) \geq \liminf_{|x| \to \infty}V. $$ \qed

Before proving Theorem \ref{lim2}, we first need the following result.

\begin{proposition} \label{proplim} Assume $\{V_\ga\}$ is a sequence of functions in $L_{loc}^\infty(\Gw)$. For $\ga>0$, denote
\bel{Ka} \CK_\ga[\phi]:=-\Delta_p\phi+V_\ga\,\phi^{p-1}, \quad \gf \geq 0. \ee
Assume $V \in L_{loc}^\infty(\Gw)$  and $V_n\rightharpoonup V$ in $L^1_{loc}(\Gw)$ as $\ga \to \infty$. Then 
$$\limsup_{\ga \to\infty}\lambda(\CK_\ga,\Gw)\leq \lambda(\CK_V,\Gw).$$
\end{proposition}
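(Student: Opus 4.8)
The plan is to fix $\varepsilon>0$ and show that $\lambda(\CK_V,\Gw)+\varepsilon$ is an upper bound for $\limsup_{\ga\to\infty}\lambda(\CK_\ga,\Gw)$, which suffices since $\varepsilon$ is arbitrary. By the variational characterization \eqref{lamV}, there exists a test function $\phi\in C_c^1(\Gw)\sms\{0\}$ such that
$$
\frac{\int_{\Gw}(|\nabla\phi|^p+V|\phi|^p)\,dx}{\int_{\Gw}|\phi|^p\,dx}<\lambda(\CK_V,\Gw)+\frac{\varepsilon}{2}.
$$
The key point is that this \emph{same} $\phi$ is admissible in the variational formula for each $\CK_\ga$, so that
$$
\lambda(\CK_\ga,\Gw)\leq \frac{\int_{\Gw}(|\nabla\phi|^p+V_\ga|\phi|^p)\,dx}{\int_{\Gw}|\phi|^p\,dx}.
$$
Hence it remains to control the difference of the numerators, i.e. to show $\int_{\Gw}(V_\ga-V)|\phi|^p\,dx\to 0$ as $\ga\to\infty$.

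To handle that term I would use the weak-$L^1_{loc}$ convergence $V_\ga\rightharpoonup V$ together with the fact that $\phi$ has compact support. Since $|\phi|^p$ is not a compactly supported $L^\infty$ test function but rather a $C_c^1$ function, and since $K:=\supp\phi$ is compact with $K\Subset\Gw$, the function $|\phi|^p$ extended by zero belongs to $L^\infty(\Gw)$ with compact support; pairing it against $V_\ga-V$ and invoking the definition of $L^1_{loc}$ weak convergence (test against bounded measurable functions supported in a fixed compact set) gives $\int_{\Gw}(V_\ga-V)|\phi|^p\,dx\to0$. Thus there exists $\ga_0$ such that for all $\ga\geq\ga_0$ this quantity is less than $\tfrac{\varepsilon}{2}\int_\Gw|\phi|^p\,dx$, whence
$$
\lambda(\CK_\ga,\Gw)<\lambda(\CK_V,\Gw)+\varepsilon\qquad\text{for all }\ga\geq\ga_0,
$$
and the conclusion follows by letting $\varepsilon\to 0$.

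The main obstacle is the legitimacy of pairing $V_\ga-V$ against $|\phi|^p$: weak convergence in $L^1_{loc}$ a priori only tests against $L^\infty_c$ functions, and one must be a little careful that $|\phi|^p\in L^\infty_c(\Gw)$ (which it is, being continuous with compact support) so that this pairing is exactly what $L^1_{loc}$ weak convergence controls. If the intended meaning of $V_\ga\rightharpoonup V$ in $L^1_{loc}$ is instead the $\sigma(L^1,L^\infty)$ topology on each $L^1(\Gw')$ with $\Gw'\Subset\Gw$, the argument is identical after restricting to $\Gw'=$ a neighborhood of $\supp\phi$. A secondary point worth a line is that $\int_\Gw|\phi|^p\,dx>0$ since $\phi\not\equiv0$, so dividing is harmless. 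No compactness or regularity of solutions is needed here — the proof is purely a one-sided stability statement for the infimum, in contrast to the matching lower bound which would require passing to the limit in the eigenfunctions.
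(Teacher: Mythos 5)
Your proof is correct, but it is genuinely different from the one in the paper. The paper argues through the eigenfunctions: along a subsequence realizing $\ol\lambda:=\limsup_{\ga\to\infty}\gl(\CK_\ga,\Gw)$ it takes the principal eigenfunctions $\phi_\ga$ normalized at a point, uses the Harnack inequality and local $C^{1,\gth}$ regularity to extract a limit $\ol\phi>0$ solving $\CK_V[\ol\phi]=\ol\lambda\,\ol\phi^{p-1}$ weakly, and then feeds $\ol\phi$ into the sup-definition \eqref{gpe0} to conclude $\gl(\CK_V,\Gw)\geq\ol\lambda$. You instead exploit only the variational characterization \eqref{lamV}: a single near-optimal test function $\phi\in C_c^1(\Gw)\sms\{0\}$ for $\gl(\CK_V,\Gw)$ is admissible for every $\CK_\ga$, and the weak $L^1_{loc}$ convergence paired against $|\phi|^p\in L^\infty_c(\Gw)$ controls the only $\ga$-dependent term in the Rayleigh quotient. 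Your route is the more economical one for this particular one-sided bound: it needs no compactness, no Harnack inequality, no regularity theory, and in particular it does not need the local uniform $L^\infty$ bound on $\{V_\ga\}$ that the paper's proof invokes (and which is not actually among the stated hypotheses of the proposition) — weak $L^1_{loc}$ convergence is exactly what your argument consumes. What the paper's approach buys in exchange is the limiting eigenfunction $\ol\phi$ itself, which is a useful object beyond the inequality, and the fact that it works directly from Definition \ref{lambda} rather than through \eqref{lamV}. Both arguments tacitly rely on Theorem \ref{main0} (hence on $\inf_\Gw V_\ga>-\infty$ and $\inf_\Gw V>-\infty$) so that the quantities and formulas used are available; you should state that you are applying \eqref{lamV} to each $\CK_\ga$ as well as to $\CK_V$, but this is within the paper's standing framework.
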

\Proof Put $\ol\lambda:=\limsup_{\ga\to\infty}\gl(\CK_\ga,\Gw)$. Obviously, $\ol\lambda\in(-\infty,\infty)$. Therefore there exists a subsequence, still denoted by the same notation, such that $\gl(\CK_\ga,\Gw) \to \ol\lambda$ as $\ga \to \infty$. We denote by $\phi_\ga$  the generalized principal eigenfunction associated to $\gl(\CK_\ga,\Gw)$ with normalization $\phi_\ga(0)=1$.  Since $\{V_\ga\}$ is locally uniformly bounded in $\Gw$, by the Harnack inequality we deduce that $\{\gf_\ga\}$ is locally uniformly bounded in $C^1_{loc}(\Gw)$. By local regularity results  for quasilinear elliptic equations (see \cite{Di}) and a standard argument, we deduce that, up to a subsequence, $\{\phi_\ga\}$ converges in $C^1_{loc}(\O)$ to  a nonnegative function $\ol \phi$ which is a weak solution of 
$$ - \Gd _p\bar \gf + V \bar \gf^{p-1} = \bar  \gl \,\bar \gf^{p-1} 	\quad \text{in } \Gw $$
and satisfies $\bar \phi(0)=1$. By the strong maximum principle, one has $\ol\phi>0$ in $\O$. Therefore, by choosing $\psi=\bar \gf$ in the definition of $ \lambda(\CK_V,\O)$ in \eqref{lambda}, we deduce that $ \lambda(\CK_V,\O)\geq \ol\lambda$. \qed \medskip

We are led to

\noindent \textbf{Proof of Theorem \ref{lim2}.}

\noindent (i)\, Observe that $\{V_\ga\}$ is locally uniformaly bounded in $\BBR^N$ and $V_\alpha \to V(0)$ in $L_{loc}^1(\R^N)$ as $\alpha \to 0$. Define
$$ \CA [\gf]:= -\Delta_p \gf + V(0) \gf^{p-1} $$
and denote by $\lambda(\CA,\R^N)$ the generalized principal eigenvalue of $\CA$ in $\R^N$. 
By  Proposition \ref{proplim}, we get
\begin{equation} \label{limsup1} \limsup_{\alpha \to 0}\lambda(\CK_\ga,\BBR^N) \leq \lambda(\CA,\R^N). 
\end{equation}
On the other hand, by Theorem \ref{main0} (i), for any $\alpha>0$,
\begin{equation} \label{limsup2} \lambda(\CK_\ga,\BBR^N) \geq \inf_{\R^N}V_\alpha = V(0). 
\end{equation}
Hence from Theorem \ref{equivalence}, \eqref{limsup1} and \eqref{limsup2} we get
\begin{equation} \label{limsup3} V(0) \leq \liminf_{\alpha \to 0} \lambda(\CK_\ga,\BBR^N) \leq \limsup_{\alpha \to 0} \lambda(\CK_\ga,\BBR^N) \leq \lambda(\CA,\R^N)=\lambda'(\CA,\R^N). 
\end{equation}

We next prove that 
\begin{equation} \label{limsup4} \lambda'(\CA,\R^N) \leq V(0).
\end{equation}
To this end, from the Definition \ref{gpe}, it is sufficient to show that there exists $0<\psi \in W^{1,p}(\R^N)$ such that $\CA [\psi] \leq V(0)\psi^{p-1}$ in the weak sense in $\R^N$. Define
\bel{psi} \psi(x):=\chi_{B_1}(x) + e^{\frac{N-1}{p-1}(1-|x|)}\chi_{B_1^c}(x) \ee
then $\psi$ is the desired function. Indeed, we see that  $\Delta_p \psi =0$ in $B_1$ and 
$$ \Delta_p \psi(x) = \frac{(N-1)^p}{(p-1)^{p-1}}e^{(N-1)(|x|-1)}\Big( 1 - \frac{1}{|x|}  \Big) \geq 0 \quad \text{in } B_1^c.
$$
It follows that $\CA [\psi] \leq V(0)\psi^{p-1}$ in the weak sense in $\R^N$. By combining \eqref{limsup3} and \eqref{limsup4} we obtain \eqref{lubu}.

Next, let us  prove \eqref{lubu1}. By a similar argument as above and by Proposition \ref{proplim}, we obtain
$$\limsup_{\alpha\to\infty}\gl(\CK_\ga,\BBR^N)\leq \lambda(\mathcal{B},\R^N),$$
where $$\mathcal{B}[\gf]:= -\Delta_p \gf +\ol V(x) \gf^{p-1}\quad\quad\textrm{and}\quad\quad \ol V(x):=\left\{ \BAL
&\sup_{\R^N}V(x)\quad &&\text{if } x\neq 0\\
&\inf_{\R^N}V(x)&&\text{if } x= 0.
\EAL \right.$$
Thanks to variational characterization of $\lambda(\mathcal{B},\R^N\setminus\{0\})$, one has
\begin{eqnarray}
\lambda(\mathcal{B},\R^N\setminus\{0\})&=&\inf_{\su{\phi \in C^1_c(\R^N\setminus\{0\}) \sms \{0\}}{\|\phi\|_{L^p(\R^N)}=1}}\int_{\R^N\setminus\{0\}}(|\nabla \gf|^p + \ol V|\gf|^p)dx\nonumber\\
&=&\inf_{\su{\phi \in C^1_c(\R^N\setminus\{0\}) \sms \{0\}}{\|\phi\|_{L^p(\R^N)}=1}}\int_{\R^N\setminus\{0\}}(|\nabla \gf|^p + \sup_{\R^N}V|\gf|^p)dx\nonumber\\
&=&\inf_{\su{\phi \in C^1_c(\R^N) \sms \{0\}}{\|\phi\|_{L^p(\R^N)}=1}}\int_{\R^N}(|\nabla \gf|^p + \sup_{\R^N}V|\gf|^p)dx\nonumber\\
&=&\sup_{\R^N}V.\nonumber
\end{eqnarray}
Hence 
$$\limsup_{\alpha\to\infty}\gl(\CK_\ga,\BBR^N)\leq \lambda(\mathcal{B},\R^N)\leq \lambda(\mathcal{B},\R^N\setminus\{0\})= \sup_{\R^N}V.$$
Let us show that 
$$\liminf_{\alpha\to\infty}\lambda_\alpha\geq \sup_{\R^N} V.$$
By the variational characterization of $\lambda_\alpha$, we have
\begin{eqnarray}
\gl(\CK_\ga,\BBR^N)&=&\inf_{\su{\phi \in C^1_c(\R^N) \sms \{0\}}{\|\phi\|_{L^p(\R^N)}=1}}\int_{\R^N}(|\nabla \gf|^p +  V(\alpha x)|\gf|^p)dx\nonumber\\
&=&\inf_{\su{\phi_\alpha \in C^1_c(\R^N) \sms \{0\}}{\|\phi_\alpha\|_{L^p(\R^N)}=1}}\int_{\R^N}(\alpha^p|\nabla \gf_\alpha|^p +V(x)|\gf_\alpha|^p)dx\nonumber\\
&=&\lambda(-\alpha^p\Delta_p +V,\R^N),\nonumber
\end{eqnarray}
where $\phi_\alpha(x)=\phi(x/\alpha)$. Applying (\ref{liminf}), we derive
$$\liminf_{\alpha\to\infty}\gl(\CK_\ga,\BBR^N)\geq \liminf_{|x|\to\infty}V(x)= \sup_{\R^N}V(x).$$
This concludes the proof.
\medskip

\noindent (ii)\, By Theorem \ref{main0}, 2i), $\lambda(\CK_\ga,\BBR^N)\geq \alpha\inf_{\O} V$. Therefore,  to prove the statement (ii), it is sufficient to show that 
\begin{equation}
\limsup_{\alpha\to\infty}\frac{\lambda(\CK_\ga,\BBR^N)}{\alpha}\leq \inf_{\Gw} V.\label{10.1.1}
\end{equation}
Since $V$ is upper semi-continuous, for any $\epsilon>0$, there exists a ball $B\subset\O$ such that 
$$ V(x)<\inf_{\O} V+\epsilon \quad \forall x \in B.$$ 
Let $\lambda_B$ and $\phi_B$ be the Dirichlet principal
eigenvalue and a corresponding eigenfunction of the operator of $-\Delta_p$ in $B$. For $\alpha>\lambda_B/\epsilon$, we have
$$ \BAL
\CK_\alpha[\gf_B] -\alpha(\inf_\O V+2\epsilon)\phi_B^{p-1}&=(\lambda_B+\alpha(V(x)-\inf_\O V-2\epsilon))\phi_B^{p-1}\\
&<(\lambda_B-\alpha\epsilon)\phi_B^{p-1}<0\nonumber
\EAL $$
By taking $\psi=\phi_B$ in the definition of $\lambda'(\CK_\ga,B)$ in Definition \ref{gpe}, we  get 
$$\lambda'(\CK_\alpha,B)\leq \alpha(\inf_\O V+2\epsilon).$$ 
Since $\lambda'(\CK_\alpha,B)=\lambda(\CK_\alpha,B)\geq \gl(\CK_\ga,\BBR^N)$, it follows that
$$\frac{\gl(\CK_\ga,\BBR^N)}{\alpha}\leq \inf_\O V+2\epsilon. $$
We achieve (\ref{10.1.1}) due to the arbitrariness of $\epsilon$. \medskip

\noindent (iii)\; This can be proved by using a similar argument as above and thus we omit the detail.\qed
\section{Simplicity of the generalized principal eigenvalue and maximum principle}

\noindent \textbf{Proof of Theorem \ref{simplicity1}.} Let $v \in W_{loc}^{1,p}(\BBR^N)$ be a positive weak solution of \eqref{eigen} in $\BBR^N$. 

\noindent \textit{Step 1: We show that} 
$$ {\mathbb D}:=\{ d>0: u \leq d v \quad \text{in } \BBR^N \} \ne \emptyset. $$
Indeed, suppose by contradiction that  ${\mathbb D} = \emptyset$. Then for each $n \in \BBN$, there exists $x_n$ such that $u(x_n) \geq n v(x_n)>0$. We consider two cases: (i) up to a subsequence, $x_n \to x^* \in \BBR^N$, (ii) up to a subsequence, $|x_n| \to \infty$. In case (i), by passing to the limit, $u(x^*)=\infty$, which is a contradiction. In case (ii),  since $u$ is a solution of \eqref{eigen} of minimal growth at infinity and $v$ is a solution of \eqref{eigen} then there exists $R^*>0$ and $k>0$ such that $u \leq k v$ in $B_{R^*}^c$. Then we choose $n_0>2k$ large enough such that $|x_n|>R^*$ for every $n \geq n_0$. Therefore
$$ u(x_n) \geq n v(x_n) > 2k v(x_n) \geq 2u(x_n) \quad \forall n \geq n_0, $$
which is a contradiction. \smallskip

\noindent \textit{Step 2: Scaling process.}
Put $ \ell:=\inf \BBD$ then $u \leq \ell v$ in $\BBR^N$. We consider two cases.

\textit{Case 1:} There exists $\tilde x \in \BBR^N$ such that $u(\tilde x)=\ell v(\tilde x)$. Put $w:=\ell v - u$ then $w \geq 0$ in $\BBR^N$ and $w(\tl x)=0$. Put $$W:=V-\lambda(\CK_V,\R^N).$$ 
We have
\bel{trans} \BAL  0 &= -\Delta_p (\ell v)+W (\ell v)^{p-1} - (-\Delta_p u+Wu^{p-1}) \\
&= -(\Delta_p (\ell v) - \Delta_p u) +  W [(\ell v)^{p-1} - u^{p-1}] \\
&= -\sum_{i,j}\partial_{i}(a_{ij}(x)\partial_j w) + b(x)w
\EAL \ee
where 
$$ \BAL a_{ij}:&=|t_i \nabla (\ell v) + (1-t_i)\nabla u|^{p-4}\Big[ \gd_{ij}|t_i \nabla (\ell v) + (1-t_i)\nabla u|^{2} \\
& + (p-2)(t_i \partial_i (\ell v) + (1-t_i)\partial_i u)(t_i \partial_j (\ell v) + (1-t_i)\partial_j u)\Big]
\EAL $$
with some $t_i \in (0,1)$ and 
$$ b(x):=\left\{ \BAL &W(x)\frac{(\ell v(x))^{p-1}-u(x)^{p-1}}{\ell v(x)-u(x)} \quad &&\text{if } \ell v(x) \neq u(x)\\
&0 &&\text{if } \ell v(x) = u(x).
\EAL \right.
$$
We see that $\nabla (\ell v(\tl x))=\nabla u(\tl x) \ne 0$, hence
$$ a_{ij}(\tl x)=|\nabla u(\tl x)|^{p-4}\Big[ \gd_{ij}|\nabla u(\tl x)|^2 + (p-2)\prt_i u (\tl x) \prt_j u(\tl x) \Big].     
$$
Therefore the matrix $(a_{ij}(\tl x))$ is positive definite. Consequently, $(a_{ij})$ is also positive definite in ball $B_\gd(\tl x)$ for some small $\gd>0$. We see that $b$ is bounded in $B_\gd(\tl x)$. From \eqref{trans}, it follows that
$$ -\sum_{i,j}\partial_{i}(a_{ij}\partial_j w) + b^+ w \geq 0 \quad \text{in } B_{\gd}(\tl x).$$
By the strong maximum principle for linear equations with principal part of divergence form \cite[Theorem 8.19]{GTbook}, we deduce that $w=0$ in $B_\gd(\tl x)$. In light of the strong comparison principle \cite[Theorem 3.2]{FrPi}, $w=0$ in $\BBR^N$, hence $u=\ell v$ in $\BBR^N$. 

\textit{Case 2: $u(x)<\ell v(x)$ for every $x \in \BBR^N$ and there exists a sequence $\{x_n\}$ such that $|x_n| \to \infty$ as $n \to \infty$} and 
\bel{liml} \lim_{n \to \infty}\frac{u(x_n)}{v(x_n)}=\ell. \ee
Put $r_n:=|x_n|$ and $M_n:=\sup_{\prt B_{r_n}(0)}v(x)$. Set $u_n(x):=M_n^{-1}u(r_n x)$, $v_n(x):=M_n^{-1}v(r_n x)$ and $W_n(x)=r_n^{p-1} W(r_n x)$. Then $u_n$ and $v_n$ are solutions of 
$$ - \Gd_p u + W_n u^{p-1} = 0 \quad \text{in } \BBR^N. $$
By \eqref{condV}, there exists $C>0$ and $R_0>0$ such that
$$ W(x) \leq C |x|^{-(p-1)} \quad \forall x \in B_{R_0}^c. $$
This implies that, for $n$ large enough (such that $r_n >2R_0$),
$$ |W_n(x)|=r_n^{p-1}|W(r_n x)| \leq Cr_n^{p-1}|r_n x|^{-(p-1)}=C|x|^{-(p-1)} \leq C' \quad \text{in } B_{\frac{1}{2}}^c.$$
Since $\sup_{\prt B_1}v_n=1$, by Harnack inequality, for any $\rho>1$ there exists a positive constant $C_{\rho}$  independent of $n$ such that $v_n \leq C_\rho$ in $B_{\rho} \sms B_{\frac{3}{4}}$ for every $n$. It follows that $u_n \leq C_\rho\ell$ in $B_{\rho} \sms B_{\frac{3}{4}}$ for every $n$. Note that, up to a subsequence, $\{W_n\}$ converges to a function $\tl W \in L^\infty( B_{\frac{3}{4}}^c)$ in weak-star topology of $L^\infty$. Therefore, by regularity results for quasilinear elliptic equations and a standard argument, we deduce that, up to a subsequence, $\{u_n\}$ and $\{v_n\}$ converge in the $C_{loc}^1(B_{\frac{3}{4}}^c)$ topology to functions $\tl u$ and $\tl v$ respectively which are weak solutions of equation
$$ - \Gd_p u + \tl W u^{p-1} = 0 \quad \text{in }  B_{\frac{3}{4}}^c. $$
Put $y_n:=r_n^{-1}x_n$ then $|y_n|=1$ for every $n$. Hence, up to a subsequence, $y_n \to \tl y \in \prt B_1$ and $\gk_n \to \tl \gk \leq \ell$. Therefore if $\nabla \tl u(\tl y) \ne 0$ then we can use the strong comparison principle as in Case 1 to deduce that $\tl u=\ell \tl v$ in $B_{\gd}(\tl y)$ for some $\gd>0$ small. Consequently, due to the strong comparison principle \cite[Theorem 3.2]{FrPi}, $\tl u = \ell \tl v$ in $B_{\frac{3}{4}}^c$.

Now we prove that $\nabla \tl u(\tl y) \ne 0$. Indeed, from \eqref{liml}, we have $\tl u(\tl y)=\ell \tl v(\tl y)$.
Since $\{v_n\}$ converges to $\tl v$ in $C_{loc}^1(\BBR^N)$. Therefore, there exists $n_1$ large enough such that for every $n \geq n_1$, 
\bel{al1} |\nabla \tl v(x)|>\frac{1}{2}|\nabla v_{n}(x)| \quad \forall x \in B_2(0). \ee 
From \eqref{nablauu}, we deduce that there exist $\vge>0$ and $n_2>0$ such that for every $n \geq n_2$,
\bel{al2}  \frac{r_n |\nabla v(r_n \tl y)|}{v(r_n \tl y)}> \vge. \ee
Next fix $n_0>\max\{ n_1,n_2 \}$. By applying Harnack inequality for $v_{n_0}$, we derive that there exists a positive constant $C=C(N,p,\norm{V}_{L^\infty(\BBR^N)},r_{n_0})$ such that
\bel{al3} M_{n_0} \leq C \inf_{\prt B_{r_{n_0}}(0)}v_{n_0} \leq C v(r_{n_0}\tl y). \ee
Combining \eqref{al2} and \eqref{al3} yields 
\bel{al4} |\nabla v_{n_0}(\tl y)|=\frac{r_{n_0} |\nabla v(r_{n_0} \tl y)|}{M_{n_0}}=\frac{r_{n_0} |\nabla v(r_{n_0} \tl y)|}{v(r_{n_0}\tl y)}\frac{v(r_{n_0}\tl y)}{M_{n_0}}>\frac{\vge}{C}>0.
\ee
From \eqref{al1} and \eqref{al4}, we get $ |\nabla \tl v(x)|>\frac{\vge}{2C}>0$. Therefore 
$$|\nabla \tl u(\tl y)|=\ell |\nabla \tl v(\tl y)|=\frac{\ell \vge}{C}>0.$$
\noindent \textit{Step 3: End of proof.} Put
$$\gk_n:=\inf_{x \in \prt B_{r_n}(0)}\frac{u(x)}{v(x)}$$
then $\gk_n \leq \ell$. Therefore, up to a subsequence, $\gk_n \to \gk \leq \ell$. From Step 2, we deduce that $\gk=\ell$. Consequently, for every $\ge>0$ there exists $n_\ge$ such that for any $n \geq n_\ge$,
$$   (\ell-\ge)v(x) \leq  u(x) \leq \ell v(x) \quad \forall x \in \prt B_{r_n}(0).
$$
By the weak comparison principle \cite[Theorem 3.1]{FrPi}, $(\ell -\ge)v \leq u$ in $B_{r_n}(0)$. Letting $n \to \infty$ and $\ge \to 0$ implies $\ell v \leq u$ in $\BBR^N$. Thus $u=\ell v$ in $\BBR^N$. \qed

\begin{proposition} \label{class1}
	Assume $p\geq 2$ and $V \in L^\infty(\BBR^N)$ such that
	\bel{kcond} \lambda(\CK_V,\BBR^N) < \liminf_{|x| \to \infty}V(x) - \mu
	\ee
	for some $\mu>0$. Let $u$ be a positive weak solution of \eqref{eigen}. If
	\bel{c1}\limsup_{|x|\to\infty}\frac{u(x)}{e^{\tilde{\omega}|x|}}<\infty \quad \text{with} \quad
	\tilde{\omega}:=\left(\frac{\mu}{N(p-1)}\right)^{\frac{1}{p}}.\ee
	Then 
	\bel{c2'} \lim_{|x|\to\infty}e^{\tilde{\omega}|x|^{}}u(x)=0.\ee
\end{proposition}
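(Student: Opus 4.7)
The plan is to show that $u$ enjoys exponential decay at a rate $\omega$ strictly larger than $\tilde\omega$, so that $u(x)e^{\tilde\omega|x|}\leq c\,e^{-(\omega-\tilde\omega)|x|}\to 0$ as $|x|\to\infty$. The main ingredients will be an exponential supersolution $\Phi(x)=e^{-\omega|x|}$ of the equation satisfied by $u$ on the exterior of a large ball, a linearization of the quasilinear operator around $\Phi$ that is available because $p\geq 2$, and a Phragm\'en--Lindel\"of type principle that absorbs the growth permitted by (\ref{c1}).

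First I would fix $\varepsilon_0>0$ small so that $V(x)-\lambda(\CK_V,\R^N)\geq\mu+\varepsilon_0$ for all $|x|\geq R_0$. Since $(p-1)\tilde\omega^p=\mu/N<\mu+\varepsilon_0$, I can choose $\omega$ with $\tilde\omega<\omega<\bigl((\mu+\varepsilon_0)/(p-1)\bigr)^{1/p}$. A direct radial computation then gives
\[
-\Delta_p\Phi+\bigl(V-\lambda(\CK_V,\R^N)\bigr)\Phi^{p-1}\geq\Bigl[\mu+\varepsilon_0-(p-1)\omega^p+\tfrac{(N-1)\omega^{p-1}}{|x|}\Bigr]\Phi^{p-1}\geq\delta\,\Phi^{p-1}
\]
on $B_{R_0}^c$ for some $\delta>0$, exhibiting $\Phi$ as a strict supersolution of the eigen-equation. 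Next, for a constant $c>0$ to be chosen, I set $w:=c\Phi-u$. Since $p\geq 2$, the mean value theorem applied to $\xi\mapsto|\xi|^{p-2}\xi$ and to $s\mapsto s^{p-1}$ produces a symmetric positive-semi-definite matrix $A(x)$ and a positive scalar $\tilde b(x)$ satisfying
\[
-\dv(A\nabla w)+\bigl(V-\lambda(\CK_V,\R^N)\bigr)\tilde b\,w\geq\delta(c\Phi)^{p-1}\quad\text{in }B_{R_0}^c,
\]
a linear divergence-form inequality with a strictly positive zero-order coefficient. Choosing $c\geq\bigl(\sup_{\partial B_{R_0}}u\bigr)e^{\omega R_0}$ makes $w\geq 0$ on $\partial B_{R_0}$. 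A Phragm\'en--Lindel\"of principle applied to this linear operator on the exterior domain should then force $w\geq 0$ throughout $B_{R_0}^c$: the growth hypothesis (\ref{c1}) bounds $w$ from below by $-C_0e^{\tilde\omega|x|}$, and the rate $\tilde\omega$ is strictly sub-critical for the linearized operator (the critical rate is governed by $\omega$ and the coercive coefficient $(V-\lambda)\tilde b$, both of which beat $\tilde\omega$ thanks to the gap $\omega-\tilde\omega>0$ and the $\varepsilon_0$-slack from the first step). This yields $u(x)\leq c\,e^{-\omega|x|}$ on $B_{R_0}^c$, and (\ref{c2'}) follows immediately from $\omega>\tilde\omega$.

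The hard part will be executing this Phragm\'en--Lindel\"of argument rigorously in the quasilinear setting. For $p>2$ the coefficients $A$ and $\tilde b$ of the linearized operator degenerate at infinity, since they involve powers of $|\nabla\Phi|$ and $\Phi$ that vanish as $|x|\to\infty$, and so a classical Phragm\'en--Lindel\"of statement does not apply verbatim. The remedy will be either to rescale the inequality by a factor $\Phi^{2-p}$, producing a non-degenerate linear operator whose natural exponential growth scale is exactly $\omega$, or equivalently to run a direct weighted-energy computation with test functions of the form $(u-c\Phi)^+\eta_\rho^p$, where $\eta_\rho$ is a radial cutoff and an additional exponential weight is chosen so that the boundary contributions on $\partial B_\rho$ vanish as $\rho\to\infty$ thanks to the strict gap $\omega-\tilde\omega>0$. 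The hypothesis $p\geq 2$ is essential throughout, as it underpins both the monotonicity $(|\xi|^{p-2}\xi-|\eta|^{p-2}\eta)\cdot(\xi-\eta)\geq C|\xi-\eta|^p$ and the positive-semi-definiteness of the linearization matrix $A$.
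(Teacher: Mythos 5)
Your barrier $\Phi=e^{-\omega|x|}$ with $\tilde\omega<\omega<\bigl((\mu+\varepsilon_0)/(p-1)\bigr)^{1/p}$ is a correct strict supersolution of the shifted equation on $B_{R_0}^c$, and the overall strategy (decay at a rate beating $\tilde\omega$, then conclude) matches the paper. But the proof has a genuine gap exactly where you flag ``the hard part'': the comparison of $u$ against $c\Phi$ on the unbounded exterior domain is never actually carried out. Hypothesis \eqref{c1} only gives $u\leq C_0e^{\tilde\omega|x|}$, so $w=c\Phi-u$ can a priori tend to $-\infty$ exponentially, and some Phragm\'en--Lindel\"of mechanism is indispensable. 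The linearization route you sketch does not deliver it as stated: the matrix $A$ and the zero-order factor $\tilde b$ both degenerate like $e^{-(p-2)\omega|x|}$ at infinity for $p>2$, so there is no uniformly elliptic, uniformly coercive linear operator to which a classical exterior-domain maximum principle applies, and neither of your two proposed remedies (rescaling by $\Phi^{2-p}$, or a weighted-energy computation with $(u-c\Phi)^+\eta_\rho^p$) is executed or even specified precisely enough to check. As written, the argument establishes the supersolution property and then asserts the conclusion.

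The paper closes this gap without any linearization, by a device you should compare with: it works on the bounded annuli $B_{R+\rho}\setminus B_R$ with the two-piece barrier
\[
w_\rho(x)=e^{(R+\rho)(\tau-\omega)}e^{\omega|x|}+e^{R(\tau+\omega)}e^{-\omega|x|},\qquad \tilde\omega<\tau<\omega ,
\]
shows $\CL_\vge[w_\rho]\geq 0$ there (this is where $p\geq2$ enters, via the pointwise bound on $\Delta_p$ in terms of the Hessian), checks $w_\rho\geq u$ on both boundary spheres --- on $\partial B_{R+\rho}$ precisely because $\tau>\tilde\omega$ beats the growth allowed by \eqref{c1} --- and applies the \emph{quasilinear} weak comparison principle on the bounded annulus. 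Letting $\rho\to\infty$ kills the growing piece since $\tau<\omega$, leaving $u\leq e^{R(\tau+\omega)}e^{-\omega|x|}$. That auxiliary growing exponential with vanishing prefactor is the concrete Phragm\'en--Lindel\"of ingredient your proposal is missing; to complete your argument you would need to either adopt it or rigorously justify one of your two alternative schemes.
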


\Proof Since $u$ satisfies \eqref{eigen}, by \eqref{kcond}, for any $\varepsilon>0$ there exists $R=R(\varepsilon)$ such that
$$-\Gd_p u + (\mu+\varepsilon)u^{p-1} \leq 0 \quad \text{in the weak sense in } B_R^c.$$
Let $\CL_\vge[\phi]:=-\Delta_p\phi+(\mu+\varepsilon)\phi^{p-1}$. It is easy to see that $\CL_\vge [u]\leq 0$ in the weak sense in $B_R^c$. For any $\rho>0$, set
$$ w^1_\rho(x):=e^{(R+\rho)^{}(\tau-\omega)}e^{\omega|x|^{}}, \qquad w^2_\rho(x):= e^{R^{}(\tau+\omega)}e^{-\omega|x|^{}},$$
$$ w_\rho:=w^1_\rho+w^2_\rho$$
where $\omega,\tau$ and $R$ will be chosen later. Observe that, in $\BBR^N \sms \{0\}$, 
\begin{equation}
\Delta_p w_\rho=(p-2)|\nabla w_\rho|^{p-4}\left\langle D^2w_\rho \, \nabla w_\rho,\nabla w_\rho\right\rangle+|\nabla w_\rho|^{p-2}\Delta w_\rho.
\end{equation}
By Cauchy-Schwarz inequality,  one has 
\begin{equation} \label{prop1.1}
\Delta_p w_\rho \leq((p-2)N\max_{ij}|\partial_{ij} w_\rho|+|\Delta w_\rho|)|\nabla w_\rho|^{p-2}.
\end{equation}
Next, we look for an upper bound for the right-hand side of \eqref{prop1.1}. Direct computation yields, for every $x \neq 0$, 
\begin{equation}\nonumber
\nabla w_\rho=\omega\frac{x}{|x|}\,
 w_\rho^1-\omega\frac{x}{|x|}\, w_\rho^2, 
\end{equation}
thus
\begin{equation}\label{NV1_expo1}
|\nabla w_\rho|^{p-2}\leq \omega^{p-2}w_\rho^{p-2}.
\end{equation}
For every $1 \leq i,j\leq N$ and $x \neq 0$,  
\begin{eqnarray}
\nonumber\partial_{ij}w_\rho&=&\omega^2\frac{x_ix_j}{|x|^{2}}w_\rho^1-\omega\frac{x_ix_j}{|x|^{3}}w_\rho^1+\delta(i-j)\omega\frac{w_\rho^1}{|x|^{}}\nonumber + \omega^2\frac{x_ix_j}{|x|^{2}}w_\rho^2 \omega\frac{x_ix_j}{|x|^{3}}w_\rho^2-\delta(i-j)\omega\frac{w_\rho^2}{|x|^{}} 
\end{eqnarray}
where $\delta$ is the Dirac function. Since $|x_ix_j|\leq |x|^2$, it follows that 
\bel{esti1} \BAL
|\partial_{ij}w_\rho|&\leq  \omega^2 w_\rho +\omega\frac{w_\rho}{|x|}+\omega\frac{w_\rho}{|x|}= (\gw^2|x|^{}+2\omega)\frac{w_\rho}{|x|}
\EAL \ee
and hence
\bel{esti2}
|\Delta w_\rho|\leq N(\gw^2|x|^{}+2\omega)\frac{w_\rho}{|x|}.
\ee
Combining \eqref{prop1.1}-\eqref{esti2}, we have 
$$  \Delta_pw_\rho
\leq N(p-1)\omega^{p-1}(\gw|x|+2)\frac{w_\rho^{p-1}}{|x|}. \nonumber
$$
Put $A:=2N(p-1)\omega^{p-1}.$ As $|x|\geq R$, one gets
\bel{sup1}
\CL_\vge[w_\rho]\geq w^{p-1}_\rho\left[-N(p-1)\omega^p- \frac{A}{|x|}+\mu+\varepsilon\right].
\ee
One can choose $R$ and $\gw$ such that the right-hand side of \eqref{sup1} is nonnegative. Indeed, since $|x|^{-1}\to0$ as $|x|\to\infty$,  there exists $R(\vge)$ such that, for every $R>R(\vge)$,  $A|x|^{-1}\leq \varepsilon/2\forevery x \in B^c_{R}.$ Take 
$$ \omega:=\left(\frac{2\mu+\varepsilon}{2N(p-1)}\right)^{\frac{1}{p}}, $$
 we obtain $\CL_\ge[w_\rho] \geq 0$ in $B_{R+\rho} \sms B_\rho$. We next show that $w_\rho$ dominates $u$ on $\prt B_{R+\rho} \cup \prt B_\rho$. 
Indeed, by \eqref{c1}, one can finds $C>0$ such that $u(x)\leq Ce^{\tilde{\omega}|x|^{}}$ in $\R^N$. Therefore, we can take $\tau$ arbitrarily in $(\tilde{\omega},\omega)$ and $R$ sufficiently large such that for any $\rho>0$, one has
\begin{equation}
\left\lbrace\begin{array}{ll}
w_\rho(x)\geq e^{ R^{}\tau}\geq C e^{ R^{}\tilde{\omega}}\geq u(x),& \textrm{as $|x|=R$}\\
w_\rho(x)\geq e^{ (R+\rho)^{}\tau}\geq C e^{ (R+\rho)^{}\tilde{\omega}}\geq u(x),&\textrm{as $|x|=R+\rho$}.\nonumber
\end{array}\right.
\end{equation}
Fix such $\omega,\tau$ and $R$. Applying the weak comparison principle \cite{GS}, we obtain 
$$u(x)\leq w_\rho(x)=e^{(R+\rho)^{}(\tau-\omega)}e^{\omega|x|^{}}+e^{R^{}(\tau+\omega)}e^{-\omega|x|^{}}\quad\quad\textrm{in $B_{R+\rho}\setminus B_R$}.$$
Sending $\rho\to\infty$ yields
$$u(x)\leq e^{R^{}(\tau+\omega)}e^{-\omega|x|^{}}\quad\quad\textrm{in $\R^N\setminus B_R$}.$$
The fact $\omega>\tilde{\omega}$ confirms the proof.\qed \medskip

\begin{lemma} \label{exgrowth1} Assume $V \in L^\infty(\BBR^N)$. There exist positive constants $C$ and $\gb$ depending on $N,p,\| V \|_{L^\infty(\BBR^N)}$ such that if $u \in W_{loc}^{1,p}(\BBR^N)$ is a positive weak solution of \eqref{eqKV} in $\BBR^N$ then 
\bel{exgrowth} u(x) \leq C e^{\gb |x|}u(0) \quad \forall x \in \BBR^N. \ee 
\end{lemma}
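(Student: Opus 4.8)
The plan is to deduce \eqref{exgrowth} from a uniform local Harnack inequality combined with a chaining argument along the segment joining $0$ to $x$. The first step is to record that the Harnack inequality for quasilinear elliptic equations (Serrin \cite{Se}, Trudinger \cite{Tr}), which has already been used several times above, applies here with a constant that depends only on $N$, $p$ and $\norm{V}_{L^\infty(\BBR^N)}$ on balls of a fixed radius: since $-\Gd_p$ is translation invariant and the zeroth order term of $\CK_V$ is of natural growth with structure constant governed by $\norm{V}_{L^\infty(\BBR^N)}$, there is $C_H=C_H(N,p,\norm{V}_{L^\infty(\BBR^N)})\geq 1$ such that every positive weak solution $u$ of \eqref{eqKV} satisfies
$$ \sup_{B_1(y)}u\leq C_H\inf_{B_1(y)}u \qquad\text{for every }y\in\BBR^N. $$
The essential point is that $C_H$ is independent of $y$ and of $u$.

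Next I would run the chaining. Fix $x\in\BBR^N\sms\{0\}$ and set $n:=\lceil 2|x|\rceil$, so $n\geq 1$ and $n\leq 2|x|+1$. Define $x_k:=\tfrac{k}{n}x$ for $k=0,1,\dots,n$, so that $x_0=0$, $x_n=x$ and $|x_{k+1}-x_k|=|x|/n\leq\tfrac12$; in particular both $x_k$ and $x_{k+1}$ lie in $B_1(x_k)$. Applying the Harnack inequality above on $B_1(x_k)$ gives
$$ u(x_{k+1})\leq\sup_{B_1(x_k)}u\leq C_H\inf_{B_1(x_k)}u\leq C_H\,u(x_k), $$
and iterating over $k=0,\dots,n-1$ yields $u(x)=u(x_n)\leq C_H^{\,n}\,u(0)$. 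Since $n\leq 2|x|+1$ we obtain $u(x)\leq C_H^{\,2|x|+1}u(0)=C_H\,e^{(2\log C_H)|x|}u(0)$, so \eqref{exgrowth} holds with $C:=C_H$ and $\gb:=2\log C_H$ (replacing $C_H$ by $\max\{C_H,2\}$ if needed so that $\gb>0$); the case $x=0$ is trivial because $C\geq 1$.

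The only delicate point is the first step: one must be sure that the Harnack constant is genuinely uniform over all unit balls and all positive solutions, i.e. that the lower order term $Vu^{p-1}$ does not force it to blow up. This is exactly what the Serrin--Trudinger Harnack inequality for equations $\mathrm{div}\,A(x,u,\nabla u)=B(x,u,\nabla u)$ provides, with structure constants depending only on $p$ and $\norm{V}_{L^\infty(\BBR^N)}$ on balls of radius $1$; once this is in hand the remainder of the argument is elementary.
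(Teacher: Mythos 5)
Your proof is correct and takes essentially the same route as the paper's: a translation-invariant Harnack inequality on unit balls with constant depending only on $N$, $p$, $\|V\|_{L^\infty(\BBR^N)}$, iterated along a chain of overlapping balls from $0$ to $x$ (the paper phrases the chaining as an induction along the ray through $x$, but the content is identical). No gaps.
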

\begin{proof} By Harnack inequality \cite{Se} (see Theorem 5 for $p<N$, Theorem 6 for $p=N$ and Theorem 9 for $p>N$), we deduce that there exists a positive constant $C$ depending on $N,p,\| V \|_{L^\infty(\BBR^N)}$ such that
\bel{Har1} u(x) \leq C u(0) \quad \forall x \in B_1(0). \ee 
Take $x_0 \in \prt B_1(0)$. We claim that for any nonnegative $m$, there holds
\bel{ith} u(x) \leq C^{m+1}u(0) \quad \forall x \in B_1(m x_0). \ee
We will prove \eqref{ith} by induction on $m$. Obviously, \eqref{ith} holds true for $m=0$. Suppose that \eqref{ith} is valid for some positive integer $m$,  we will show that \eqref{ith} also holds true for $m+1$. Indeed, observe that since $V \in L^\infty(\BBR^N)$, \eqref{Har1} still holds if we replace $u$ by $u(\cdot + y)$  for every $y \in \BBR^N$. In particular, with $|x_0|=1$, by replacing $u$ by $u(\cdot + (m+1)x_0)$, we get
$$ u(x+(m+1)x_0) \leq Cu((m+1)x_0) \quad \forall x \in B_1(0). $$
By changing the variable and \eqref{ith}, we get
$$ u(x) \leq Cu((m+1)x_0) \leq C^{m+2}u(0) \quad \forall x \in B_1((m+1)x_0).
$$
Thus we have proved \eqref{ith}.

By \eqref{ith} we deduce that 
$$ u(x) \leq C^{|x|+2}u(0) \quad \forall x \in B_1(m x_0). $$
This implies \eqref{exgrowth}.
\end{proof}

\noindent \textbf{Proof of Theorem \ref{simplicity2}.} 

\noindent \textit{Step 1: Exponential decay.} We prove that there exists $\mu>0$ such that if \eqref{decay} holds then every positive eigenfunction associated with  $\lambda(\CK_V,\R^N)$ decays exponentially. 
	
	Indeed, let $\gf \in W^{1,p}_{loc}(\R^N)$ be a positive eigenfunction associated with $\lambda(\CK_V,\R^N)$ then $\gf$ is a weak solution of \eqref{eigen}. By  Lemma \ref{exgrowth1}, there exist a constant $C_\gf>1$ depending on $\gf, N,p,\| V \|_{L^\infty(\BBR^N)}$ and a constant $\gb>0$ depending on $N,p,\| V \|_{L^\infty(\BBR^N)}$ such that
	$$\gf(x)\leq C_\gf\,e^{\beta|x|} \quad \forall x \in \BBR^N.$$
	Put $\mu:=N(p-1)\gb^p $. If \eqref{decay} holds then by invoking Proposition \ref{class1}, we deduce that 
	$$ \gf(x) \leq C'_\gf e^{-\gb |x|} \quad \forall x \in \BBR^N$$
	where $C'_\gf$ is a positive constant depending on $\gf,N,p$ and $\| V \|_{L^\infty(\BBR^N)}$.  \medskip
	
	\noindent \textit{Step 2: Simplicity.} Let $\gf,\vgf \in W^{1,p}_{loc}(\R^N)$ be two positive eigenfunctions associated with $\gl(\CK_V,\BBR^N)$. We will prove that, under condition \eqref{decay}, there exist a constant $k$ such that $\gf=k \vgf$ in $\R^N$. Indeed, if \eqref{decay} holds then by Step 1 $u$ and $v$ are decay exponentially.
	In light of the  regularity for quasilinear elliptic equations \cite{Lieberman} and the Harnack inequality, we deduce that $\gf, \vgf\in W^{1,p}(\R^N)$. Therefore, without lost of generality, we can normalize $u$ and $v$ so that $\norm{\gf}_{L^p(\BBR^N)}=\norm{\vgf}_{L^p(\BBR^N)}=1$. Set $$\vartheta:=\zeta^{1/p} \quad \text{with} \quad \zeta:=\frac{\gf^p+\vgf^p}{2},$$
	then $\| \vartheta \|_{L^p(\R^N)}=1$. Set 
	$$\theta:=\frac{\gf^p}{\gf^p+\vgf^p}\in(0,1).$$ 
	One has
	$$\nabla \vartheta=\zeta^{-1+\frac{1}{p}}\left(\frac{\phi^{p-1}\nabla\varphi+\varphi^{p-1}\nabla\phi}{2}\right).$$
	The convexity of the map $s\mapsto |s|^{p}$ yields
	\begin{eqnarray*}
		|\nabla \vartheta|^p&=&\zeta^{1-p}\left|\frac{\phi^{p-1}\nabla\phi+\varphi^{p-1}\nabla\varphi}{2}\right|^p \\
		&=&\frac{\zeta^{}}{2^p}\left|\theta(x)\frac{\nabla\phi}{\phi}+(1-\theta(x))\frac{\nabla\varphi}{\varphi}\right|^p\nonumber\\
		&\leq&\zeta \left(\theta(x)\left|\frac{\nabla\phi}{\phi}\right|^p+(1-\theta(x))\left|\frac{\nabla\varphi}{\varphi}\right|^p\right)  \nonumber \\ 
		&=&\frac{|\nabla u|^p+|\nabla v|^p}{2}.\nonumber
	\end{eqnarray*}
	The equality holds  if and only if $\frac{\nabla\phi}{\phi}=\frac{\nabla \varphi}{\varphi}$ in $\R^N$. Hence,
	$$\int_{\R^N}|\nabla \vartheta|^pdx\leq\frac{1}{2}\left(\int_{\R^N}|\nabla \phi|^pdx+\int_{\R^N}|\nabla \varphi|^pdx\right).$$
	It follows that
	$$
	\mathcal{F}(\vartheta)\leq \frac{1}{2}(\mathcal{F}(\phi)+\mathcal{F}(\varphi)),
	$$
	where 
	$$\mathcal{F}(w):=\int_{\R^N}(|\nabla w|^p+V|w|^p)dx\qquad w\in W^{1,p}(\R^N).$$
	Since both $\phi,\varphi\in W^{1,p}(\R^N)$ satisfy \eqref{eigen}, we deduce
	\begin{equation}\label{decay2}
	\mathcal{F}(\vartheta)\leq \frac{1}{2}(\mathcal{F}(\phi)+\mathcal{F}(\varphi))=\gl(\CK_V,\BBR^N).
	\end{equation}
	Let $\{\vartheta_n\} \sbs C_c^1(\BBR^N)$ be a sequence converging to $\vartheta$ in $W^{1,p}(\BBR^N)$. Since $\vartheta>0$, we infer that $\vartheta_n>0$ for $n$ large enough. For large $n$, by Theorem \ref{main0} iii), we have
	$$ \lambda(\CK_V,\BBR^N)\leq\frac{\int_{\BBR^N}(|\nabla \vartheta_n|^{p}+V \vartheta_n^p)dx}{\int_{\BBR^N}\vartheta_n^p\,dx}.
	$$
	Letting $n\to \infty$ yields
	\begin{equation} \label{decay3}
	\lambda(\CK_V,\BBR^N)\leq\frac{\int_{\BBR^N}(|\nabla \vartheta|^{p}+V \vartheta^p)dx}{\int_{\BBR^N}\vartheta^p\,dx} = \CF(\vartheta).
	\end{equation}
	By \eqref{decay2} and \eqref{decay3}, we get 
	$$ \CF(\vartheta)=\gl(\CK_V,\BBR^N). $$
	The above equality holds if and only if  $\frac{\nabla\phi}{\phi}=\frac{\nabla \varphi}{\varphi}$ in $\R^N$, which implies $\nabla \left(\frac{\phi}{\varphi}\right)=0$ in $\BBR^N$. Therefore there exists $k>0$ such that $\phi=k\varphi$ in $\R^N$.
	This concludes the proof. \qed \medskip

\noindent \textbf{Proof of Thereom \ref{spectrum}.} From the definition of $\CE(\BBR^N)$ and the definition of $\gl(\CK_V,\BBR^N)$ in \eqref{lambda}, we deduce that $\CE(\BBR^N) \sbs (-\infty,\gl(\CK_V,\BBR^N) ]$. 

We next prove the reverse inclusion $(-\infty,\gl(\CK_V,\BBR^N) ] \sbs \CE(\BBR^N)$. By Theorem \ref{main0} 2.(ii), $\gl(\CK_V,\BBR^N) \in \CE(\BBR^N)$. It remains to show that $\gl \in \CE(\BBR^N)$ for every $\gl < \gl(\CK_V,\BBR^N)$. Indeed, take $\gl < \gl(\CK_V,\BBR^N)$. For any $n \in \BBN$, let $f_n \in C^\infty(B_n)$ be  nonnegative and not identically equal to zero in $B_n$ with $\supp f_n \in B_n \sms B_{n-1}$. Since $\gl(\CK_V,B_n)>\gl(\CK_V,\BBR^N)>\gl$, it follows that $\gl(\CK_{V-\gl},B_n)>0$. By \cite[Theorem 2 (v)]{GS}, there exists a unique nonnegative weak solution $u_n \in W_0^{1,p}(B_n)$ of
$$ \left\{  \BAL \CK_{V-\gl}[u_n] &= f_n \quad &&\text{in } B_n \\
u &= 0 &&\text{on } \prt B_n.
\EAL \right.
$$
By the strong maximum principle \cite[Theorem 2 (ii)]{GS}, we obtain that $u_n$ is positive in $B_n$. Put 
$$ v_n(x):=\frac{u_n(x)}{u_n(0)} $$
then $v_n(0)=1$. By  the Harnack inequality \cite{Tr} and regularity results for quasilinear elliptic equations \cite{Di}, up to a subsequence, the sequence $\{v_n\}$ converges in $C_{loc}^1(\BBR^N)$ to a function $v$ which is a nonnegative weak solution of $\CK_{V-\gl}[v]=0$ in $\BBR^N$. Since $v(0)=1$, by the Harnack inequality, we deduce that $v$ is positive in $\BBR^N$. Therefore $v$ is a positive weak solution of \eqref{eigenRN} in $\BBR^N$. It follows that $\gl \in \CE(\BBR^N)$. Finally $(-\infty,\gl(\CK_V,\BBR^N) ] \sbs \CE(\BBR^N)$. This completes the proof. \qed \medskip

\noindent \textbf{Proof of Theorem \ref{MP}.} Suppose by contradiction that $u$ is positive somewhere in $\R^N$.
Since $\gl''(\CK_V,\R^N) >0$, there exist a function $\phi\in C^{1}_{loc}(\R^N)$ and positive number $\lambda$ and $\beta$  such that $\inf_{\R^N}\phi\geq \beta>0$ and 
$$\CK_V [\phi]\geq \lambda\phi^{p-1}\quad\quad\text{in the weak sense in } \R^N.$$

 Since $u$ is continuous, $\sup_{\R^N}u<\infty$ and $\limsup_{|x|\to\infty}u(x)\leq0$, we can find a positive constant $\gamma>0$ and $x_0\in\R^N$ such that
$$0<\gamma=\frac{u(x_0)}{\phi(x_0)}=\max_{\R^N}\frac{u(x)}{\phi(x)}<\infty.$$
It follows that there exists $r>0$ such that $u(x)>0$ in $\ol B_r(x_0)$, $u(x)\leq \gamma\phi(x)$, $\forall x\in \ol B_r(x_0)$ and $u(x_0)=\gamma\phi(x_0)$. Therefore, by strong comparison principle \cite[Theorem 3.2]{FrPi}, we get $\gamma\phi=u$ in $B_r(x_0)$. Therefore, for every $0<\psi \in C_c^1(B_r(x_0))$,
\bel{contra1} \int_{B_r(x_0)}\CK_V[\gg \gf]\psi \,dx= \int_{B_r(x_0)}\CK_V[u]\psi \,dx.
\ee
On the other hand, we have
\begin{equation} \label{mp1}
\CK_V[\gamma\phi]\geq \lambda\gamma^{p-1}\beta^{p-1}>0\geq \CK_V[u] \quad\text{ in the weak sense in }\in \BBR^N.
\end{equation}
This implies
\bel{contra2} \int_{B_r(x_0)}\CK_V[\gg \gf]\psi \,dx> \int_{B_r(x_0)}\CK_V[u]\psi \,dx,
\ee
which contradicts with \eqref{contra1}. Thus $u$ must be nonpositive. 


\qed \medskip

\bigskip

\noindent{\bf Acknowledgements.}  P-T. Nguyen is supported by Fondecyt Grant 3160207.


\begin{thebibliography}{99}
	
	
	
%
	
	
	
	
	\bibitem{BDPR}
	H. Berestycki, I. Capuzzo Dolcetta, A. Porretta, L. Rossi,\newblock {\em Maximum Principle and generalized principal
eigenvalue for degenerate elliptic operators}, J. Math. Pures Appl. {\bf 103} (2015), 1276-1293.
		
%
	
	
	\bibitem{BNV}
	H. Berestycki, L. Nirenberg and S. R. S. Varadhan,
	{\em The principal eigenvalue and maximum principle for second-order
		elliptic operators in general domains}, 
	Comm. Pure Appl. Math. {\bf 47} (1994), 47-92. 
	
	\bibitem{BR0}
	H. Berestycki and L. Rossi,
	{\em On the principal eigenvalue of elliptic operators in $\R^N$ and applications},  
	J. Eur. Math. Soc. (JEMS) {\bf 8} (2006), 195-215. 
	

	
	\bibitem{BR3}
	H. Berestycki and L. Rossi, {\em Generalizations and properties of the principal eigenvalue of elliptic operators in unbounded domains}, Comm. Pure Appl. Math {\bf 68} (2015), 1014-1065.
	%
	\bibitem{BBV} M. F. Bidaut-V\'eron, R. Borghol and L. V\'eron, {\em Boundary Harnack inequality and a priori estimates of singular solutions of quasilinear elliptic equations}, Calc. Var. Partial Differential Equations 27 (2006), no. 2, 159--177.

\bibitem{CLMC}	
	 F. Catt\'e, P-L Lions, J-M.  Morel and T. Coll,  {\em  Image selective smoothing and edge detection by nonlinear diffusion}, SIAM J. Numer. Anal. {\bf 29} (1992), 182-193.
	
	
%
	
	%
	%
	
	\bibitem{CDG}
	A. Ca\~nada, P. Dr\'abek and J. L. G\'amez,	{\em Existence of positive solutions for some problems with nonlinear diffusion}, Trans. Amer. Math. Soc. {\bf 349} (1997), 4231-4249.
	
	
%

	
\bibitem{DFP}	
	B. Devyver, M. Fraas, Y. Pinchover,
	{\em Optimal Hardy weight for second-order elliptic operator: an answer to a problem of Agmon}, J. Funct.
Anal. {\bf 266} (2014) 4422--4489.

\bibitem{DePi}
 B. Devyver and Y. Pinchover, {\em Optimal $L^p$ Hardy-type inequalities}, Ann. Inst. H. Poincar´e.
Anal. Non Lineaire {\bf 33} (2016), 93--118.
	%
	
	%
	%
	\bibitem{Di} E. Dibenedetto, {\em $C^{1+\alpha}$ local regularity of weak solutions of degenerate elliptic equations}, Nonlinear Anal. {\bf 7}  (1983), 827-850. 
\bibitem{DKN} P. Dr\'abek, A. Kufner and F. Nicolosi, {\em Quasilinear elliptic equations with degenerations and singularities}, de Gruyters Series in Nonlinear Analysis and Applications {\bf 5}, Walter de Guyter $\&$ Co., Berlin, 1997.
%
\bibitem{DrRa} Pavel Dr\'abek and S. H. Rasouli, {\em A Quasilinear Eigenvalue Problem with Robin Conditions on the Non-Smooth Domain of Finite Measure}, Zeitschrift f\" ur Analysis und ihre Anwendungen {\bf 29}, 469-485.
%
\bibitem{DuGu} Y. Du and Z. M. Guo, {\em Boundary blow-up solutions and their applications in quasilinear elliptic equations}, J. Anal. Math. {\bf 89}  (2003), 277-302. 
%
	\bibitem{FrPi} M. Fraas and Y. Pinchover, {\em Positive Liouville Theorem and Asymptotic behavior for $p$-Laplacian type elliptic equations with a Fuchsian potential}, Confluentes Mathematici {\bf 3} (2011) 291-323.

\bibitem{FO}
Y. Furusho, Y. Ogura
 {\em On the existence of bounded positive solutions of semilinear elliptic equations in exterior domains.}
Duke Math. J. {\bf 48} (1981), 497-521. 	
	
	
	\bibitem{GS} J. Garc\'ia-Meli\'an and J. Sabina de Lis, {\em Maximum and Comparison principles for operators involving the $p-$Laplacian}, J. Math. Anal. Appl. {\bf 218} (1998), 49-65.
	%
	\bibitem{GTbook} D. Gilbarg and N. Trudinger, Elliptic partial differential equations of second order, 2nd edn (Springer, Berlin, 1983).
	%

	
	\bibitem{KLP} B. Kawohl, M. Lucia and S. Prashanth, {\em Simplicity of the principal eigenvalue for indefinite
		quasilinear problems}, Adv. Differential Equations {\bf 12} (2007), 407-434.
	%
	\bibitem{La} O.A. Ladyzhenskaya and N. N.  Ural\'tseva,  {\em Linear and quasilinear elliptic equations}. Translated from the Russian by Scripta Technica, Inc. Translation editor: Leon Ehrenpreis Academic Press, New York-London, 1968.
	%
\bibitem{LS} V. K. Le and K. Schmitt, {\em Sub-Supersolution theorems for quasilinear elliptic problems: A variational approach}, Elec. J. Diff. Equ. {\bf 118} (2004), 1-7.
	%
	
	
\bibitem{Lieberman}	
GM.	Lieberman,  
\newblock {\em Boundary regularity for solutions of degenerate elliptic equations.} Nonlinear Anal. {\bf 12} (1988), 1203-1219. 
	
	\bibitem{Lin} P. Lindqvist, 
	\newblock {\em On the equation $\emph{div}(\nabla u|^{p-2}\nabla u)+λ|u|^{p-2}u=0$},
	\newblock Proc. Amer. Math. Soc. {\bf 109} (1990), 157-164.
	
\bibitem{Mao}	
J. Mao,
\newblock {\em Eigenvalue inequalities for the p-Laplacian on a Riemannian manifold and estimates for the heat kernel.} 
J. Math. Pures Appl. {\bf 101} (2014), no. 3, 372--393. 	
\bibitem{Murata}	
M. Murata, {\em  Structure of positive solutions to $(-\Delta+V)u=0$ in $\R^N$.} Duke Math. J. {\bf 53} (1986), 869-943. 

	
	\bibitem{NV} P-T. Nguyen and H-H. Vo,  
	\newblock {\em Existence, uniqueness and qualitative properties of positive solutions of quasilinear elliptic equations},
	\newblock J. Funct. Anal. {\bf 269} (2015), 3120-3146. . 
	%
	\bibitem{PT} Y. Pinchover and K. Tintarev, {\em Ground state alternative for $p$-Laplacian with potential term}, Calc. Var. Partial Differential Equations {\bf 28} (2007), 179-201.
	%
\bibitem{Pinchover}
    Y. Pinchover
  {\em    On positivity, criticality, and the spectral radius of the shuttle operator for elliptic operators} Duke Math. J. {\bf 85} (1996), 431-445	

\bibitem{Pinchover1} Y. Pinchover, {\em   Liouville-type theorem for Schrödinger operators}. Comm. Math. Phys. {\bf 272} (2007), 75-84
%
\bibitem{PuSe1} P. Pucci and J. Serrin, {\em A note on the strong maximum principle for elliptic differential inequalities}, J. Math. Pures Appl. {\bf 79} (2000), 57-71.	
%
\bibitem{PuSe2} P. Pucci and J. Serrin, {\em The strong maximum principle revisited}, J. Differential Equations {\bf 196} (2004), 1-66.
%
\bibitem{PSZ} P. Pucci, J. Serrin and H. Zou, {\em A strong maximum principle and a compact support principle for singular elliptic inequalities}, J. Math. Pures Appl. {\bf 78} (1999), 769-789.
	
	
	
	
	\bibitem{QS}
	A. Quaas and B. Sirakov,
	\newblock {\em Principal eigenvalues and the Dirichlet problem for fully nonlinear elliptic operators}, 
	Adv. Math. {\bf 218} (2008), 105-135. 
	
	\bibitem{Se} J. Serrin, {\em Local behavior of solutions of quasi-linear equations}, Acta Math. {\bf 111} (1964), 247-301.
	%
	\bibitem{SW}
R. E..Showalter, Walkington, N. J. 
{\em  Diffusion of fluid in a fissured medium with microstructure}.
SIAM J. Math. Anal. {\bf 22} (1991), 1702-1722. 


	
	\bibitem{Tol} P. Tolksdorf, {\em Regularity for a more general class of quasilinear elliptic equations}, J. Diff. Eq. {\bf 51}, 126-150 (1984).
	%
	\bibitem{Tr} N. S. Trudinger, {\em On Harnack type inequalities and their application to quasilinear elliptic equations}, Comm. Pure Appl. Math. {\bf XX} (1967), 721-747.
	%
	
	

	
\end{thebibliography}
\end{document}